\documentclass[a4paper,11pt]{article}

\usepackage[utf8]{inputenc}
\usepackage[T1]{fontenc}
\usepackage{lmodern}
\usepackage[ngerman,english]{babel}
\usepackage{amsmath, amsfonts, amssymb, amsthm, mathtools, dsfont, tikz, booktabs, mathrsfs, subcaption}
\usepackage{hyperref}
\usepackage[noabbrev]{cleveref}

\usetikzlibrary{arrows.meta,calc}

\usepackage[left=2.5cm,right=2.5cm,top=2.5cm,bottom=2.5cm]{geometry} 

\allowdisplaybreaks[2]

\makeatletter
\let\@fnsymbol\@alph
\makeatother

\definecolor{OSNAred}{HTML}{AD1034}
\definecolor{OSNAgold}{HTML}{FBB900}
\definecolor{OSNAgray}{HTML}{CFD0D1}
\definecolor{RUBblue}{HTML}{003560}  
\definecolor{RUBgreen}{HTML}{94C11C} 
\definecolor{RUBgray}{HTML}{E6E4E4}  

\theoremstyle{plain}

\newtheorem{thm}{Theorem}[section]

\newtheorem{lemma}[thm]{Lemma}

\newtheorem{proposition}[thm]{Proposition}

\newtheorem{rem}[thm]{Remark}

\crefname{thm}{theorem}{theorems}

\usepackage{enumitem}

\setlist[enumerate]{label=(\alph*)}

\newlist{maincaseswrap}{enumerate}{1}
\setlist[maincaseswrap]{%
	label=Case~\arabic*:,%
	ref=\arabic*,%
	font=\bfseries\boldmath,%
	wide,%
	labelindent=0pt,%
	topsep=1\baselineskip,%
	itemsep=1\baselineskip%
}
\crefname{maincaseswrapi}{case}{cases}
\newenvironment{maincases}{%
    \let\standarditem\item%
    \begin{maincaseswrap}%
    \renewcommand{\item}[1][]{\standarditem{\bfseries\boldmath{}##1\ifx##1\relax\else.\fi}\hspace{0.5em}}%
}{%
    \end{maincaseswrap}%
}

\newlist{subcaseswrap}{enumerate}{10}
\setlist[subcaseswrap]{%
	label=Subcase~\arabic{maincaseswrapi}.\arabic*:,%
	ref=\arabic{maincaseswrapi}.\arabic*,%
	font=\sffamily,%
	wide,%
	labelindent=0pt,%
	topsep=0.5\baselineskip,%
	itemsep=0.5\baselineskip%
}
\crefname{subcaseswrapi}{subcase}{subcases}
\newenvironment{subcases}{%
    \begin{subcaseswrap}%
    \renewcommand{\item}[1][]{\standarditem{\sffamily{}##1\ifx##1\relax\else.\fi}\hspace{0.25em}}%
}{%
    \end{subcaseswrap}%
}

\newlist{boundingcaseswrap}{enumerate}{1}
\setlist[boundingcaseswrap]{%
	label=Bounding,%
	ref=\arabic*,%
	font=\bfseries\boldmath,%
	wide,%
	labelindent=0pt,%
	topsep=1\baselineskip,%
	itemsep=1\baselineskip%
}
\crefname{boundingcaseswrapi}{boundingcase}{boundingcases}
\newenvironment{boundingcases}{%
    \let\standarditem\item%
    \begin{boundingcaseswrap}%
    \renewcommand{\item}[1][]{\standarditem{\bfseries\boldmath{}##1\ifx##1\relax\else.\fi}\hspace{0.5em}}%
}{%
    \end{boundingcaseswrap}%
}

\newlist{boundingsubcaseswrap}{enumerate}{1}
\setlist[boundingsubcaseswrap]{%
	label=Case~\arabic*:,%
	ref=\arabic*,%
	font=\sffamily,%
	wide,%
	labelindent=0pt,%
	topsep=0.5\baselineskip,%
	itemsep=0.5\baselineskip%
}
\crefname{boundingsubcaseswrapi}{boundingsubcase}{boundingsubcases}
\newenvironment{boundingsubcases}{%
    \begin{boundingsubcaseswrap}%
    \renewcommand{\item}[1][\relax]{\standarditem{\sffamily{}##1\ifx##1\relax\else.\fi}\hspace{0.25em}}%
}{%
    \end{boundingsubcaseswrap}%
}

\newlist{conditions}{enumerate}{2}
\setlist[conditions]{label=(\alph*)}
\crefname{conditionsi}{condition}{conditions}

\newlist{enumcases}{enumerate}{2}
\setlist[enumcases,1]{%
	label=Case~\arabic*:,%
	ref=\arabic*,%
	font=\bfseries\boldmath,%
	wide,%
	labelindent=0pt,%
	topsep=1\baselineskip,%
	itemsep=1\baselineskip%
}
\setlist[enumcases,2]{%
	label=Subcase~\arabic{enumcasesi}.\arabic*:,%
	ref=\arabic{enumcasesi}.\arabic*,%
	font=\sffamily,%
	wide,%
	labelindent=0pt,%
	topsep=0.5\baselineskip,%
	itemsep=0.5\baselineskip%
}
\crefname{enumcasesi}{case}{cases}
\crefname{enumcasesii}{subcase}{subcases}

\def\arr{\overrightarrow}
\def\indicator{\mathbf{1}}
\def\ee{\mathbf{e}}
\def\conv{\operatorname{conv}}
\def\D{\operatorname{D}}

\def\SEP{P}

\def\PP{\mathbb{P}}
\def\EE{\mathbb{E}}
    \def\E{\mathbb{E}}

    \def\var{\mathbb{V}}
    \def\VV{\mathbb{V}}
    \def\V{\mathbb{V}}

\def\ZZ{\mathbb{Z}}
\def\RR{\mathbb{R}}
\def\NN{\mathbb{N}}

\title{Central limit theorems for high dimensional lattice polytopes: symmetric edge polytopes} 
\author{
Torben Donzelmann\thanks{Universit\"at Osnabr\"uck, Germany. \url{torben.donzelmann@uos.de}}
\and
Martina Juhnke\thanks{Universit\"at Osnabr\"uck, Germany. \url{martina.juhnke@uos.de}}
\and
Benedikt Redno\ss\thanks{Ruhr University Bochum, Germany. \url{benedikt.rednoss@rub.de}}
\and
Christoph Th\"ale\thanks{Ruhr University Bochum, Germany. \url{christoph.thaele@rub.de}}
}
\date{}

\usepackage{todonotes}
\reversemarginpar
\begin{document}

\maketitle

\begin{abstract}
\noindent  We investigate symmetric edge polytopes generated by Erd\H{o}s--R\'enyi random graphs in a high-dimensional regime. These objects provide a natural and largely unexplored model of random lattice polytopes, in which geometric properties are governed by graph-theoretic structure. Focusing on the number of polytope edges and on the number of edges in unimodular triangulations, we derive precise asymptotics for expectations and variances and establish central limit theorems with explicit rates of convergence. Our analysis combines a detailed combinatorial-geometric study of the graph configurations determining the facial structure with the discrete Malliavin--Stein method for normal approximation. In particular, we identify a distinguished parameter value at which the leading variance term cancels, producing an atypical fluctuation regime. To the best of our knowledge, the results obtained here constitute the first distributional limit theorems for random lattice polytopes.\\[0.5cm]
\noindent\textbf{Keywords:} {Central limit theorem, discrete Malliavin--Stein method, edge count, Erd\H{o}s--R\'enyi random graph, random lattice polytope, symmetric edge polytope, triangulation}\\
\noindent\textbf{MSC (2020):} Primary 52B20; Secondary 52B05, 05C80, 60D05, 60F05.
\end{abstract}


\section{Introduction and main results}

\subsection{Motivation and background}

 Random polytopes have a long history in convex geometry and geometric probability, with the \emph{classical model} occupying a central place. In this model, one considers the convex hull of points drawn at random from a continuous distribution in~\(\RR^d\). Canonical examples include the uniform distribution on a convex body or on its boundary, as well as the beta-type or the standard Gaussian distribution. Fundamental questions concerning the volume of these random convex hulls, the number of their faces, and various shape characteristics have been studied extensively in the asymptotic regime where the number of generating points tends to infinity while the ambient dimension~\(d\) is kept fixed. In contrast, systematic investigations of random polytopes in high dimensions have only begun to emerge in recent years, driven by advances in asymptotic geometric analysis, random matrix theory and stochastic geometry. We refer the reader to the survey articles \cite{BaranySurvey,HugSurvey,ReitznerSurvey,SchneiderSurvey} and to the dedicated chapters on random polytopes in the monographs \cite{IsotropicConvexBodies,KST26,sw08} for comprehensive overviews of this area.

In contrast to the continuous setting, little attention has been devoted to \emph{random lattice polytopes}, which are obtained by selecting points in the integer lattice~\(\mathbb{Z}^d\) according to a probabilistic rule. The most common approach in the existing literature starts from a deterministic convex body and considers the convex hull of the lattice points it contains after a suitable randomization of the body or of the underlying lattice structure, see for instance \cite{BaranyMatousek,NgocReitzner}. Prior work has largely focused on expectations of geometric and combinatorial quantities and has been restricted to fixed ambient dimensions. To the best of our knowledge, distributional limit theorems for random lattice polytopes have not yet been established, in particular in the high-dimensional regime.

We consider two closely related classes of polytopes arising from graph-based constructions: the \emph{symmetric edge polytopes} and the \emph{cosmological polytopes}, originally introduced in \cite{OhsugiHibi} and \cite{CosmologicalIntorudction}, respectively. Both constructions give rise to lattice polytopes which encode fundamental graph-theoretic information in geometric form. The present paper is devoted to a model of random symmetric edge polytopes, while a second, separate paper will address the corresponding questions for random cosmological polytopes.

We now turn to symmetric edge polytopes, which are the central objects of the present paper, and recall their definition while fixing notation. Let \(G = (V, E)\) be a finite graph with set of \emph{nodes}
\[
V = [n] \coloneqq \{1, 2, \dots, n\}
\]
and set of \emph{arcs} \(E\subseteq \binom{V}{2}\coloneqq \{e~:~e\subseteq V, \; |e|=2\}\).
To avoid confusion, we will use the terms \emph{vertex} and \emph{edge} exclusively for $0$- and $1$-dimensional faces of a polytope, respectively. In the following, we use \((\ee_i)_{i\in V}\) to denote the standard basis of \(\RR^{V}\).

The \emph{symmetric edge polytope} \(\SEP_G\) associated to \(G\) is constructed as the convex hull of the points \(\pm (\ee_i - \ee_j)\) for each arc \(\{i,j\} \in E\), that is,
\[
\SEP_G \coloneqq \conv\bigl\{ \, \pm(\ee_i - \ee_j)~ :~ \{i,j\} \in E \, \bigr\} \subset \RR^{V},
\]
see \Cref{fig:SEPex} for examples. 
One may view this construction as embedding the adjacency structure of $G$ into \(\RR^{V}\). In this way, combinatorial features of the graph, most notably the presence or absence of short cycles, such as $3$- and $4$-cycles, are reflected in the facial structure of the polytope, in particular in its edges.
This connection has been exploited using methods from combinatorial commutative algebra and discrete geometry to derive detailed information on lattice point enumeration (for instance via the $h^\ast$-vector) and to construct unimodular triangulations of symmetric edge polytopes, see, for example, \cite{CodenottiVenturello-SEP,gammavectorsymmetricedgepoyltope,Arithmetic_apects_SEP,KT2023,MHN2011,OhsugiHibi,OH2014}. 

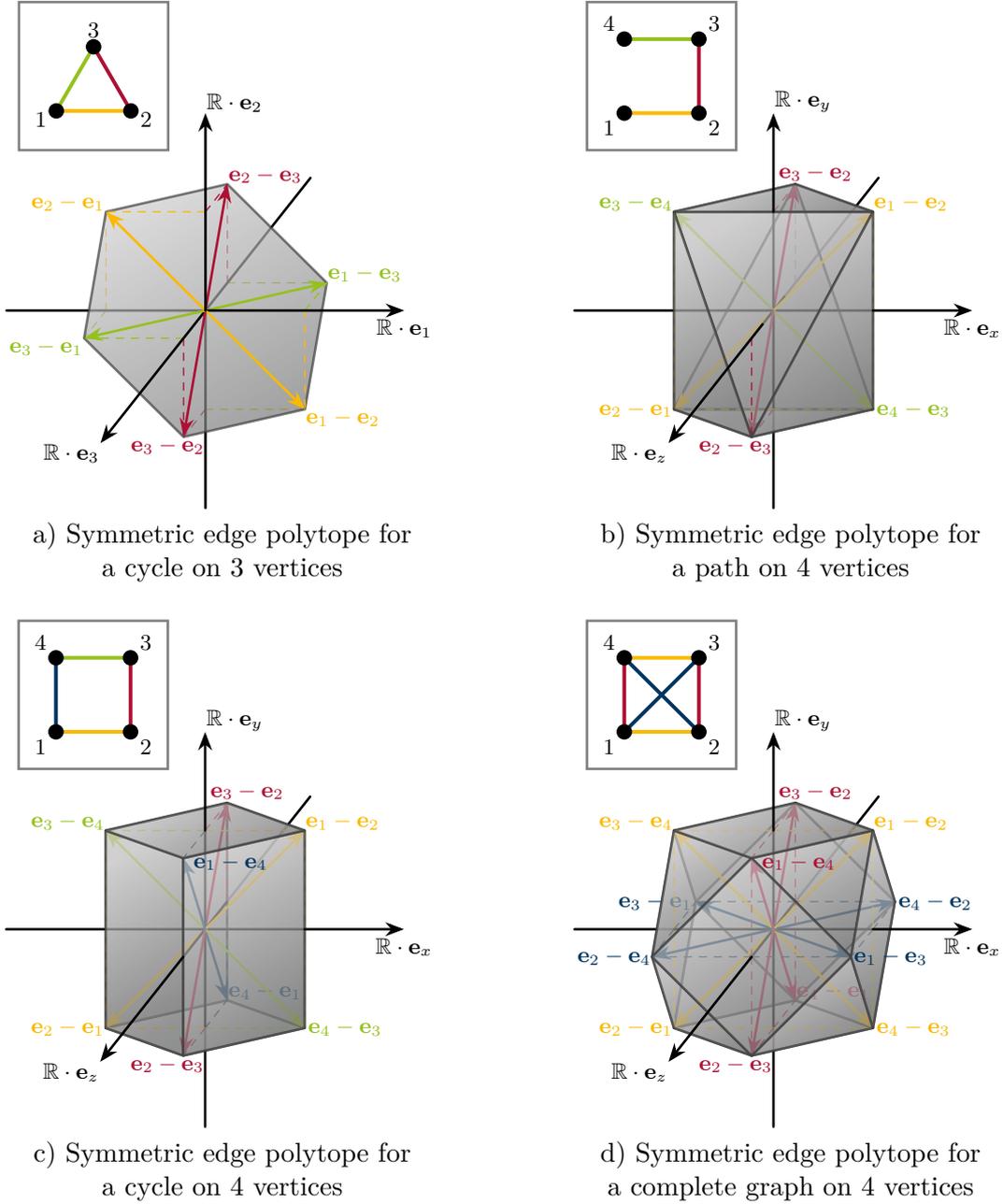
\begin{figure}%
    \tikzset{
        every picture/.style={scale=1.4, font=\footnotesize},
    	z={(-2.2mm,-2.8mm)},
    	ab-plane/.style={OSNAgold},
    	bc-plane/.style={OSNAred},
    	ca-plane/.style={RUBgreen},
    	cd-plane/.style={RUBgreen},
    	da-plane/.style={RUBblue},
    	xy-plane/.style={OSNAgold},
    	yz-plane/.style={OSNAred},
    	xz-plane/.style={RUBblue},
    	vertices/.style={line width=1pt,-Stealth,line cap=round},
    	aux-line/.style={line width=0.5pt,dashed,line cap=round},
    	coor-sys/.style={black,line width=1pt,line cap=round},
    	arrow/.style={-Stealth},
    	face/.style={shade,shading=axis,top color=RUBgray!90!black,bottom color=OSNAgray!60!black,shading angle=30,opacity=0.70},
    	polytope/.style={draw=OSNAgray!30!black,line width=1pt,line join=round},
        orig-graph/.style={shift={(-1.5,2)},scale=0.75},
        orig-vertex/.style={inner sep=2.2pt,outer sep=0pt,circle,fill=black},
        orig-arc/.style={line width=1.5pt},
        outer-box/.style={line width=1pt,gray},
    }%
    \def\axislengthneg{2}%
    \def\axislengthpos{2}%
    \def\zfactor{2.4}%
    \centering%
    \begin{minipage}{0.5\linewidth}%
        \centering%
        \begin{tikzpicture}
	\coordinate (O) at (0,0,0);

	\coordinate (e_x) at (1,0,0);
	\coordinate (e_y) at (0,1,0);
	\coordinate (e_z) at (0,0,1);

	\coordinate (e_-x) at ($-1*(e_x)$);
	\coordinate (e_-y) at ($-1*(e_y)$);
	\coordinate (e_-z) at ($-1*(e_z)$);
	
	\coordinate (e_xy) at ($(e_x)-(e_y)$);
	\coordinate (e_yz) at ($(e_y)-(e_z)$);
	\coordinate (e_zx) at ($(e_z)-(e_x)$);
	
	\coordinate (e_yx) at ($-1*(e_xy)$);
	\coordinate (e_zy) at ($-1*(e_yz)$);
	\coordinate (e_xz) at ($-1*(e_zx)$);
	
	\draw[ab-plane,aux-line] (e_-x) -- (e_yx);
	\draw[ca-plane,aux-line] (e_-x) -- (e_zx);
	\draw[ab-plane,aux-line] (e_-y) -- (e_xy);
	\draw[bc-plane,aux-line] (e_-y) -- (e_zy);
	\draw[ca-plane,aux-line] (e_-z) -- (e_xz);
	\draw[bc-plane,aux-line] (e_-z) -- (e_yz);

	\draw[coor-sys] (-\axislengthneg,0,0) -- (0,0,0);
	\draw[coor-sys] (0,-\axislengthneg,0) -- (0,0,0);
	\draw[coor-sys] (0,0,-\zfactor*\axislengthneg) -- (0,0,0);
	
	\path[polytope,face] (e_xy) -- (e_xz) -- (e_yz) -- (e_yx) -- (e_zx) -- (e_zy) -- cycle;
	
	\draw[ab-plane,aux-line] (e_x) -- (e_xy);
	\draw[ca-plane,aux-line] (e_x) -- (e_xz);
	\draw[ab-plane,aux-line] (e_y) -- (e_yx);
	\draw[bc-plane,aux-line] (e_y) -- (e_yz);
	\draw[ca-plane,aux-line] (e_z) -- (e_zx);
	\draw[bc-plane,aux-line] (e_z) -- (e_zy);
	
	\draw[ab-plane,vertices] (O) -- (e_yx) node[anchor=south east,inner sep=0pt] {$\ee_2-\ee_1$};
	\draw[bc-plane,vertices] (O) -- (e_yz) node[anchor=south west,inner sep=0pt] {$\ee_2-\ee_3$};
	\draw[ca-plane,vertices] (O) -- (e_xz) node[anchor=south west,inner sep=0pt] {$\ee_1-\ee_3$};

	\draw[coor-sys,arrow] (0,0,0) -- (\axislengthpos,0,0) node[anchor=north,inner sep=3pt] {$\mathbb{R} \cdot \ee_1$};
	\draw[coor-sys,arrow] (0,0,0) -- (0,0,\zfactor*\axislengthpos) node[anchor=north east,inner sep=0pt] {$\mathbb{R} \cdot \ee_3$};
	
	\draw[ab-plane,vertices] (O) -- (e_xy) node[anchor=north west,inner sep=0pt] {$\ee_1-\ee_2$};
	\draw[bc-plane,vertices] (O) -- (e_zy) node[anchor=30,inner sep=0pt] {$\ee_3-\ee_2$};
	\draw[ca-plane,vertices] (O) -- (e_zx) node[anchor=north east,inner sep=0pt] {$\ee_3-\ee_1$};
	
	\draw[coor-sys,arrow] (0,0,0) -- (0,\axislengthpos,0) node[anchor=south west,inner sep=0pt] {$\mathbb{R} \cdot \ee_2$};

    \begin{scope}[orig-graph]
        \begin{scope}[shift={(0,0.03)}]
            \coordinate (a) at (0,0);
            \coordinate (b) at (1,0);
            \coordinate (c) at (0.5,sqrt 3 /2);
        \end{scope}
        \draw[orig-arc,ab-plane] (a) -- (b);
        \draw[orig-arc,bc-plane] (b) -- (c);
        \draw[orig-arc,ca-plane] (c) -- (a);
        \node[name=a-node,orig-vertex] at (a) {};
        \node[name=b-node,orig-vertex] at (b) {};
        \node[name=c-node,orig-vertex] at (c) {};
        \node[anchor= 30] at (a-node) {$1$};
        \node[anchor=150] at (b-node) {$2$};
        \node[anchor=270] at (c-node) {$3$};
        \draw[outer-box] (-0.5,-0.5) -- ++(2,0) -- ++(0,2) -- ++(-2,0) -- cycle;
    \end{scope}
\end{tikzpicture}\\
        a) Symmetric edge polytope for\\
        a cycle on $3$ vertices%
    \end{minipage}%
    \begin{minipage}{0.5\linewidth}%
        \centering%
        \begin{tikzpicture}
	\coordinate (O) at (0,0,0);

	\coordinate (e_x) at (1,0,0);
	\coordinate (e_y) at (0,1,0);
	\coordinate (e_z) at (0,0,1);

	\coordinate (e_-x) at ($-1*(e_x)$);
	\coordinate (e_-y) at ($-1*(e_y)$);
	\coordinate (e_-z) at ($-1*(e_z)$);
	
	\coordinate (e_ab) at ($ (e_x)+(e_y) $);
	\coordinate (e_bc) at ($ (e_z)+(e_-y)$);
	\coordinate (e_cd) at ($(e_-x)+(e_y) $);
	\coordinate (e_da) at ($(e_-z)+(e_-y)$);
	
	\coordinate (e_ba) at ($-1*(e_ab)$);
	\coordinate (e_cb) at ($-1*(e_bc)$);
	\coordinate (e_dc) at ($-1*(e_cd)$);
	\coordinate (e_ad) at ($-1*(e_da)$);
	
	\draw[coor-sys] (-\axislengthneg,0,0) -- (-1,0,0);
	\draw[coor-sys] (0,-\axislengthneg,0) -- (0,-1,0);
	\draw[coor-sys] (0,0,-\zfactor*\axislengthneg) -- (0,0,-0.5);
	
	\path[polytope,face] (e_cd) -- (e_cb) -- (e_ba) -- cycle;
	\path[polytope,face] (e_ab) -- (e_cb) -- (e_dc) -- cycle;
	\path[polytope,face] (e_dc) -- (e_bc) -- (e_ba) -- cycle;
	
	\draw[bc-plane,aux-line] (e_-z) -- (e_cb);
	
	\path[polytope,face] (e_ba) -- (e_cb) -- (e_dc) -- cycle;
	
	\draw[bc-plane,vertices] (O) -- (e_cb) node[anchor=210,inner sep=1pt] {$\ee_3-\ee_2$};
	\draw[bc-plane,aux-line] (e_y) -- (e_cb);
	
	\draw[coor-sys] (0,-1,0) -- (0,1,0);
	\draw[coor-sys] (0,0,-0.5) -- (0,0,0);
	
	\draw[ab-plane,aux-line] (e_-x) -- (e_ba);
	\draw[ab-plane,vertices] (O) -- (e_ba) node[anchor= east,inner sep=0pt] {$\ee_2-\ee_1$};
	\draw[ab-plane,aux-line] (e_-y) -- (e_ba);
	
	\draw[cd-plane,aux-line] (e_-x) -- (e_cd);
	\draw[cd-plane,vertices] (O) -- (e_cd) node[anchor=south east,inner sep=0pt] {$\ee_3-\ee_4$};
	\draw[cd-plane,aux-line] (e_y) -- (e_cd);

		\draw[coor-sys] (-1,0,0) -- (1,0,0);
	
	\draw[cd-plane,aux-line] (e_-y) -- (e_dc);
	\draw[cd-plane,vertices] (O) -- (e_dc) node[anchor=west,inner sep=1pt] {$\ee_4-\ee_3$};
	\draw[cd-plane,aux-line] (e_x) -- (e_dc);
	
	\draw[ab-plane,aux-line] (e_y) -- (e_ab);
	\draw[ab-plane,vertices] (O) -- (e_ab) node[anchor=south west,inner sep=0pt] {$\ee_1-\ee_2$};
	\draw[ab-plane,aux-line] (e_x) -- (e_ab);
	
	\draw[coor-sys] (0,0,0) -- (0,0,0.5);
	
	\draw[bc-plane,aux-line] (e_-y) -- (e_bc);
	\draw[bc-plane,vertices] (O) -- (e_bc) node[anchor=30,inner sep=0pt] {$\ee_2-\ee_3$};
	
	\path[polytope,face] (e_ab) -- (e_bc) -- (e_cd) -- cycle;

	\draw[bc-plane,aux-line] (e_z) -- (e_bc);
	
	\path[polytope,face] (e_cd) -- (e_bc) -- (e_ba) -- cycle;
	\path[polytope,face] (e_ab) -- (e_bc) -- (e_dc) -- cycle;
	\path[polytope,face] (e_ab) -- (e_cd) -- (e_cb) -- cycle;

	\draw[coor-sys,arrow] (1,0,0) -- (\axislengthpos,0,0) node[anchor=north,inner sep=3pt] {$\mathbb{R} \cdot \ee_x$};
	\draw[coor-sys,arrow] (0,1,0) -- (0,\axislengthpos,0) node[anchor=south west,inner sep=0pt] {$\mathbb{R} \cdot \ee_y$};
	\draw[coor-sys,arrow] (0,0,0.5) -- (0,0,\zfactor*\axislengthpos) node[anchor=north east,inner sep=0pt] {$\mathbb{R} \cdot \ee_z$};

    \begin{scope}[orig-graph]
        \coordinate (a) at (0,0);
        \coordinate (b) at (1,0);
        \coordinate (c) at (1,1);
        \coordinate (d) at (0,1);
        \draw[orig-arc,ab-plane] (a) -- (b);
        \draw[orig-arc,bc-plane] (b) -- (c);
        \draw[orig-arc,cd-plane] (c) -- (d);
        \node[name=a-node,orig-vertex] at (a) {};
        \node[name=b-node,orig-vertex] at (b) {};
        \node[name=c-node,orig-vertex] at (c) {};
        \node[name=d-node,orig-vertex] at (d) {};
        \node[anchor= 45] at (a) {$1$};
        \node[anchor=135] at (b) {$2$};
        \node[anchor=225] at (c) {$3$};
        \node[anchor=315] at (d) {$4$};
        \draw[outer-box] (-0.5,-0.5) -- ++(2,0) -- ++(0,2) -- ++(-2,0) -- cycle;
    \end{scope}
\end{tikzpicture}\\
        b) Symmetric edge polytope for\\
        a path on $4$ vertices%
    \end{minipage}%
    \vspace{0.5cm}%
    \linebreak%
    \begin{minipage}{0.5\linewidth}%
        \centering%
        \begin{tikzpicture}
	\coordinate (O) at (0,0,0);

	\coordinate (e_x) at (1,0,0);
	\coordinate (e_y) at (0,1,0);
	\coordinate (e_z) at (0,0,1);

	\coordinate (e_-x) at ($-1*(e_x)$);
	\coordinate (e_-y) at ($-1*(e_y)$);
	\coordinate (e_-z) at ($-1*(e_z)$);
	
	\coordinate (e_ab) at ($ (e_x)+(e_y) $);
	\coordinate (e_bc) at ($ (e_z)+(e_-y)$);
	\coordinate (e_cd) at ($(e_-x)+(e_y) $);
	\coordinate (e_da) at ($(e_-z)+(e_-y)$);
	
	\coordinate (e_ba) at ($-1*(e_ab)$);
	\coordinate (e_cb) at ($-1*(e_bc)$);
	\coordinate (e_dc) at ($-1*(e_cd)$);
	\coordinate (e_ad) at ($-1*(e_da)$);
	
	\draw[coor-sys] (-\axislengthneg,0,0) -- (-1,0,0);
	\draw[coor-sys] (0,-\axislengthneg,0) -- (0,-1,0);
	\draw[coor-sys] (0,0,-\zfactor*\axislengthneg) -- (0,0,-1);
	
	\path[polytope,face] (e_ab) -- (e_cb) -- (e_da) -- (e_dc) -- cycle;
	\path[polytope,face] (e_da) -- (e_dc) -- (e_bc) -- (e_ba) -- cycle;
	\path[polytope,face] (e_cd) -- (e_cb) -- (e_da) -- (e_ba) -- cycle;
	
	\draw[da-plane,aux-line] (e_-z) -- (e_da);
	\draw[da-plane,vertices] (O) -- (e_da) node[anchor=south west,inner sep=0pt] {$\ee_4-\ee_1$};
	\draw[da-plane,aux-line] (e_-y) -- (e_da);
	
	\draw[bc-plane,aux-line] (e_-z) -- (e_cb);
	\draw[bc-plane,vertices] (O) -- (e_cb) node[anchor=210,inner sep=1pt] {$\ee_3-\ee_2$};
	\draw[bc-plane,aux-line] (e_y) -- (e_cb);
	
	\draw[coor-sys] (0,-1,0) -- (0,1,0);
	\draw[coor-sys] (0,0,-1) -- (0,0,0);
	
	\draw[ab-plane,aux-line] (e_-x) -- (e_ba);
	\draw[ab-plane,vertices] (O) -- (e_ba) node[anchor=east,inner sep=0pt] {$\ee_2-\ee_1$};
	\draw[ab-plane,aux-line] (e_-y) -- (e_ba);
	
	\draw[cd-plane,aux-line] (e_-x) -- (e_cd);
	\draw[cd-plane,vertices] (O) -- (e_cd) node[anchor=south east,inner sep=0pt] {$\ee_3-\ee_4$};
	\draw[cd-plane,aux-line] (e_y) -- (e_cd);

	\draw[coor-sys] (-1,0,0) -- (1,0,0);
	
	\draw[cd-plane,aux-line] (e_-y) -- (e_dc);
	\draw[cd-plane,vertices] (O) -- (e_dc) node[anchor=west,inner sep=1pt] {$\ee_4-\ee_3$};
	\draw[cd-plane,aux-line] (e_x) -- (e_dc);
	
	\draw[ab-plane,aux-line] (e_y) -- (e_ab);
	\draw[ab-plane,vertices] (O) -- (e_ab) node[anchor=south west,inner sep=0pt] {$\ee_1-\ee_2$};
	\draw[ab-plane,aux-line] (e_x) -- (e_ab);
	
	\draw[coor-sys] (0,0,0) -- (0,0,1);
	
	\draw[bc-plane,aux-line] (e_-y) -- (e_bc);
	\draw[bc-plane,vertices] (O) -- (e_bc) node[anchor=30,inner sep=0pt] {$\ee_2-\ee_3$};
	\draw[bc-plane,aux-line] (e_z) -- (e_bc);
	
	\draw[da-plane,aux-line] (e_y) -- (e_ad);
	\draw[da-plane,vertices] (O) -- (e_ad);
	\draw[da-plane,aux-line] (e_z) -- (e_ad);
	
	\path[polytope,face] (e_cd) -- (e_ad) -- (e_bc) -- (e_ba) -- cycle;
	\path[polytope,face] (e_ab) -- (e_ad) -- (e_bc) -- (e_dc) -- cycle;
	\path[polytope,face] (e_ab) -- (e_ad) -- (e_cd) -- (e_cb) -- cycle;
	
	\draw[coor-sys,arrow] (1,0,0) -- (\axislengthpos,0,0) node[anchor=north,inner sep=3pt] {$\mathbb{R} \cdot \ee_x$};
	\draw[coor-sys,arrow] (0,1,0) -- (0,\axislengthpos,0) node[anchor=south west,inner sep=0pt] {$\mathbb{R} \cdot \ee_y$};
	\draw[coor-sys,arrow] (0,0,1) -- (0,0,\zfactor*\axislengthpos) node[anchor=north east,inner sep=0pt] {$\mathbb{R} \cdot \ee_z$};
	
	\node[da-plane,anchor=175,inner sep=4pt] at (e_ad) {$\ee_1-\ee_4$};

    \begin{scope}[orig-graph]
        \coordinate (a) at (0,0);
        \coordinate (b) at (1,0);
        \coordinate (c) at (1,1);
        \coordinate (d) at (0,1);
        \draw[orig-arc,ab-plane] (a) -- (b);
        \draw[orig-arc,bc-plane] (b) -- (c);
        \draw[orig-arc,cd-plane] (c) -- (d);
        \draw[orig-arc,da-plane] (d) -- (a);
        \node[name=a-node,orig-vertex] at (a) {};
        \node[name=b-node,orig-vertex] at (b) {};
        \node[name=c-node,orig-vertex] at (c) {};
        \node[name=d-node,orig-vertex] at (d) {};
        \node[anchor= 45] at (a) {$1$};
        \node[anchor=135] at (b) {$2$};
        \node[anchor=225] at (c) {$3$};
        \node[anchor=315] at (d) {$4$};
        \draw[outer-box] (-0.5,-0.5) -- ++(2,0) -- ++(0,2) -- ++(-2,0) -- cycle;
    \end{scope}
\end{tikzpicture}\\
        c) Symmetric edge polytope for\\
        a cycle on $4$ vertices%
    \end{minipage}%
    \begin{minipage}{0.5\linewidth}%
        \centering%
        \begin{tikzpicture}
	\coordinate (O) at (0,0,0);

	\coordinate (e_x) at (1,0,0);
	\coordinate (e_y) at (0,1,0);
	\coordinate (e_z) at (0,0,1);

	\coordinate (e_-x) at ($-1*(e_x)$);
	\coordinate (e_-y) at ($-1*(e_y)$);
	\coordinate (e_-z) at ($-1*(e_z)$);
	
	\coordinate (e_ab) at ($(e_x) + (e_y)$);
	\coordinate (e_bc) at ($(e_-y)+ (e_z)$);
	\coordinate (e_cd) at ($(e_-x)+ (e_y)$);
	\coordinate (e_da) at ($(e_-y)+(e_-z)$);
	\coordinate (e_ac) at ($(e_x) + (e_z)$);
	\coordinate (e_bd) at ($(e_-x)+ (e_z)$);
	
	\coordinate (e_ba) at ($-1*(e_ab)$);
	\coordinate (e_cb) at ($-1*(e_bc)$);
	\coordinate (e_dc) at ($-1*(e_cd)$);
	\coordinate (e_ad) at ($-1*(e_da)$);
	\coordinate (e_ca) at ($-1*(e_ac)$);
	\coordinate (e_db) at ($-1*(e_bd)$);
	
	\draw[coor-sys] (-\axislengthneg,0,0) -- (-1,0,0);
	\draw[coor-sys] (0,-\axislengthneg,0) -- (0,-1,0);
	\draw[coor-sys] (0,0,-\zfactor*\axislengthneg) -- (0,0,-1);
	
	\path[polytope,face] (e_ca) -- (e_cb) -- (e_db) -- (e_da) -- cycle;
	\path[polytope,face] (e_ba) -- (e_bc) -- (e_dc) -- (e_da) -- cycle;
	\path[polytope,face] (e_ca) -- (e_cd) -- (e_bd) -- (e_ba) -- cycle;
	\path[polytope,face] (e_cd) -- (e_cb) -- (e_ca) -- cycle;
	\path[polytope,face] (e_cb) -- (e_ab) -- (e_db) -- cycle;
	\path[polytope,face] (e_ba) -- (e_ca) -- (e_da) -- cycle;
	\path[polytope,face] (e_da) -- (e_db) -- (e_dc) -- cycle;
	
	\draw[yz-plane,aux-line] (e_-z) -- (e_da);
	\draw[yz-plane,vertices] (O) -- (e_da) node[anchor=south west,inner sep=0pt] {$\ee_4-\ee_1$};
	\draw[yz-plane,aux-line] (e_-y) -- (e_da);
	
	\draw[xz-plane,aux-line] (e_-z) -- (e_ca);
	\draw[xz-plane,vertices] (O) -- (e_ca) node[anchor=east,inner sep=1pt] {$\ee_3-\ee_1$};
	\draw[xz-plane,aux-line] (e_-x) -- (e_ca);
	
	\draw[xz-plane,aux-line] (e_-z) -- (e_db);
	\draw[xz-plane,vertices] (O) -- (e_db) node[anchor=west,inner sep=1pt] {$\ee_4-\ee_2$};
	\draw[xz-plane,aux-line] (e_x) -- (e_db);
	
	\draw[yz-plane,aux-line] (e_-z) -- (e_cb);
	\draw[yz-plane,vertices] (O) -- (e_cb) node[anchor=210,inner sep=1pt] {$\ee_3-\ee_2$};
	\draw[yz-plane,aux-line] (e_y) -- (e_cb);
	
	\draw[coor-sys] (0,-1,0) -- (0,1,0);
	\draw[coor-sys] (0,0,-1) -- (0,0,0);
	
	\draw[xy-plane,aux-line] (e_-x) -- (e_ba);
	\draw[xy-plane,vertices] (O) -- (e_ba) node[anchor=east,inner sep=0pt] {$\ee_2-\ee_1$};
	\draw[xy-plane,aux-line] (e_-y) -- (e_ba);
	
	\draw[xy-plane,aux-line] (e_-x) -- (e_cd);
	\draw[xy-plane,vertices] (O) -- (e_cd) node[anchor=south east,inner sep=0pt] {$\ee_3-\ee_4$};
	\draw[xy-plane,aux-line] (e_y) -- (e_cd);

	\draw[coor-sys] (-1,0,0) -- (1,0,0);
	
	\draw[xy-plane,aux-line] (e_-y) -- (e_dc);
	\draw[xy-plane,vertices] (O) -- (e_dc) node[anchor=west,inner sep=1pt] {$\ee_4-\ee_3$};
	\draw[xy-plane,aux-line] (e_x) -- (e_dc);
	
	\draw[xy-plane,aux-line] (e_y) -- (e_ab);
	\draw[xy-plane,vertices] (O) -- (e_ab) node[anchor=south west,inner sep=0pt] {$\ee_1-\ee_2$};
	\draw[xy-plane,aux-line] (e_x) -- (e_ab);
	
	\draw[coor-sys] (0,0,0) -- (0,0,1);
	
	\draw[xz-plane,aux-line] (e_-x) -- (e_bd);
	\draw[xz-plane,vertices] (O) -- (e_bd) node[anchor=east,inner sep=0pt] {$\ee_2-\ee_4$};
	\draw[xz-plane,aux-line] (e_z) -- (e_bd);
	
	\draw[xz-plane,aux-line] (e_x) -- (e_ac);
	\draw[xz-plane,vertices] (O) -- (e_ac);
	\draw[xz-plane,aux-line] (e_z) -- (e_ac);
	
	\draw[yz-plane,aux-line] (e_-y) -- (e_bc);
	\draw[yz-plane,vertices] (O) -- (e_bc) node[anchor=30,inner sep=0pt] {$\ee_2-\ee_3$};
	\draw[yz-plane,aux-line] (e_z) -- (e_bc);
	
	\draw[yz-plane,aux-line] (e_y) -- (e_ad);
	\draw[yz-plane,vertices] (O) -- (e_ad);
	\draw[yz-plane,aux-line] (e_z) -- (e_ad);
	
	\path[polytope,face] (e_dc) -- (e_bc) -- (e_ac) -- cycle;
	\path[polytope,face] (e_bc) -- (e_ba) -- (e_bd) -- cycle;
	\path[polytope,face] (e_ab) -- (e_ac) -- (e_ad) -- cycle;
	\path[polytope,face] (e_ad) -- (e_bd) -- (e_cd) -- cycle;
	\path[polytope,face] (e_ac) -- (e_bc) -- (e_bd) -- (e_ad) -- cycle;
	\path[polytope,face] (e_ab) -- (e_cb) -- (e_cd) -- (e_ad) -- cycle;
	\path[polytope,face] (e_ac) -- (e_dc) -- (e_db) -- (e_ab) -- cycle;
	
	\draw[coor-sys,arrow] (1,0,0) -- (\axislengthpos,0,0) node[anchor=north,inner sep=3pt] {$\mathbb{R} \cdot \ee_x$};
	\draw[coor-sys,arrow] (0,1,0) -- (0,\axislengthpos,0) node[anchor=south west,inner sep=0pt] {$\mathbb{R} \cdot \ee_y$};
	\draw[coor-sys,arrow] (0,0,1) -- (0,0,\zfactor*\axislengthpos) node[anchor=north east,inner sep=0pt] {$\mathbb{R} \cdot \ee_z$};
	
	\node[xz-plane,anchor=west,inner sep=1pt] at (e_ac) {$\ee_1-\ee_3$};
	\node[yz-plane,anchor=175,inner sep=4pt] at (e_ad) {$\ee_1-\ee_4$};
    
    \begin{scope}[orig-graph]
        \coordinate (a) at (0,0);
        \coordinate (b) at (1,0);
        \coordinate (c) at (1,1);
        \coordinate (d) at (0,1);
        \draw[orig-arc,xy-plane] (a) -- (b);
        \draw[orig-arc,xy-plane] (c) -- (d);
        \draw[orig-arc,yz-plane] (b) -- (c);
        \draw[orig-arc,yz-plane] (d) -- (a);
        \draw[orig-arc,xz-plane] (a) -- (c);
        \draw[orig-arc,xz-plane] (b) -- (d);
        \node[name=a-node,orig-vertex] at (a) {};
        \node[name=b-node,orig-vertex] at (b) {};
        \node[name=c-node,orig-vertex] at (c) {};
        \node[name=d-node,orig-vertex] at (d) {};
        \node[anchor= 45] at (a) {$1$};
        \node[anchor=135] at (b) {$2$};
        \node[anchor=225] at (c) {$3$};
        \node[anchor=315] at (d) {$4$};
        \draw[outer-box] (-0.5,-0.5) -- ++(2,0) -- ++(0,2) -- ++(-2,0) -- cycle;
    \end{scope}
\end{tikzpicture}\\
        d) Symmetric edge polytope for\\
        a complete graph on $4$ vertices%
    \end{minipage}%
    \caption{%
        Examples of symmetric edge polytopes for different graphs; a) shows a polytope in $\RR^3$, depicted in its natural embedding. The polytopes in b)--d) are subsets of $\RR^4$, but each lies entirely within the hyperplane orthogonal to $\ee_1 + \ee_2 + \ee_3 + \ee_4$. Only this hyperplane is visualized, represented via the basis vectors
        $\ee_x=\frac12 (\ee_1 - \ee_2 - \ee_3 + \ee_4)$,
        $\ee_y=\frac12 (\ee_1 - \ee_2 + \ee_3 - \ee_4)$,
        $\ee_z=\frac12 (\ee_1 + \ee_2 - \ee_3 - \ee_4)$.%
    }
    \label{fig:SEPex}
\end{figure}

To introduce randomness into this construction, we take \(G\) to be a realization of an Erd\H{o}s--R\'enyi random graph \(G_{n,p}\)  
(see \Cref{sec:Statement} for the definition and \cite{BollobasRandomGraphs,JansonRandomGraphs,VanDerHofstad} for background material on this random graph model).
We then consider the associated random symmetric edge polytope
\[
\SEP_{n,p} \coloneqq \SEP_{G_{n,p}} .
\]
This random polytope inherits intricate combinatorial properties from the underlying random graph \(G_{n,p}\), which in turn determine its geometry and face structure.

Our primary focus lies on understanding the number of edges of \(\SEP_{n,p}\), as well as the number of edges appearing in any of its unimodular triangulations (which exist due to \cite{Arithmetic_apects_SEP}), see \Cref{sec:BackgroundPolytopes} for formal definitions. 
We demonstrate that these random variables exhibit a rich and previously unexplored asymptotic behavior as \(n \to \infty\).
Using a detailed combinatorial-geometric analysis of the underlying graph configurations, we derive sharp asymptotics for the expectations and variances of both the polytope edge count and the number of edges in any unimodular triangulation.
This analysis reveals a non-generic parameter regime in which the leading variance term is suppressed, giving rise to a slower growth of fluctuations, an effect that has no apparent analogue in classical subgraph counting problems for Erd\H{o}s--R\'enyi random graphs.
Building on these structural results, we apply the discrete Malliavin--Stein method for normal approximation to establish central limit theorems with explicit rates of convergence for the normalized edge counts.

Previously, for random symmetric edge polytopes, only the number of facets and concentration inequalities for face numbers had been investigated, see \cite{BraunSEP,gammavectorsymmetricedgepoyltope}.
To the best of our knowledge, the present work provides the first distributional limit theorems for random lattice polytopes and the first systematic study of their high-dimensional fluctuations.

\subsection{Statement and discussion of the main result}\label{sec:Statement}

Let \(G_{n,p}\) denote the Erd\H{o}s--R\'enyi random graph on \(n\) nodes, where each arc of the complete graph is retained independently with probability \(p = p(n) \in (0,1)\). Throughout, we allow \(p\) to depend on \(n\) and suppress this dependence in the notation when unambiguous. Our main objects of interest are random variables counting the number of edges of the random symmetric edge polytope \(\SEP_{n,p}\) associated with $G_{n,p}$, and the number of edges appearing in any unimodular triangulation of \(\SEP_{n,p}\). We denote any such random variable by \(K_{n,p}\) and determine its expectation and variance. Furthermore, we establish a central limit theorem for \( K_{n,p} \) as $n\to\infty$. To describe the asymptotic behaviour of these and related quantities, we use the notation introduced in the following table.

\begin{table}[h]%
	\renewcommand{\arraystretch}{1.2}%
    \centering%
	\begin{tabular}{ccl}%
		\toprule%
		\bfseries Notation & \bfseries Definition & \bfseries
            \begin{minipage}{8.4em}%
                \centering%
                Landau symbol%
            \end{minipage}\\
		\midrule%
		$a_n\simeq b_n$
		& $0<\liminf_{n\to \infty}\lvert\frac{a_n}{b_n}\rvert\leq \limsup_{n\to \infty} \lvert\frac{a_n}{b_n}\rvert < \infty$
		& \hspace{3em}$a_n\in \Theta(b_n)$
		\\
		$a_n\ll b_n$
		& \hspace{6em}$\mathllap{\limsup_{n\to \infty} \lvert\frac{a_n}{b_n}\rvert} = \mathrlap{0}$
		& \hspace{3em}$a_n\in o(b_n)$
		\\
		$a_n\gg b_n$
		& \hspace{6em}$\mathllap{\liminf_{n\to \infty} \lvert\frac{a_n}{b_n}\rvert} = \mathrlap{\infty}$
		& \hspace{3em}$a_n\in \omega(b_n)$
		\\
		$a_n \lesssim b_n$
		& \hspace{6em}$\mathllap{\limsup_{n\to \infty} \lvert\frac{a_n}{b_n}\rvert} < \mathrlap{\infty}$
		& \hspace{3em}$ a_n\in O(b_n)$
		\\
		$a_n \gtrsim b_n$
		& \hspace{6em}$\mathllap{\liminf_{n\to \infty} \lvert\frac{a_n}{b_n}\rvert} > \mathrlap{0}$
		& \hspace{3em}$a_n\in \Omega(b_n)$
		\\
		\bottomrule%
	\end{tabular}%
	\caption{%
        Notation for asymptotic relations between two sequences
        $(a_n)_{n\geq0}$~and~$(b_n)_{n\geq 0}$
        with the notation in the first column adopted throughout the paper
        and the classical Landau symbol included for reference.%
    }%
    \label{tab:notation}%
\end{table}%

Having set up these notations, we can now formulate our main result. 
\begin{thm}\label{thm:Main}
    Assume that \( p \gg n^{-2} \) and \( \limsup_{n\to\infty} p < 1 \).
    Let \( K_{n,p} \) denote either
    \begin{itemize}
        \item the number of edges of the random symmetric edge polytope \( \SEP_{n,p} \), or
        \item the number of edges appearing in any unimodular triangulation of \( \SEP_{n,p} \).
    \end{itemize}
    If \(K_{n,p}\) corresponds to the edge count of \( \SEP_{n,p} \), we additionally assume that \(1/\sqrt{2}\) is not an accumulation point of \(p\).
    Then, as \( n \to \infty \), the following statements hold:
    \begin{enumerate}
    \item\label{thm:Main:a} The expectation and variance satisfy
        \[
        \EE[K_{n,p}] \simeq n^4 p^2
        \qquad \text{and} \qquad
        \var(K_{n,p}) \simeq n^6 p^3 .
        \]
    \item\label{thm:Main:b} The normalized random variable satisfies a central limit theorem, that is,
        \[
        \frac{K_{n,p} - \EE[K_{n,p}]}{\sqrt{\var(K_{n,p})}}
        \xrightarrow{d} \mathcal{N}(0,1),
        \]
        where \( \xrightarrow{d} \mathcal{N}(0,1)\) denotes convergence in distribution to a standard Gaussian random variable.
    \end{enumerate}
\end{thm}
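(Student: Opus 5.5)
The plan is to write \(K_{n,p}\) as an explicit polynomial in the arc indicators \(X_e=\indicator\{e\in E(G_{n,p})\}\), \(e\in\binom{[n]}{2}\), and then to combine a Hoeffding-type (\(p\)-biased Walsh) decomposition for the moments with the discrete Malliavin--Stein bound for Rademacher functionals to get the CLT.

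\textbf{Step 1 (combinatorial description).} Using the characterization of the faces of \(\SEP_G\) from \Cref{sec:BackgroundPolytopes}, I would describe when two vertices \(\ee_i-\ee_j\) and \(\ee_k-\ee_l\) span an edge of \(\SEP_G\). This should reduce to a local condition on at most four nodes:
\begin{itemize}
\item For a directed path \(i\to j\to l\), the segment is an edge if and only if \(\{i,l\}\notin E\), because otherwise \(\ee_i-\ee_l\) is the midpoint.
\item For two arcs with a common tail or a common head, the segment is always an edge.
\item For disjoint arcs, the segment is an edge unless the "crossing" arcs \(\{i,l\}\) and \(\{k,j\}\) are both present, since \(\tfrac12(\ee_i+\ee_k-\ee_j-\ee_l)\) then has a second representation.
\end{itemize}
For unimodular triangulations, I would use the corresponding description, namely that every edge of the polytope is in the triangulation and the remaining triangulation edges are forced by the \(4\)-cycle structure. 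Here I need the paper's result that the edge count does not depend on the chosen triangulation. In either case
\[
K_{n,p}=\sum_{S\in\binom{[n]}{\le 4}} f_S\bigl((X_e)_{e\subseteq S}\bigr),
\]
with finitely many isomorphism types of local functions \(f_S\). The expectation follows by summing over types. The disjoint-arc pairs dominate and give \(\EE[K_{n,p}]\simeq n^4p^2\), with a coefficient of the form \(c_1p^2(1-p^2)+c_2p^2\cdots\) that stays bounded away from \(0\) because \(\limsup p<1\).

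\textbf{Step 2 (variance).} I would expand each \(f_S\) in the orthonormal basis \(\chi_A=\prod_{e\in A}(X_e-p)/\sqrt{p(1-p)}\) and collect the coefficients \(\hat K(A)\) by the isomorphism type of the arc set \(A\). Then \(\var(K_{n,p})=\sum_{A\neq\emptyset}\hat K(A)^2\).

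The first-order term \(|A|=1\) is expected to dominate. A single arc lies in about \(n^2\) four-node configurations, each of which contributes \(\partial_p\EE f_S\cdot\sqrt{p(1-p)}\). This gives \(\hat K(\{e\})\asymp n^2 p^{1/2}\,g(p)\) and hence a total first-order contribution of about \(n^2\cdot n^4 p\,g(p)^2\simeq n^6p^3\) when \(g(p)\asymp p\).

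For the polytope edge count, \(g\) is, up to lower-order terms, \(\tfrac{d}{dp}\bigl[p^2(1-p^2)\bigr]\propto p(1-2p^2)\). This vanishes at \(p=1/\sqrt2\), which explains the extra hypothesis. For the triangulation count, the analogous derivative has no zero in \((0,1)\).

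It then remains to show that the higher-order coefficients are negligible. Terms with \(|A|=2\) sharing a node contribute about \(n^3\cdot(n^2)^2\)-type sums that are \(O(n^5p^2+n^4p^2)\ll n^6p^3\) precisely when \(p\gg n^{-1}\)-type conditions hold. For very sparse \(p\) (between \(n^{-2}\) and \(n^{-1}\)) I need to redo this count carefully, because low-order terms coming from pairs of arcs without crossings may compete. I would verify there that the overall variance is still \(\simeq n^6p^3\); this bookkeeping seems delicate.

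\textbf{Step 3 (CLT).} I would apply the discrete second-order Poincaré / Malliavin--Stein bound for functionals of independent Bernoulli variables (Krokowski--Reichenbachs--Thäle). It bounds \(d_{K}\bigl(\tilde K,\mathcal N(0,1)\bigr)\) by sums such as
\[
\frac{1}{\var(K)}\Bigl(\sum_{e,f,g}\bigl(\EE[(D_eK)^2(D_fK)^2]\bigr)^{1/2}\bigl(\EE[(D_gD_eK)^2(D_gD_fK)^2]\bigr)^{1/2}\Bigr)^{1/2}
\]
together with \(\var(K)^{-3/2}\sum_e\EE|D_eK|^3\)-type terms. The add-one cost \(D_eK\) is a sum over at most \(O(n^2)\) local configurations through \(e\), so \(\EE|D_eK|^4\lesssim (n^2p)^4+\text{lower order}\). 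The second difference \(D_gD_eK\) is nonzero only if \(e\) and \(g\) share a node or lie in a common \(4\)-set, and it has size \(O(n)\) or \(O(1)\) respectively.

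Counting triples \((e,f,g)\) by how they overlap, and bounding the mixed moments by Hölder, should give a rate like \(n^{-1/2}\) times a power of \(n^2p\). This rate tends to \(0\) under \(p\gg n^{-2}\), which yields the CLT.

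The main obstacle I expect is the variance lower bound uniformly over the whole range \(n^{-2}\ll p\le 1-\varepsilon\). This includes controlling the higher-order Hoeffding terms in the sparse regime and making the cancellation analysis at \(p\to1/\sqrt2\) precise enough to exclude it for the polytope count while confirming non-degeneracy for the triangulation count.
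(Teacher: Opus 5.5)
\textbf{Step 3 does not cover the full range $p\gg n^{-2}$.} You bound the second difference for two arcs sharing a node by a deterministic $O(n)$ (times $pq$). Plug this into \Cref{prop:2ndOrderPoincare}. The $\asymp n^3$ adjacent pairs $(e,g)$ alone then give only $\sqrt{B_5}\lesssim \var(K_{n,p})^{-1}\bigl(n^3 (pq)^{-2}(pq)^4 n^4\bigr)^{1/2}\simeq n^{-5/2}p^{-2}$. This does not tend to $0$ for $n^{-2}\ll p\lesssim n^{-5/4}$. The analogous bounds for $B_2$ and $B_4$, of order $n^{-2}p^{-3/2}$ and $n^{-3/2}p^{-1}$, also fail on parts of that range. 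So your claim that the rate vanishes whenever $p\gg n^{-2}$ is unsupported.

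The missing idea concerns $e=u_ev$, $g=u_gv$. Each of the $O(n)$ potential contributions to $\D_g\D_eK$ needs a $u_e$--$u_g$ path of length two to be present. Hence $|\D_g\D_eK_{n,p}|\le pq\,(10\,W_{n,p}(2)+8)$, with $W_{n,p}(2)$ a $\mathrm{Bin}(n-2,p^2)$ count as in \Cref{l:W123_bound_sep}. This gives $\EE[(\D_g\D_eK_{n,p})^4]\lesssim p^4q^4(1+n^4p^8)$, which is \Cref{DDbounds_SEP}. With this bound $B_3$ dominates, and one gets $d_K\lesssim (n\sqrt{pq}\,(1-\sqrt2p)^2)^{-1}$. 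Under the theorem's hypotheses this is a rate $(n^2p)^{-1/2}$, not ``$n^{-1/2}$ times a power of $n^2p$''.

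\textbf{Step 1 misstates the edge criterion, and $K_{n,p}$ is not local.} For a directed path $i\to j\to l$ you exclude only the $3$-cycle. But $(\ee_i-\ee_j)+(\ee_j-\ee_l)=(\ee_i-\ee_w)+(\ee_w-\ee_l)$. So the segment also fails to be an edge whenever $\{l,w\},\{w,i\}\in E$ for some fourth node $w$ (\Cref{prop:EdgeCharSEP}).

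The corresponding indicator therefore contains the factor $\prod_w(1-X_{lw}X_{wi})$ and depends on $\Theta(n)$ arcs. The mean picks up a factor $(1-p^2)^{n-3}$, and the representation $\sum_{|S|\le 4}f_S$ with finitely many types is false. These terms are of lower order. Still, anything built on ``finitely many local types'' has to account for them, including the higher-order Hoeffding bookkeeping and the counts behind $\D_eK$ and $\D_g\D_eK$. The paper does this in Case~2 of \Cref{variance_sep} and in \Cref{DDbounds_SEP}.

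For triangulations, ``forced by the $4$-cycle structure'' is too vague to compute anything. You need the explicit minimal-arc rule of \Cref{edges_SEP_triag} to get the leading mean $6\binom{n}{4}p^2(2-p^2)$. Your claim that the relevant derivative has no zero in $(0,1)$ rests on that mean.

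\textbf{On the variance, your route works better than you think, and it differs from the paper.} Your ``main obstacle'' is not one. By orthogonality, $\var(K_{n,p})\ge\sum_e\hat K(\{e\})^2$. By multilinearity in the arc probabilities and $S_n$-symmetry, $\hat K(\{e\})=\sqrt{pq}\,\binom{n}{2}^{-1}\frac{d}{dp}\EE[K_{n,p}]$ exactly, and no locality is needed for this.

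Up to lower-order corrections this gives $2n^6p^3q(1-2p^2)^2$ for the polytope and $2n^6p^3q(1-p^2)^2$ for triangulations. These are exactly the leading terms the paper extracts from Subcase~1.2 by enumerating covariances. It also explains the degeneracy at $1/\sqrt2$ as the maximizer of $p^2(1-p^2)$, which the paper leaves as an open heuristic question.

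No control of higher-order terms is needed for the lower bound. The matching upper bound $\var(K_{n,p})\lesssim n^6p^3q$ follows from Efron--Stein, $\var(K)\le\sum_e\EE[(\D_eK)^2]$, together with the add-one bound of \Cref{Dfbound_SEP} and its triangulation analogue. This is a cleaner argument than the paper's case-by-case covariance computation. For triangulations it also supplies the variance upper bound, which \Cref{variance_sepTriag} does not state.
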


The preceding theorem summarizes the principal asymptotic results of this paper in a compact form. The assumptions on the edge probability \(p\) are chosen to ensure that both the expectation and the variance of \(K_{n,p}\) diverge and that a nondegenerate Gaussian limit emerges. In the subsequent sections, we derive more refined asymptotic expansions for the expectation and the variance, and establish a quantitative central limit theorem with explicit rates of convergence. The following remark places these results into context and highlights the non-generic parameter regime that requires separate consideration.

\begin{rem}
\begin{enumerate}
\item
    The expectation asymptotics stated in \Cref{thm:Main} capture the leading-order behavior under the assumption that \( \limsup_{n\to\infty} p < 1 \). A more refined analysis, valid for the full range of parameters \(p\), is given in \Cref{expectation_sep} for the polytope edge count and in \Cref{lem:ExpectationTriangulation} for the number of edges in a unimodular triangulation. In both cases,
    \(
    \EE[K_{n,p}] \to \infty
    \)
    if and only if \( p \gg n^{-2} \), identifying the natural sparsity threshold for the emergence of nondegenerate fluctuations and justifying the lower bound on \(p\) imposed in the theorem.
    
\item
    For the variance, \Cref{thm:Main} states the generic order of magnitude under the stated assumptions. A detailed asymptotic expansion for the variance of the polytope edge count, valid for all admissible values of \(p\), is derived in \Cref{variance_sep}. This refinement shows that the leading term has the form $n^6 p^3 (1-p) \bigl( 1 - \sqrt{2}p \bigr)^2$ and vanishes as \(p\) approaches \(1/\sqrt{2}\), resulting in a slower growth of the variance. This type of degeneracy has no apparent analogue in classical subgraph counting problems in Erd\H{o}s--R\'enyi random graphs and is therefore not predicted by existing results in random graph theory. In contrast, for unimodular triangulations, \Cref{variance_sepTriag} provides a uniform lower bound on the variance in which this degeneracy phenomenon is not present.
    
\item
    \Cref{CLT_SymmetricEdge} establishes a quantitative central limit theorem for the polytope edge count, providing explicit rates of convergence in the Kolmogorov distance. Away from the critical value \(p = 1/\sqrt{2}\), the dominant term in the bound is of order \( (n\sqrt{p(1-p)})^{-1} \), comparable to rates obtained for classical subgraph counting problems in Erd\H{o}s--R\'enyi random graphs. As \(p\) approaches \(1/\sqrt{2}\), the rate deteriorates in accordance with the refined variance asymptotics in \Cref{variance_sep}. For unimodular triangulations, \Cref{thm:CLTtriangulations} yields a central limit theorem with a simpler rate, reflecting the absence of a corresponding variance suppression phenomenon in this setting.
\end{enumerate}
\end{rem}

The proof of \Cref{thm:Main} combines a detailed combinatorial-geometric analysis with probabilistic normal approximation. The expectation and variance asymptotics in part~\ref{thm:Main:a} are obtained by expanding the relevant edge counts into sums of indicator variables and by enumerating the contributing configurations in the underlying random graph. In particular, the variance is analyzed via a covariance decomposition in which terms are grouped according to the overlap pattern of the corresponding arcs, leading to the refined expansions in \Cref{expectation_sep,variance_sep} (and their triangulation analogues, \Cref{lem:ExpectationTriangulation,variance_sepTriag}). The special role of \(p=1/\sqrt{2}\) is already visible at the level of the covariance enumeration, where contributions of order \(n^6p^3(1-p)\) combine into a factor \((1-\sqrt{2}p)^2\), leading to a suppression of the leading variance term at $p=1/\sqrt{2}$. Part~\ref{thm:Main:b}, and in particular the quantitative bounds established later on, rely on the discrete Malliavin--Stein method for normal approximation on product spaces. The edge counts under consideration can be expressed as non-linear functions of independent Rademacher variables encoding the edge set of the Erd\H{o}s--R\'enyi graph. This allows us to study fluctuations through first- and second-order discrete gradients and to control the Kolmogorov distance to a Gaussian law via a discrete second-order Poincar\'e inequality, see \Cref{prop:2ndOrderPoincare}.

 These arguments are tailored to the case of edges and already illustrate both the combinatorial richness and the probabilistic structure of the model. It is therefore natural to ask to what extent our approach can be extended beyond this setting. A natural extension of our work would be to ask for \(k\)-faces of the symmetric edge polytope or of its unimodular triangulations for \(k\geq 2\). However, this would first require an explicit description of the \(k\)-faces of a symmetric edge polytope in terms of the graph, since such a description is not yet available. Although \(k\)-faces of unimodular triangulations admit graph-theoretic descriptions, these become substantially more complicated for \(k\geq 2\) than in the edge case, involving cycles of larger length, see \cite[Proposition 3.8]{Arithmetic_apects_SEP}. Another natural generalization would be to replace the Erd\H{o}s--R\'enyi graph by other random graph models. One possible example is the random geometric graph, where one samples random points in the plane and connects two of them whenever they are sufficiently close.

\section{Preliminaries}

The goal of this section is to provide the necessary background from probability theory and polytope theory. Readers who are familiar with one of these areas may wish to skip the corresponding subsection. Nevertheless, in order to make the article accessible to researchers from both communities and to keep the presentation self-contained, we include background material from both fields.

\subsection{Background from probability theory}\label{sec:backgroundprob}

Let \( n \geq 1 \) and let \( p_1, \ldots, p_n \in (0,1) \). We fix a probability space $(\Omega,\mathscr{A},\PP)$ that is rich enough to support all random objects considered throughout this paper. Let \( X_1, \ldots, X_n \) be independent \textit{Rademacher random variables} satisfying
\[
\PP(X_i = 1) = p_i \qquad \text{and} \qquad \PP(X_i = -1) = q_i
\]
for each \( i \in [n] \), where $q_i=1 - p_i
$. Let \( F = F(X_1, \ldots, X_n) \) be a real-valued, measurable function of these variables. For \( i \in [n] \), we define
\begin{align*}
    F^+_i &\coloneqq F(X_1, \ldots, X_{i-1}, +1, X_{i+1}, \ldots, X_n),\\
    F^-_i &\coloneqq F(X_1, \ldots, X_{i-1}, -1, X_{i+1}, \ldots, X_n).
\end{align*}
The \emph{discrete gradient} \( \D_i F \) of \( F \) at the \( i \)\textsuperscript{th} coordinate is defined as
\[
\D_i F \coloneqq \sqrt{p_iq_i} (F^+_i - F^-_i).
\]
Up to the scaling factor \( \sqrt{p_iq_i} \), this quantity measures the sensitivity of \( F \) to flipping the \( i \)\textsuperscript{th} variable from \( +1 \) to \( -1 \), or vice versa.

We will also require an iterated version of this operator. To simplify notation, for \( i,j \in [n] \) we define
\begin{align*}
    F_{j,i}^{+,+} \coloneqq (F_j^+)_i^+, \qquad
    F_{j,i}^{+,-} \coloneqq (F_j^+)_i^-, \qquad
    F_{j,i}^{-,+} \coloneqq (F_j^-)_i^+, \qquad
    F_{j,i}^{-,-} \coloneqq (F_j^-)_i^-.
\end{align*}
The \emph{second-order discrete gradient} $\D_i\D_j F$ can then be written as
\begin{align*}
    \D_i\D_j F
    \coloneqq \D_i ( \D_j F )
    &= \sqrt{p_iq_i} \bigl(
            (\D_j F)^+_i - (\D_j F)^-_i
        \bigr)
        \\
    &= \sqrt{p_iq_i p_jq_j} \bigl(
              F_{j,i}^{+,+}
            - F_{j,i}^{-,+}
            - F_{j,i}^{+,-}
            + F_{j,i}^{-,-}
        \bigr).
\end{align*}
Note that if $i=j$, then $\D_i\D_j F = 0$ by definition.

The discrete gradients \( \D_i F \) and the second-order discrete gradients $\D_i\D_j F$ for $ i, j \in [n] $
are essential tools for quantifying the proximity of the random variable \( F \) to a Gaussian distribution. To measure the distance between two real-valued random variables \( X \) and \( Y \), assumed to be defined on the common probability space $(\Omega,\mathscr{A},\PP)$, we use the \emph{Kolmogorov distance}, defined by
\[
d_K(X,Y) \coloneqq \sup_{z \in \RR} \left| \PP(X \leq z) - \PP(Y \leq z) \right|.
\]
The following result is a simplified version of Theorem 4.1 in \cite{zgstheorem} and serves as the main technical tool for establishing our central limit theorems. It was obtained using the discrete Malliavin--Stein method on product probability spaces as developed in \cite{KRT16aihp,KRT17aop,NPR10ejp}. In this framework, the discrete gradients \( \D_i F \) play the role of Malliavin derivatives, measuring the local sensitivity of \( F \) with respect to the underlying Rademacher variables, while second-order discrete gradients capture interactions between different coordinates, see \cite{Privault09lnm}. Stein’s method for normal approximation then allows one to translate quantitative control of these derivatives into explicit bounds on distributional distances, such as the Kolmogorov distance. Second-order Poincaré-type inequalities of this form are by now a well-established and powerful tool for proving quantitative central limit theorems in discrete settings. 

\begin{proposition}\label{prop:2ndOrderPoincare}
    Let \( F = F(X_1, \ldots, X_n) \) be a real-valued, measurable function of \( n \geq 1 \) independent Rademacher random variables \( X_1, \ldots, X_n \), with \( \PP(X_i = +1)=p_i\) and \(\PP(X_i = -1)=q_i \) for all \( i \in [n] \). Assume that \( \EE[F] = 0 \) and \( \EE[F^2] = 1 \), and define
    \begin{align*}
        B_1 &\coloneqq \sum_{j,k,\ell=1}^n \sqrt{ \EE[ (\D_j F)^2 (\D_k F)^2 ] } \cdot \sqrt{ \EE[ (\D_\ell \D_j F)^2 (\D_\ell \D_k F)^2 ] }, \\
        B_2 &\coloneqq \sum_{j,k,\ell=1}^n \frac{1}{p_\ell q_\ell} \, \EE[ (\D_\ell \D_j F)^2 (\D_\ell \D_k F)^2 ], \\
        B_3 &\coloneqq \sum_{k=1}^n \frac{1}{p_k q_k} \, \EE[ (\D_k F)^4 ], \\
        B_4 &\coloneqq \sum_{k,\ell=1}^n \frac{1}{p_k q_k}  \sqrt{ \EE[ (\D_k F)^4 ] } \cdot \sqrt{ \EE[ (\D_\ell \D_k F)^4 ] }, \\
        B_5 &\coloneqq \sum_{k,\ell=1}^n \frac{1}{p_k q_k p_\ell q_\ell} \, \EE[ (\D_\ell \D_k F)^4 ].
    \end{align*}
    Then, the Kolmogorov distance between \( F \) and a standard Gaussian random variable \( N \sim \mathcal{N}(0,1) \) satisfies
    \begin{align*}
        d_K(F, N)
        &\leq \frac{\sqrt{15}}{2} \sqrt{B_1} + \frac{\sqrt{3}}{2} \sqrt{B_2} + 2 \sqrt{B_3} + 2 \sqrt{6} \sqrt{B_4} + 2 \sqrt{3} \sqrt{B_5}.
    \end{align*}
\end{proposition}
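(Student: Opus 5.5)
We do not redo the Stein-method analysis. We specialize \cite[Theorem~4.1]{zgstheorem} to the situation of \Cref{prop:2ndOrderPoincare}. That theorem bounds \(d_K(F,N)\), for a centred and normalized functional of independent Rademacher variables with arbitrary success probabilities \(p_i\), by a sum of terms of the same shape as \(B_1,\dots,B_5\). The only differences are that they may contain the pseudo-inverse \(L^{-1}\) of the Ornstein--Uhlenbeck generator, or weights involving \(p_i,q_i\) in a less compact form. The plan is to recall that bound and rewrite each term. We will check that the rewriting only enlarges the right-hand side, so the stated inequality follows.

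For orientation, the cited bound comes from three ingredients.
\begin{itemize}
\item The Stein equation for the indicator \(\indicator_{(-\infty,z]}\) has a solution \(f_z\) with \(\|f_z\|_\infty\le \sqrt{2\pi}/4\) and \(\|f_z'\|_\infty\le 1\).
\item The discrete integration-by-parts formula is \(\EE[F f(F)] = \EE\bigl[\sum_k \D_k f(F)\,(-\D_k L^{-1}F)\bigr]\).
\item A first-order Taylor expansion of \(\D_k f_z(F)\) produces a remainder that is not smooth. It is handled with the monotonicity trick of Eichelsbacher--Thäle and Krokowski--Reichenbachs--Thäle.
\end{itemize}
These give
\[
d_K(F,N)\le \EE\Bigl|1-\sum_k \D_kF\,(-\D_kL^{-1}F)\Bigr| + \text{(remainder terms in } (\D_kF)^2|\D_kL^{-1}F|/\sqrt{p_kq_k}\text{)}.
\]

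The first term is bounded by a variance. We apply the Poincaré inequality \(\var(G)\le \sum_\ell \EE[(\D_\ell G)^2]\) to \(G=\sum_k \D_kF(-\D_kL^{-1}F)\). The discrete product rule for \(\D_\ell\) of a product then yields the cross terms \((\D_\ell\D_jF)(\D_\ell\D_kF)\) and \((\D_jF)(\D_kF)\). Together with Cauchy--Schwarz, this produces \(B_1\) and \(B_2\). The factor \(1/(p_\ell q_\ell)\) in \(B_2\) comes from the product rule correction term, which carries \(X_\ell/\sqrt{p_\ell q_\ell}\)-type weights. The remainder terms give \(B_3,B_4,B_5\) in the same way, after Cauchy--Schwarz and a second application of the Poincaré inequality to \(\sum_k (\D_kF)^2|\D_k L^{-1}F|\).

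The key step, and in my view the main obstacle, is to eliminate \(L^{-1}\). I would use Mehler's formula, \(-\D_kL^{-1}F=\int_0^\infty e^{-t}P_t\D_kF\,dt\). The Ornstein--Uhlenbeck semigroup \(P_t\) is a Markov operator, so by Jensen \(\EE[|P_t\D_kF|^r]\le \EE[|\D_kF|^r]\). The commutation \(\D_\ell P_t = e^{-t}P_t\D_\ell\) gives the same estimate for \(\D_\ell\D_kL^{-1}F\) in terms of \(\D_\ell\D_kF\), with an extra factor \(\int_0^\infty e^{-2t}dt=1/2\) at second order. Hence every moment of \(\D L^{-1}F\) or \(\D\D L^{-1}F\) appearing in \cite[Theorem~4.1]{zgstheorem} can be replaced by the corresponding moment of \(\D F\) or \(\D\D F\), which increases the bound. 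The care needed is in mixed moments such as \(\EE[(\D_\ell\D_jF)^2(\D_\ell\D_kL^{-1}F)^2]\). There one applies Hölder before Jensen, or uses the joint Mehler representation, so as to land exactly on the symmetric quantities in \(B_1\) and \(B_2\).

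Finally, we collect the numerical constants. These are \(\tfrac{\sqrt{15}}{2}\), \(\tfrac{\sqrt3}{2}\), \(2\), \(2\sqrt6\) and \(2\sqrt3\), obtained from the constants in \cite{zgstheorem} combined with the factors picked up above. We then drop any additional lower-order terms present in the general statement that are dominated by \(B_1,\dots,B_5\). In particular, the condition \(\EE[F^2]=1\) removes the term \(|1-\var(F)|\).
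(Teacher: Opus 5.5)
Your top-level route is the paper's: the proposition is \cite[Theorem~4.1]{zgstheorem} specialized. The paper's entire justification is the remark that, for $F$ depending on only finitely many Rademacher variables, the extra technical assumptions of that theorem hold automatically.

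The step you single out as the key one, however, rests on a wrong picture of the cited result, and it would not work as sketched. Theorem~4.1 there is already a second-order Poincar\'e inequality. It is stated with exactly these $B_1,\dots,B_5$ and exactly the constants $\tfrac{\sqrt{15}}{2},\tfrac{\sqrt3}{2},2,2\sqrt6,2\sqrt3$. So there is no $L^{-1}$ to remove and there are no constants to recombine. If you genuinely picked up Mehler factors such as $\int_0^\infty e^{-2t}\,dt=\tfrac12$, you would not land on these constants.

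Had the elimination been necessary, your method could not produce the mixed moments in $B_1$ and $B_2$. Jensen for the Markov operators $P_t$ controls the moments of each factor separately. H\"older followed by Jensen therefore gives $\sqrt{\EE[(\D_\ell\D_jF)^4]\,\EE[(\D_\ell\D_kF)^4]}$. This dominates $\EE[(\D_\ell\D_jF)^2(\D_\ell\D_kF)^2]$, so you would prove a weaker inequality than the one stated.

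Nor can a ``joint Mehler'' argument compare $\EE[(\D_\ell\D_jF)^2(\D_\ell\D_kL^{-1}F)^2]$ with $\EE[(\D_\ell\D_jF)^2(\D_\ell\D_kF)^2]$ termwise, even up to a constant. Take $p_i=q_i=\tfrac12$ and $F=X_1X_2(1+X_4)+X_1X_3(1-X_4)$. Then $\D_1\D_2F=1+X_4$ and $\D_1\D_3F=1-X_4$, so the second quantity vanishes. Since $L^{-1}$ multiplies Walsh monomials of degree $d$ by $-1/d$, one gets $\D_1\D_3L^{-1}F=-\tfrac12+\tfrac13X_4$. Hence $\EE[(\D_1\D_2F)^2(\D_1\D_3L^{-1}F)^2]=\tfrac1{18}>0$.

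What actually needs checking, and what your proposal omits, is the hypothesis side of the citation. Regard $F$ as a functional of an infinite Rademacher sequence that depends only on its first $n$ coordinates. Then $\D_kF=0$ and $\D_\ell\D_kF=0$ whenever $k>n$ or $\ell>n$, and all sums in \cite{zgstheorem} reduce to sums over $[n]$. Since $F$ takes only finitely many values, membership in the domain of $\D$ and all integrability and summability conditions imposed there hold trivially. With this observation the proposition follows at once, and the Stein-method sketch and the $L^{-1}$ discussion can be dropped.
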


\begin{rem}
    We present here a version of Theorem 4.1 in \cite{zgstheorem} restricted to functions \( F \) depending on only \textit{finitely} many Rademacher random variables. In this setting, the additional technical assumptions required in \cite{zgstheorem} are automatically satisfied.
\end{rem}

Suppose we are given a sequence \( (F_n)_{n \geq 1} \) of random variables, where each \( F_n \) depends on \( n \) independent Rademacher random variables $X_1,\ldots,X_n$ as above. Then \Cref{prop:2ndOrderPoincare} implies that, in order to establish a central limit theorem for \( F_n \) as $n\to\infty$, it suffices to control the first- and second-order discrete gradients and to verify that the quantities \( B_1, \ldots, B_5 \), which now depend on $n$, converge to zero. Moreover, the proposition provides an explicit rate of convergence in Kolmogorov distance, which we exploit in the derivation of our main results.

\subsection{Background from polytope theory}\label{sec:BackgroundPolytopes}

In this section, we provide the necessary background on polytopes. For more details, we refer the reader to \cite{Beck} and \cite{Ziegler}. 

A \emph{lattice polytope} \( P \subset \RR^d \) is the convex hull of finitely many points in \( \ZZ^d \), that is,
\[
P = \conv(x_1, \dots, x_n), \qquad \text{for } x_1, \dots, x_n \in \ZZ^d.
\]
A \emph{supporting hyperplane} of \( P \) is a hyperplane such that \( P \) is contained in exactly one of its two closed halfspaces. A \emph{face} of \( P \) is the intersection of \( P \) with a supporting hyperplane. Throughout this paper, we do not regard \( P \) itself as a face. The \emph{dimension} of a face is defined as the dimension of its affine hull. Faces of dimension zero and one are called \emph{vertices} and \emph{edges}, respectively.

A \emph{(lattice) triangulation} of a $d$-dimensional lattice polytope \( P \) is a subdivision of \( P \) into $d$-dimensional lattice simplices such that their union equals \( P \), and any two simplices intersect in a (possibly empty) common face. A triangulation is called \emph{unimodular} if all of its $d$-simplices have volume
\(\tfrac{1}{d!}\)
with respect to the usual Lebesgue measure, and hence are lattice simplices of minimal possible volume. Equivalently, each simplex is lattice equivalent to the standard simplex spanned by the origin together with the standard unit vectors \(\ee_i\), \(1 \leq i \leq d\). If a unimodular triangulation of \( P \) exists, then necessarily every lattice point of \( P \) appears as a vertex of at least one simplex of the triangulation. Any triangulation \( \mathcal{T} \) naturally gives rise to a simplicial complex \( \Delta_{\mathcal{T}} \) consisting of all faces of the simplices in \( \mathcal{T} \), that is,
\[
\Delta_{\mathcal{T}}
= \{ F~ : ~F \text{ is a face of } G \text{ for some } G \in \mathcal{T} \} \cup \mathcal{T}.
\]
In the following, we do not always distinguish between the triangulation \( \mathcal{T} \) and the associated simplicial complex \( \Delta_{\mathcal{T}} \). In particular, when referring to $k$-dimensional faces (for example, edges) of \( \mathcal{T} \), we implicitly mean the corresponding faces of \( \Delta_{\mathcal{T}} \). Although the number of $k$-dimensional faces of a triangulation generally depends on the chosen triangulation, it is known that all unimodular triangulations of a given lattice polytope~--~if they exist~--~have the same number of $k$-dimensional faces, see, for instance, \cite{Sturmfels}. Consequently, the number of edges of a unimodular triangulation of \( P \) is a well-defined quantity.

Given a graph \( G = (V,E) \), we recall that the symmetric edge polytope associated to \( G \) is defined by
\[
\SEP_G \coloneqq \conv\bigl\{ \pm (\ee_i - \ee_j)~ :~ \{i,j\} \in E \bigr\} \subset \RR^{V},
\]
where \( (\ee_i)_{i \in V} \) denotes the standard basis of \( \RR^{V} \). For each arc \( \{i,j\} \in E \), the polytope \( \SEP_G \) contains the two vertices
\[
\ee_{i,j} \coloneqq \ee_i - \ee_j
\quad \text{and} \quad
\ee_{j,i} \coloneqq \ee_j - \ee_i,
\]
which we refer to as an \emph{antipodal pair}. We interpret the vertex \( \ee_{i,j} \) as the directed arc \( i \to j \), thereby identifying the vertices of \( \SEP_G \) with the directed arcs of the underlying graph \( G \). From this perspective, an undirected arc of \( G \) gives rise to a pair  of directed arcs, each corresponding to a vertex of $\SEP_G$.

The following result, taken from \cite[Theorem 2.2]{gammavectorsymmetricedgepoyltope}, provides a purely combinatorial characterization of the edges of a symmetric edge polytope in terms of the subgraph structure of the underlying graph. This characterization is especially well suited for probabilistic analysis when \( G \sim G_{n,p} \).

\begin{proposition}\label{prop:EdgeCharSEP}
    Let \( G \) be a graph. Two vertices of \( \SEP_G \) are connected by an edge if and only if they are not antipodal and there exists no directed $3$- or $4$-cycle in \( G \) containing both of the corresponding directed arcs.
\end{proposition}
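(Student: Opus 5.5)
The plan is to treat the two implications separately, working directly with the vertex description $\SEP_G=\conv\{\ee_{i,j}\}$. For necessity I exhibit an explicit convex-combination obstruction. For sufficiency I construct a linear functional that attains its maximum over $\SEP_G$ exactly at the two given vertices. I will use the standard fact that a segment $[u,v]$ between two vertices is an edge if and only if some linear functional $\ell$ satisfies $\ell(u)=\ell(v)>\ell(w)$ for every other vertex $w$. Equivalently, it is not an edge as soon as the midpoint $\tfrac12(u+v)$ is a convex combination of points of $\SEP_G$ not all lying on $[u,v]$: any face containing the midpoint must then contain those points.

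\emph{Necessity.} If $u=\ee_{a,b}$ and $v=\ee_{b,a}$ are antipodal, their midpoint is $0$. Since $\SEP_G$ is centrally symmetric, $0$ is also the midpoint of every other antipodal pair, and no proper face contains $0$.

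For a directed $3$-cycle $a\to b\to c\to a$ containing $u=\ee_{a,b}$ and $v=\ee_{b,c}$, we have $u+v=\ee_a-\ee_c$. Hence
\[
\tfrac12(u+v)=\tfrac34\,\ee_{a,c}+\tfrac14\,\ee_{c,a},
\]
a convex combination of two other vertices, both available because $\{a,c\}\in E$.

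For a directed $4$-cycle $a\to b\to c\to d\to a$, the four arc vectors sum to $0$. So if $u,v$ are any two of these arcs, consecutive or opposite, then $u+v$ equals the sum of the reversals of the remaining two arcs. This expresses $\tfrac12(u+v)$ as the midpoint of two further vertices of $\SEP_G$. In all three situations $[u,v]$ is not an edge.

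\emph{Sufficiency.} Here I look for a height function $h\colon V\to\RR$ and set $\ell(x)=\langle h,x\rangle$, so that $\ell(\ee_{i,j})=h_i-h_j$. The requirement becomes $|h_i-h_j|\le 1$ on every arc, with $h_i-h_j=1$ exactly for the two directed arcs $u,v$. I split into cases according to how $u$ and $v$ share nodes.

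\begin{itemize}
\item \emph{Common tail or common head}, e.g.\ $u=\ee_{a,b}$, $v=\ee_{a,c}$. Take $h_a=1$, $h_b=h_c=0$, and $h=\tfrac12$ elsewhere. The difference between $b$ and $c$ is $0$, so this case needs no hypothesis beyond non-antipodality.
\item \emph{Head of one is the tail of the other}, $u=\ee_{a,b}$, $v=\ee_{b,c}$. No directed $3$-cycle means $\{a,c\}\notin E$. Take $h_a=1$, $h_b=0$, $h_c=-1$, and assign values strictly between $-\tfrac12$ and $\tfrac12$ to all remaining nodes. The only arc that could violate the constraint is $\{a,c\}$, which is absent.
\item \emph{Disjoint arcs}, $u=\ee_{a,b}$, $v=\ee_{c,d}$. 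No directed $4$-cycle means that $\{b,c\}$ and $\{d,a\}$ are not both arcs; by symmetry assume $\{d,a\}\notin E$. Take
\[
h_a=1,\quad h_b=0,\quad h_c=\tfrac12,\quad h_d=-\tfrac12,
\]
and give every other node the value $\tfrac14$. Then $h_c-h_b=\tfrac12$, $|h_a-h_c|=|h_b-h_d|=\tfrac12$, and all differences involving the remaining nodes are at most $\tfrac34$. The only pair with difference exceeding $1$ is $\{a,d\}$, which is not an arc.
\end{itemize}

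The main obstacle is this sufficiency construction, and in particular the disjoint case. There the naive choice $h=1$ on the tails and $0$ on the heads fails, because any arc between a head and the other tail already attains the value $1$. One must stagger the heights as above and then check that the absence of directed $4$-cycles leaves exactly one forbidden pair. Writing out the three sharing patterns carefully, including the reversed orientations, which follow by the symmetry $h\mapsto -h$, and verifying all arcs among the four special nodes, should complete the proof.
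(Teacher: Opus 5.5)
\textbf{The gap is in the sufficiency case $u=\ee_{a,b}$, $v=\ee_{b,c}$.} Your claim that ``the only arc that could violate the constraint is $\{a,c\}$'' is false. Take a remaining node $x$.
\begin{itemize}
\item An arc $\{a,x\}$ gives $\ell(\ee_{a,x})=1-h_x$, which is $\geq 1$ unless $h_x>0$.
\item An arc $\{x,c\}$ gives $\ell(\ee_{x,c})=1+h_x$, which is $\geq 1$ unless $h_x<0$.
\end{itemize}
So arbitrary values in $(-\tfrac12,\tfrac12)$ do not work. For instance, $h_x=0$ already ties with $u,v$. If some $x$ is adjacent to both $a$ and $c$, no choice of $h_x$ works at all.

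That situation is exactly a directed $4$-cycle $a\to b\to c\to x\to a$ through $u$ and $v$. Your own necessity argument, which covers arcs that are ``consecutive or opposite'' on a $4$-cycle, shows that $[u,v]$ is then not an edge. As written, this case uses only the absence of a $3$-cycle, so it ``proves'' a false statement. A counterexample is $G$ equal to a $4$-cycle, where $\tfrac12(\ee_{a,b}+\ee_{b,c})=\tfrac12(\ee_{a,x}+\ee_{x,c})$. Your closing remark that $4$-cycles only matter in the disjoint case reflects the same oversight.

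\textbf{The repair is short.} The absence of a directed $4$-cycle through $u,v$ means no node $x\notin\{a,b,c\}$ is adjacent to both $a$ and $c$. So you may set
\begin{itemize}
\item $h_x=\tfrac14$ for neighbours of $a$,
\item $h_x=-\tfrac14$ for neighbours of $c$,
\item $h_x=0$ for all other remaining nodes.
\end{itemize}
Then every constraint holds with $u,v$ the only arcs attaining $1$:
\begin{itemize}
\item arcs $\{a,x\}$ and $\{x,c\}$ give $\tfrac34$;
\item arcs $\{b,x\}$ give at most $\tfrac14$ in absolute value;
\item arcs among the remaining nodes give at most $\tfrac12$;
\item $\{a,c\}\notin E$ by the no-$3$-cycle hypothesis.
\end{itemize}

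\textbf{Everything else is correct.} The necessity half holds, as do the common-tail/common-head case and the disjoint case of sufficiency; your staggered heights there check out against all arcs among $a,b,c,d$. The paper itself gives no proof of this proposition and simply quotes it from \cite[Theorem 2.2]{gammavectorsymmetricedgepoyltope}. With the fix above, your argument is a self-contained, elementary replacement.
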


We remark that if \( G \) consists of a single arc, then \( \SEP_G \) is a line segment and hence one-dimensional. In particular, since we do not consider a polytope to be a face of itself, the symmetric edge polytope of such a graph does not have any edges.

Beyond the structure of \( \SEP_G \) itself, we deal with unimodular triangulations of symmetric edge polytopes. It was shown in \cite[Proposition 3.8]{Arithmetic_apects_SEP}, via the construction of a squarefree Gr\"obner basis, that \( \SEP_G \) admits a unimodular triangulation. The triangulation produced there depends on the choice of a total order on the arcs of \( G \). From this construction, one can extract a precise combinatorial description of the edges appearing in a specific unimodular triangulation.

\begin{proposition}\label{edges_SEP_triag}
    Let \( G \) be a graph and let \( \prec \) be a total order on the arcs of \( G \).
    Then there exists a unimodular triangulation \( \mathcal{T} \) of \( \SEP_G \), on the vertices of $\SEP_G$ and the origin, such that two vertices \( \mathbf{x} \) and \( \mathbf{y} \) of \( \mathcal{T} \) form an edge of \( \mathcal{T} \) if and only if one of the following conditions holds:
    \begin{conditions}
        \item\label{cond:origin} one of \( \mathbf{x} \) and \( \mathbf{y} \) is the origin \( \mathbf{0} \); or
        \item there is no directed $3$-cycle in \( G \) containing the directed arcs corresponding to \( \mathbf{x} \) and \( \mathbf{y} \), and if there exists a directed $4$-cycle containing these arcs, then the minimal arc of this cycle with respect to \( \prec \) is one of the arcs corresponding to \( \mathbf{x} \) or \( \mathbf{y} \).
    \end{conditions}
\end{proposition}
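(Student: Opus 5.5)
The plan is to read off the edges of the triangulation from the Gröbner basis in \cite[Proposition 3.8]{Arithmetic_apects_SEP}, via Sturmfels' correspondence between squarefree initial ideals and regular unimodular triangulations \cite{Sturmfels}. Let \(A=\{\mathbf{0}\}\cup\{\ee_{i,j}:\{i,j\}\in E\}\), homogenized by appending a coordinate equal to \(1\). Its toric ideal \(I_A\) lives in the polynomial ring \(K[z,x_{ij}]\), where \(z\) corresponds to the origin and \(x_{ij}\) to the directed arc \(i\to j\).

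First I fix the term order of \cite{Arithmetic_apects_SEP}: a reverse lexicographic order in which \(z\) is the smallest variable and the \(x_{ij}\) are ordered compatibly with \(\prec\) on the underlying arcs. I then quote its reduced Gröbner basis. It consists of:
\begin{itemize}
\item the quadrics \(x_{ij}x_{ji}-z^2\) for antipodal pairs;
\item the quadrics \(x_{ij}x_{jk}-x_{ik}z\) for each directed \(3\)-cycle \(i\to j\to k\to i\);
\item the quadrics \(x_{ab}x_{cd}-x_{a'b'}x_{c'd'}\) coming from each directed \(4\)-cycle, which express \(\ee_{a,b}+\ee_{c,d}=\ee_{a',b'}+\ee_{c',d'}\);
\item possibly further elements of higher degree.
\end{itemize}
A directed \(4\)-cycle \(i\to j\to k\to l\to i\) yields two quadrics. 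One pairs the opposite arcs, \(x_{ij}x_{kl}-x_{il}x_{kj}\). The other pairs the consecutive arcs, \(x_{ij}x_{jk}-x_{il}x_{lk}\). In each of them, revlex makes the monomial \emph{not} containing the \(\prec\)-minimal arc of the cycle the leading one.

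The key facts are that all leading terms are squarefree, apart from the antipodal ones, and that the initial ideal defines a unimodular triangulation \(\mathcal{T}\) using exactly the points of \(A\). Unimodularity follows from squarefreeness by \cite{Sturmfels}, and this is the content of \cite[Prop.~3.8]{Arithmetic_apects_SEP}, which I take as given. The faces of \(\mathcal{T}\) are precisely the subsets \(S\subseteq A\) with \(\prod_{s\in S}x_s\notin\sqrt{\mathrm{in}(I_A)}\). In particular, \(\{\mathbf{x},\mathbf{y}\}\) is an edge iff the degree-2 squarefree monomial \(x_{\mathbf x}x_{\mathbf y}\) is not in the radical of the initial ideal.

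Second, I check which such monomials lie in the radical. Any generator of degree at least three cannot divide a squarefree quadric, and \(z^2\) never appears as a leading term. Hence \(z\,x_{ij}\notin\sqrt{\mathrm{in}(I_A)}\) for all arcs, which gives condition~\ref{cond:origin}. For two nonantipodal directed arcs, the monomial \(x_{\mathbf x}x_{\mathbf y}\) lies in the initial ideal iff it is the leading term of some quadric listed above. By the list, this happens iff one of two things holds: the two arcs lie on a common directed \(3\)-cycle, or they lie on a common directed \(4\)-cycle whose \(\prec\)-minimal arc is neither of them. Negating this gives condition~(b). Antipodal pairs are non-edges because \(x_{ij}x_{ji}\) is a leading term; geometrically, the segment between them passes through the interior lattice point \(\mathbf{0}\). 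I read this case into the statement, with the antipodal pair degenerately counted as a "cycle".

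The main obstacle is confirming the leading-term bookkeeping for \(4\)-cycles. Each consecutive or opposite pair of arcs in a \(4\)-cycle must be paired with its complementary pair. Under the chosen order, the leading monomial must be exactly the one avoiding the minimal arc. Moreover, no other degree-2 leading monomials may arise, for instance from \(S\)-pair reductions. For this I would rely on the reducedness of the basis in \cite{Arithmetic_apects_SEP} and do a short direct check on the \(4\)-cycle case.
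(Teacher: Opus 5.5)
Your proposal is correct and follows the paper's route. The paper gives no independent argument: it invokes the squarefree Gr\"obner basis of \cite[Proposition~3.8]{Arithmetic_apects_SEP} and reads the edges off the quadratic part of its initial ideal, exactly as you do. Your observation that antipodal pairs must be excluded from condition (b), as in \Cref{prop:EdgeCharSEP}, is a legitimate correction of the statement as printed.

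Two small fixes. First, the antipodal leading terms $x_{ij}x_{ji}$ are themselves squarefree, so all leading terms are squarefree, which is what Sturmfels' criterion needs. Second, your worry about $S$-pair reductions and reducedness is unnecessary. Once the quoted set is a Gr\"obner basis, its leading terms generate the initial ideal, and there are no linear generators. So a quadratic monomial lies in the initial ideal only if it is the leading term of one of the listed quadrics.
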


\Cref{cond:origin} reflects the fact that the triangulation $\mathcal{T}$ is a cone over a triangulation of the boundary of $P$. We note that this can always be achieved for symmetric edge polytopes since those are known to be reflexive. This structural description will be crucial for our analysis of the number of edges in unimodular triangulations of random symmetric edge polytopes \( \SEP_{n,p} \).

\section[Proof of Theorem 1.1 for symmetric edge polytopes]{Proof of \Cref{thm:Main} for symmetric edge polytopes}\label{sec:SEP-proof}

In this section, we present the proof of \Cref{thm:Main} in the case where the random variable \(K_{n,p}\) is the number of edges of the random symmetric edge polytope \(\SEP_{n,p}\) arising from the Erd\H{o}s--R\'enyi random graph \(G_{n,p}\). Our goal is to establish both, the asymptotic formulas for \(\EE[K_{n,p}]\) and \(\VV(K_{n,p})\), and to prove that the appropriately normalized random variable \(K_{n,p}\) converges in distribution to a standard Gaussian one.

\subsection{Expectation}
We begin by deriving an exact expression for \(\EE[K_{n,p}]\), the expected number of edges of the random symmetric edge polytope \(\SEP_{n,p}\). 

\begin{thm}\label{expectation_sep}
    Let \(K_{n,p}\) denote the number of edges of \(\SEP_{n,p}\).
    Then,
    \[
        \EE[K_{n,p}]
        \;=\;
            12 \binom{n}{4} \, p^2 \bigl(1 - p^2\bigr)
        \;+\;
            6 \binom{n}{3} \, p^2 \Bigl( 1 + (1 - p)\,\bigl(1 - p^2\bigr)^{n-3} \Bigr)
        .
    \]
    As \(n \to \infty\), this yields
    \[
        \EE[K_{n,p}]
        \;\simeq\; n^4 p^2 (1-p) + n^3 p^2
        \;\simeq \; \begin{cases}
            n^3 p^2, & \text{if } 1-p \ll n^{-1}, \\
            n^4 p^2 (1-p), & \text{if } 1-p \gtrsim n^{-1}.
        \end{cases}
    \]
    In particular, we have
    \( \EE[K_{n,p}] \;\to\; \infty\)
    if and only if
    \( p \gg n^{-2} \).
\end{thm}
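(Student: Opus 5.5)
The plan is to use linearity of expectation together with the combinatorial characterization of polytope edges in \Cref{prop:EdgeCharSEP}. Vertices of \(\SEP_{n,p}\) are the directed arcs of \(G_{n,p}\). A pair of distinct, non-antipodal directed arcs is an edge exactly when no directed \(3\)- or \(4\)-cycle of \(G_{n,p}\) contains both. So I will write \(K_{n,p}\) as a sum of indicators over unordered pairs of non-antipodal directed arcs of the complete graph \(K_n\). Each indicator says that both underlying arcs are present and that no such cycle exists. I will split the pairs according to whether the two underlying arcs are disjoint (spanning \(4\) nodes) or share a node (spanning \(3\) nodes). Antipodal pairs come from a single arc and are excluded by \Cref{prop:EdgeCharSEP}, which also disposes of the degenerate one-arc case mentioned after that proposition.

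\emph{Disjoint arcs.} Choose \(4\) nodes (\(\binom n4\) ways), one of the \(3\) perfect matchings on them, and orientations \(a\to b\), \(c\to d\) (\(4\) ways). This gives \(12\binom n4\) pairs. Both arcs are present with probability \(p^2\), and a directed \(3\)-cycle cannot contain two disjoint arcs. I will check that the only directed \(4\)-cycle containing both is \(a\to b\to c\to d\to a\): after \(b\) the cycle must enter \(c\), because \(c\to d\) has to be used. That cycle requires the two further arcs \(\{b,c\}\) and \(\{d,a\}\), which are independent of the first two. Hence each such pair contributes \(p^2(1-p^2)\).

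\emph{Arcs sharing a node.} Choose \(3\) nodes, the common node \(j\) among them, and one of the \(4\) orientations, giving \(12\binom n3\) pairs. In \(6\binom n3\) of them both arcs point into \(j\) or both point out of \(j\). These pairs lie on no directed cycle, since every node of a directed cycle has in- and out-degree one, so they contribute \(p^2\) each. The remaining \(6\binom n3\) pairs form a directed path \(i\to j\to k\). Here the \(3\)-cycle requires the arc \(\{i,k\}\), and a \(4\)-cycle \(i\to j\to k\to l\to i\) requires both \(\{k,l\}\) and \(\{l,i\}\) for some \(l\notin\{i,j,k\}\). These events involve disjoint independent arc sets. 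The pair is therefore an edge with probability \(p^2(1-p)(1-p^2)^{n-3}\). Summing the three contributions gives the exact formula.

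For the asymptotics I will use \(1-p^2=(1-p)(1+p)\simeq 1-p\). The second summand lies between \(6\binom n3p^2\) and \(12\binom n3p^2\), which gives \(\EE[K_{n,p}]\simeq n^4p^2(1-p)+n^3p^2\). The two cases follow by comparing \(n(1-p)\) with \(1\). For divergence, I will split into two cases. If \(1-p\gtrsim n^{-1}\), then \(\EE[K_{n,p}]\simeq n^4p^2(1-p)\ge n^3p^2\cdot n(1-p)\gtrsim n^3p^2\), and \(n^4p^2(1-p)\to\infty\) iff \(p\gg n^{-2}\). Indeed, if \(p\) is bounded away from \(1\) this reduces to \(n^4p^2\to\infty\), and otherwise \(n^3p^2\to\infty\) already. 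If \(1-p\ll n^{-1}\), then \(p\to1\) and \(n^3p^2\to\infty\) trivially. The step needing most care is the cycle enumeration: checking that each pair lies in exactly one potential \(4\)-cycle pattern (disjoint case) or in the stated family indexed by \(l\) (path case), and that the absence events are independent so they factor.
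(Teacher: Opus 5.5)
Your proposal is correct and follows the paper's proof essentially step for step. It uses linearity of expectation over non-antipodal pairs of directed arcs via \Cref{prop:EdgeCharSEP}, the same three classes (disjoint arcs with their unique completing $4$-cycle, co-oriented arcs at a common node, and directed $2$-paths with the $3$-cycle and the $n-3$ independent $4$-cycle completions), the same counts and probabilities, and $1-p^2\simeq 1-p$ for the asymptotics.

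Your treatment of the divergence claim is more explicit than the paper's, though your case split into $1-p\gtrsim n^{-1}$ and $1-p\ll n^{-1}$ does not cover all sequences. It is cleaner to note directly that $\min\{n^4p^2,n^3\}\lesssim\EE[K_{n,p}]\lesssim n^4p^2$, from which $\EE[K_{n,p}]\to\infty$ if and only if $p\gg n^{-2}$ is immediate.
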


\begin{proof}
    To compute \(\EE[K_{n,p}]\), we count pairs of (undirected) arcs \(\{e,f\}\) in \(G_{n,p}\) that, after suitable orientation, form an edge of \(\SEP_{n,p}\).
    Recall from \Cref{prop:EdgeCharSEP} that an edge is realized if and only if the corresponding orientations of $e$ and $f$ are not an antipodal pair and do not lie on a common directed $3$- or $4$-cycle.
    We distinguish the following cases:
    
    \begin{maincases}
    
    \item[\(e \cap f = \emptyset\)]\label{lem:exp-case:empty}
        In this scenario, \(e\) and \(f\) are disjoint arcs. There are \(3 \binom{n}{4}\) ways to choose the unordered pair \(\{e,f\}\) on four disjoint nodes. For each choice, there are four ways to orient \(e\) and \(f\).
        For these oriented arcs to violate the edge condition of \(\SEP_{n,p}\) in \Cref{prop:EdgeCharSEP}, there must be a directed 4-cycle containing both.
        Crucially, once the orientations of \(e\) and \(f\) are fixed, there is exactly one cyclic ordering of the four nodes that respects these directions. Consequently, forming a directed 4-cycle requires the presence of exactly two specific additional arcs in \(G_{n,p}\).
        The arcs \(e\) and \(f\) themselves occur with probability \(p^2\), and the 4-cycle is avoided with probability \(1-p^2\). Hence, the total contribution from disjoint arcs is
        \[
        12 \binom{n}{4} p^2 (1 - p^2).
        \]
    
    \item[\(e\cap f = \{v\}\) for some node \(v \in V\)]
        There are \(\binom{n}{3}\) ways to choose the three involved nodes and three ways to select the central node \(v\).
    
        \begin{subcases}
        
        \item Both arcs are oriented in the same direction with respect to \(v\) (both incoming or both outgoing).
            They cannot lie on a common directed cycle. There are two such orientation patterns. The probability of having two such arcs is \(p^2\). By \Cref{prop:EdgeCharSEP}, this contributes
            \[
            2 \cdot 3 \binom{n}{3} p^2 = 6 \binom{n}{3} p^2.
            \]
    
        \item The two arcs are oriented in opposite directions at \(v\) (one incoming, one outgoing).
            There are two such orientation patterns.
            These arcs form an edge in \(\SEP_{n,p}\) if they are not part of a directed 3-cycle or any directed 4-cycle.
            A 3-cycle is formed if the arc connecting the endpoints of \(e\) and \(f\) exists (probability \(p\)).
            A 4-cycle is formed via a fourth node \(w\) if the two arcs connecting \(w\) to the endpoints of \(e\) and \(f\) exist (probability \(p^2\)).
            Since the potential 3-cycle and the \((n-3)\) potential 4-cycles (one for each remaining node \(w\)) require disjoint sets of arcs, the events of these cycles occurring are independent.
            The probability that none of them occur is \((1 - p)(1 - p^2)^{n-3}\).
            This yields the contribution
            \[
            6 \binom{n}{3} p^2 (1 - p)(1 - p^2)^{n-3}.
            \]
            
        \end{subcases}
        
    \standarditem[Conclusion.]
        Summing these contributions yields the explicit formula.
        For the asymptotic behaviour, note that \(1-p^2 = (1-p)(1+p) \simeq 1-p\).
        The term from Case~1 is of order \(n^4 p^2 (1-p)\), while the contribution from Case~2 is of order \(n^3 p^2\).
        Comparing these magnitudes yields the claimed asymptotics.
        \qedhere
        
    \end{maincases}
    
\end{proof}

\begin{rem}\label{rem:SEPexpSubgraphs}
    \Cref{prop:EdgeCharSEP} reduces the edge counting in \(\SEP_{n,p}\) to counting pairs of arcs in \(G_{n,p}\) that cannot be completed to \(3\)- or \(4\)-cycles.
    Assume that \(\limsup p < 1\). Under this restriction, \Cref{expectation_sep} implies \(\EE[K_{n,p}] \simeq n^4 p^2\).
    Although different configurations contribute, the dominant term arises from pairs of disjoint arcs not contained in a \(4\)-cycle (\Cref{lem:exp-case:empty}).
    In this regime, the restriction of excluding pairs in a \(4\)-cycle only affects the scaling factor, not the asymptotic order.
    This matches the expected number of subgraphs in \(G_{n,p}\) consisting of two disjoint arcs:
    Let \(G_0\) be a graph formed by two disjoint arcs, and let \(S_{n,p}\) be the number of copies of \(G_0\) in \(G_{n,p}\). Then
    \[
    \EE[S_{n,p}] = \binom{n}{2} \binom{n-2}{2} \cdot \frac{1}{2} \cdot p^2 \simeq n^4 p^2,
    \]
    which matches the order of \(\EE[K_{n,p}]\). For an introduction to random subgraph counting, see e.g., \cite[Chapter~3]{JLR00}.
    However, this parallel with subgraph counting does not extend to the variance, as we will see in \Cref{rem:SEPvarSubgraphs}.
\end{rem}

With \Cref{expectation_sep} in hand, we have a precise count for \(\EE[K_{n,p}]\). In the sections that follow, we analyze the variance of \(K_{n,p}\) and establish the corresponding central limit theorem, thus finalizing the proof of \Cref{thm:Main} for edges of the symmetric edge polytope.

\subsection{Variance}

Having determined the expectation of \(K_{n,p}\), we now turn to its variance. Rather than providing an explicit formula for \(\var(K_{n,p})\), which is in principle possible, we focus on its asymptotic behavior and establish the asymptotic growth rate.

\begin{thm}\label{variance_sep}
    Let \(K_{n,p}\) denote the number of edges of the random symmetric edge polytope \(\SEP_{n,p}\).
    Then, as $n \to \infty$,
    \begin{align*}
        \var(K_{n,p})
        \;&\simeq\;
             n^6 p^3 (1-p) \bigl( 1 - \sqrt{2}p \bigr)^2
            + n^5 p^3 (1-p)
            + n^4 p^2 (1-p)
        \\
        \;&\simeq\;
            \begin{cases}
                 n^6 p^3 (1-p) \bigl( 1 - \sqrt{2}p \bigr)^2 + n^5 p^3 (1-p), & \text{if } p \gg n^{-2}, \\
                 n^4 p^2, & \text{if } p \lesssim n^{-2}.
            \end{cases}
    \end{align*}
    In particular, $\var(K_{n,p}) \to \infty$ if and only if \(p \gg n^{-2}\) and \(1-p \gg n^{-6}\).
\end{thm}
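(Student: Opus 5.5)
We write \(K_{n,p}=\sum_{\{x,y\}} I_{\{x,y\}}\), where the sum runs over unordered pairs of non-antipodal directed arcs and \(I_{\{x,y\}}\) is the indicator that both underlying arcs are present and no directed \(3\)- or \(4\)-cycle of \(G_{n,p}\) contains both \(x\) and \(y\) (\Cref{prop:EdgeCharSEP}). Then
\[
\var(K_{n,p})=\sum_{\alpha,\beta}\operatorname{Cov}(I_\alpha,I_\beta).
\]
Each indicator depends only on the arcs of its support: the two defining arcs together with the arcs that could close a \(3\)- or \(4\)-cycle through them. Disjoint supports give zero covariance. For the two types of pairs from the proof of \Cref{expectation_sep}, the supports are small for disjoint arcs \(e,f\) (two completing arcs) and for same-direction pairs at a common node (no completing arcs). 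For opposite-direction pairs at a common node \(v\) with endpoints \(a,b\), the support contains all arcs \(\{a,w\},\{b,w\}\). Nonzero covariances therefore arise only when the supports share an arc. I will group the pairs \((\alpha,\beta)\) by the number of distinct nodes involved and the overlap pattern, compute the leading covariance for each group, and collect the powers of \(n\).

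\emph{The \(n^6\) term.} Here \(\alpha\) and \(\beta\) span six nodes in total and share exactly one arc. The dominant case is \(\alpha,\beta\) both of disjoint-arc type. The shared arc is either a defining arc of both, a defining arc of one and a completing arc of the other, or a completing arc of both. Write \(I_\alpha=\mathbf 1_e\mathbf 1_f(1-\mathbf 1_g\mathbf 1_h)\) with \(g,h\) the two completing arcs. Conditioning on the shared arc \(s\) gives \(\operatorname{Cov}(I_\alpha,I_\beta)=p(1-p)\,(A_1-A_0)(B_1-B_0)\), where \(A_i=\EE[I_\alpha\mid \mathbf 1_s=i]\). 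For \(s\) a defining arc, \(A_1-A_0=p(1-p^2)\); for \(s\) a completing arc, \(A_1-A_0=-p^3\). Since each disjoint pair yields four oriented versions and each arc plays its role with the appropriate multiplicity, these derivatives should sum, after counting, to something proportional to \(p\bigl((1-p^2)-p^2\bigr)=p(1-2p^2)\). Summing over the \(\simeq n^6\) configurations gives
\[
n^6p^3(1-p)\,c\,(1-2p^2)^2.
\]
Opposite-direction pairs contribute at most \(n^5\) at this level, because such a pair has only three nodes. My first task is to verify that the combinatorial weights really make \((1-p^2)\) and \(p^2\) enter with equal weight. Equivalently, I want the Hoeffding first-order projection \(\sum_s \mathbf 1_s\cdot \partial_s \EE[K_{n,p}]\) to have its \(n^6\)-part proportional to \(n^2(1-2p^2)\,p\) per arc. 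A cleaner route is to compute \(\partial_p\) of the leading part of the expectation: each arc's influence is \(\frac{d}{dp}\bigl[12\binom n4 p^2(1-p^2)\bigr]/\binom n2\sim n^2 p(1-2p^2)\), up to constants. Note \(1-2p^2=(1-\sqrt2p)(1+\sqrt2p)\), which gives \((1-\sqrt2 p)^2\) up to bounded factors, matching the statement.

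\emph{Lower-order terms.} The \(n^5p^3(1-p)\) term comes from pairs sharing one arc on five nodes, for instance a disjoint-arc pair with a one-common-node pair, or a defining arc of one pair coinciding with the shared node structure. It also collects the next term of the first-order projection, coming from the \(n^3\)-part of the expectation and the \((1-p^2)^{n-3}\) factor. The \(n^4p^2(1-p)\) term is the diagonal \(\sum_\alpha\var(I_\alpha)\simeq n^4p^2(1-p^2)\), together with pairs sharing both defining arcs. I will give upper bounds for all remaining overlap patterns (two or more shared arcs, or fewer nodes) of the form \(n^{v}p^{k}(1-p)\) and check that each is dominated by one of the three displayed terms. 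For opposite-direction pairs the factor \((1-p^2)^{n-3}\) makes them negligible unless \(1-p\lesssim n^{-1}\); I will treat that regime separately using \(\EE\)-asymptotics. The lower bound at order \(n^5p^3(1-p)\), needed when \(p\) is near \(1/\sqrt2\), is the main obstacle. For it I plan to show that the five-node covariance group has a strictly positive coefficient at \(p=1/\sqrt2\), or, more robustly, to bound \(\var(K_{n,p})\) below by the variance of the second-order Hoeffding (pair-of-arcs) projection, which is nonnegative and of order \(n^4p^2\cdot n p(1-p)\) by orthogonality. Finally, the case split is immediate: \(n^6p^3\gg n^4p^2\) iff \(p\gg n^{-2}\), and divergence requires \(n^5p^3(1-p)\to\infty\) or the \(n^4\) term to diverge, giving \(1-p\gg n^{-6}\).
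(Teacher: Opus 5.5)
Your overall plan is sound, and in two places it takes a genuinely different and cleaner route than the paper.

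For the $n^6$ term, the paper enumerates three six-node configurations with weights $2,4,2$ and covariances $p^3(1-p)(1-p^2)^2$, $-p^5(1-p)(1-p^2)$ and $p^7(1-p)$. Your single-shared-arc formula shows these are simply the expansion of $2(n)_6\,p(1-p)\bigl(p(1-p^2)-p^3\bigr)^2$. The equal weighting you want to verify does hold: every arc is a defining arc of exactly as many oriented disjoint pairs as it is a completing arc of, namely $2(n-2)(n-3)$. This agrees with $\binom{n}{2}^{-1}\tfrac{d}{dp}\bigl[12\binom{n}{4}p^2(1-p^2)\bigr]=2(n-2)(n-3)\,p(1-2p^2)$.

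For the $n^5$ lower bound, the paper uses your first option: the explicit positive Subcase~1.3 coefficient $8(n)_5p^3(1-p)\bigl(p^3(1-p)+(1-2p^2)^2\bigr)$, together with the claim that Case~2 is of smaller order. Your orthogonality bound $\var(K_{n,p})\ge\sum_{\{s,t\}}\bigl(\EE[\D_s\D_tK_{n,p}]\bigr)^2$ is more robust, because it is blind to the signs of the other $n^5$-level terms. An example is the covariances between disjoint pairs and same-direction $2$-path pairs sharing an arc. These are of order $n^5p^3(1-p)\lvert 1-2p^2\rvert$, and since same-direction pairs lie on no cycle, they are not damped by the factor $(1-p^2)^{n-k}$ invoked in the paper's Case~2.

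To carry the bound out, compute for adjacent arcs $s,t$ that $\EE[\D_s\D_tK_{n,p}]=p(1-p)\bigl(2-4(n-3)p^2\bigr)$, up to terms carrying a factor $(1-p^2)^{n-O(1)}$. The $2$ comes from the same-direction pair $\{s,t\}$, and the $-4(n-3)p^2$ from the $4$-cycles through the path $s\cup t$. This gives $\approx 8n^5p^6(1-p)^2$, matching the $8(n)_5p^6(1-p)^2$ piece of the paper's coefficient. It is $\simeq n^5p^3(1-p)$ only for $p$ bounded away from $0$ and $1$, not in general as your heuristic suggests. That suffices, since the $n^5$ lower bound is only needed where $\lvert 1-\sqrt2p\rvert\lesssim n^{-1/2}$. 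Note also that at $p=1/\sqrt2$ the surviving $n^5$ term comes from supports sharing two adjacent arcs; five-node configurations sharing a single arc only produce multiples of $1-2p^2$.

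The genuine gap is your treatment of opposite-direction $2$-path pairs. You observe that their support contains all arcs $\{a,w\},\{b,w\}$, so you cannot then argue they contribute at most $n^5$ ``because such a pair has only three nodes''. Two such pairs on disjoint node triples share the arcs joining their endpoints. Also, one such pair on $a,v,b$ is correlated with disjoint pairs having some $\{a,w\}$ in their support. So there are $\Theta(n^6)$ dependent configurations. Moreover, the regime in which $(1-p^2)^{n-3}$ provides no damping is $p\lesssim n^{-1/2}$, not $1-p\lesssim n^{-1}$; in the latter regime this factor is super-exponentially small. As written, your plan therefore leaves the range $n^{-2}\ll p\lesssim n^{-1/2}$ uncontrolled.

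The repair is the power counting of the paper's Case~2. A dependent configuration on $k\le 6$ nodes involving a $2$-path forces $\ell$ distinct present arcs, with $\ell=4$ for $k=6$ and $\ell\ge 3$ for $k\in\{4,5\}$. Its covariance is therefore at most $p^\ell$, and the totals are $O(n^6p^4+n^5p^3+n^4p^3+n^3p^2)=o(n^6p^3+n^4p^2)$ as $p\to0$. For $p$ bounded away from $0$, opposite-direction configurations are exponentially small, and same-direction pairs are independent of everything at six nodes.

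Finally, the threshold $1-p\gg n^{-6}$ comes from the $n^6$ term, whose coefficient $(1-\sqrt2p)^2$ tends to $(\sqrt2-1)^2>0$ as $p\to1$. The $n^5$ term alone would only give $1-p\gg n^{-5}$.
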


\begin{proof}

    To analyze \(\var(K_{n,p})\), we use the fact that \(K_{n,p}\) can be written as a sum of indicator random variables over pairs of (directed) arcs in the complete graph on \([n]\).
    Given an undirected arc \(e\), we use the symbol \(\arr{e}\) to denote a corresponding directed arc (without specifying its orientation). Specifically, for a pair of directed arcs \(\{\arr{e},\arr{f}\}\), let
    \(
        \indicator_{\{\arr{e},\arr{f}\}}
    \)
    be the indicator variable of the event that vertices corresponding to \(\arr{e}\) and \(\arr{f}\) exist and form an edge in \(\SEP_{n,p}\). This allows us to write
    \[
        K_{n,p}
        \;=\;
        \sum_{\{\arr{e},\arr{f}\}} \indicator_{\{\arr{e},\arr{f}\}},
    \]
    where the sum is over all unordered pairs \(\{\arr{e},\arr{f}\}\) of directed arcs in the complete graph.
    The variance then expands as
    \[
        \var(K_{n,p})
        \;=\;
        \sum_{\{\arr{e},\arr{f}\}} \sum_{\{\arr{g},\arr{h}\}}
        \mathrm{Cov}\bigl(\indicator_{\{\arr{e},\arr{f}\}},\,\indicator_{\{\arr{g},\arr{h}\}}\bigr).
    \]
    To evaluate the asymptotic growth of this double sum, we group the covariances according to how the arcs inside each of the two pairs of underlying undirected arcs \(\{e,f\}\) and \(\{g,h\}\) overlap.
    Each scenario yields different combinatorial factors, involving different powers of $n$ and different probabilities.
    Throughout this proof, we use the notation $(n)_k \coloneqq k! \binom{n}{k} \simeq n^k$ for $k \in \NN$ as $n \to \infty$. The cases are illustrated in \Cref{fig:variance}.
    
    \begin{figure}[t]
        \centering%
        \input{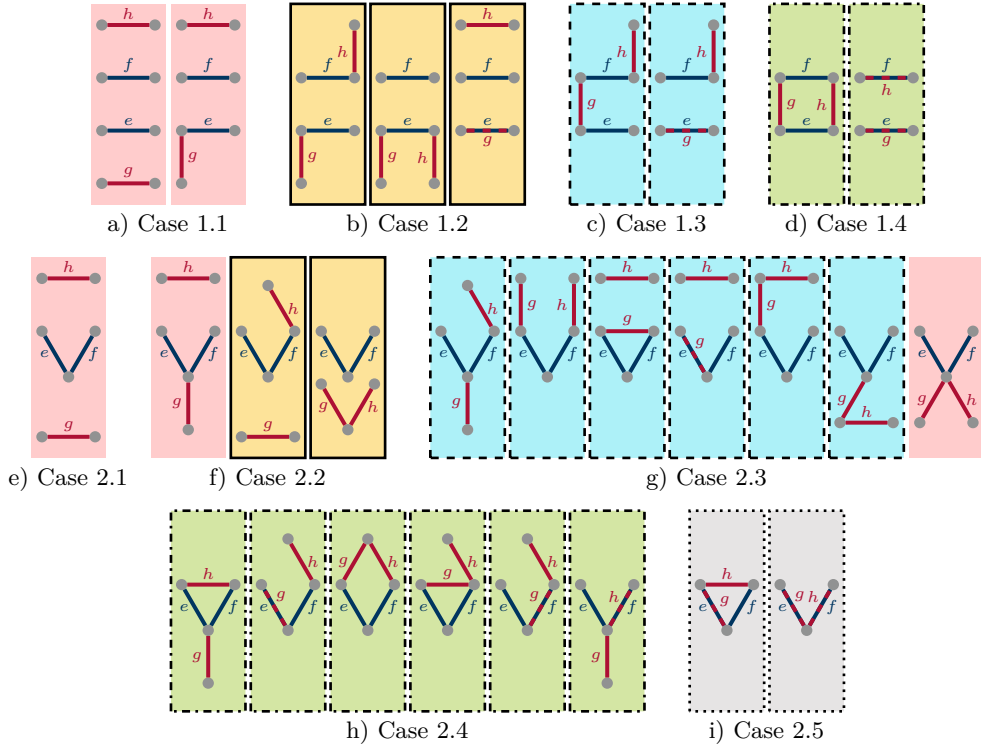}%
        \tikzset{every node/.style={font=\normalfont, inner sep=4pt}}%
        \caption{%
            The configurations arising in the proof of \Cref{variance_sep} (up to symmetry);
            the graphs are colored to indicate the order of $n$ in their contribution to the covariance:
            \raisebox{-5pt}{%
            \strut%
            \protect\caserateN6P0 \protect\tikz\protect\node[graphbox] {$n^6$};
            \protect\caserateN5P0 \protect\tikz\protect\node[graphbox] {$n^5$};
            \protect\caserateN4P0 \protect\tikz\protect\node[graphbox] {$n^4$};
            \protect\caserateN3P0 \protect\tikz\protect\node[graphbox] {$n^3$};
            \protect\caserateN0P0 \protect\tikz\protect\node[graphbox] {$0$ \rlap{\phantom{$n^0$}}\smash{(independent)}};%
            },
            where all configurations in Case 2 contain an additional exponentially decreasing factor.
        }%
        \label{fig:variance}%
    \end{figure}
    
    \begin{maincases}
    
    \item[\( e\cap f = \emptyset \) and \( g\cap h = \emptyset \)]\label{case:SEPvariance:1}
        Here each pair lies on four distinct nodes. As in the proof of \Cref{expectation_sep}, there is a unique directed 4-cycle containing a fixed pair of directed arcs. We distinguish further subcases based on the intersection size of the two sets of four nodes.
    
        \begin{subcases}
    
        \item[\( \vert (e \cup f) \cap (g \cup h) \vert \leq 1 \)]\label{subcase:SEPvariance:11}
            The pairs of arcs share at most one node. Their induced 4-cycles cannot share an arc. Consequently, \(\indicator_{\{\arr{e},\arr{f}\}}\) and \(\indicator_{\{\arr{g},\arr{h}\}}\) are independent random variables, and their covariance vanishes.
            
        \item[\( \vert (e \cup f) \cap (g \cup h) \vert = 2 \)]\label{subcase:SEPvariance:12}
            We assume the arcs share exactly two nodes. There are three possible configurations (up to symmetry), shown in \nameCref{fig:variance}~\hyperref[fig:variance]{2b)}.
            
            In the first configuration, there are two disjoint paths on three nodes, each consisting of exactly one arc from \(\{e,f\}\) and one from \(\{g,h\}\). By renaming if necessary, we assume \(e\) forms a 2-path with \(g\), and \(f\) with \(h\). Under this configuration, the respective directed 4-cycles can overlap only on the arc \(\ell\) between the nodes \(e\cap g\) and \(f\cap h\).
            We obtain such a configuration by choosing six nodes, dividing them into two equally-sized components, choosing the common node for each component, choosing which arcs are paired, assigning which pair is \(\{e,f\}\), and choosing one of the two possible orientations for each pair.
            This yields
            \[
                \binom{n}{6} \cdot \frac{\binom{6}{3}}{2} \cdot 3^2 \cdot 2 \cdot 2 \cdot 2^2
                = 2 (n)_6
            \]
            configurations. The arc \(\ell\) impacts both pairs, so these configurations yield a covariance of
            \[
                p^4 \Bigl(
                    \bigl( p(1-p)^2 + (1-p) \bigr)
                    - (1-p^2)^2
                \Bigr)
                = p^7(1-p),
            \]
            resulting in a contribution of \(2 (n)_6 p^7(1-p)\).
            
            The second configuration consists of one path on four nodes and one arc disjoint from the path. This can be obtained by choosing six nodes, choosing components of size four and two, choosing the middle arc of the 3-path, connecting the remaining nodes, and assigning which pair is \(\{e,f\}\).
            The indicator variables are dependent only if the 3-path is a subgraph of one of the respective 4-cycles.
            For the pair of arcs connected via the 3-path there are only two possible orientations, whereas for the other pair there are four possible orientations.
            This yields
            \[
                \binom{n}{6} \cdot \binom{6}{2} \cdot \binom{4}{2} \cdot 2 \cdot 2 \cdot 2 \cdot 4
                = 4 (n)_6
            \]
            configurations. Since the middle piece of the 3-path is already present to build the 4-cycle containing the other pair of arcs, they contribute a covariance of
            \[
                p^4 \Bigl(
                    \bigl( 1-p^2 \bigr)(1-p) - \bigl( 1-p^2 \bigr)^2
                \Bigr)
                = - p^5 (1-p) \bigl( 1-p^2 \bigr),
            \]
            resulting in a total contribution of \(-4 (n)_6 p^5(1-p)(1-p^2)\).

            In the third configuration, the two pairs share one arc and exactly two nodes.
            There are
            six nodes to be chosen,
            two of which have to be selected to form the shared arc.
            There are three ways to construct two arcs among the remaining four nodes,
            two possibilities to assign which pair of arcs is \(\{e,f\}\),
            and two orientations per arc.
            We obtain
            \[
                \binom{n}{6} \cdot
                \binom{6}{2} \cdot
                3 \cdot
                2 \cdot
                2^4
                = 2 (n)_6,
            \]
            such configurations.
            Their induced 4-cycles do not overlap except for the shared arc, which yields the covariance
            \[
                p^3 \bigl( 1-p^2 \bigr)^2 - p^4 \bigl( 1-p^2 \bigr)^2
                = p^3 (1-p) \bigl( 1-p^2 \bigr)^2.
            \]
            This leads to an overall contribution of \(2 (n)_6 p^3 (1-p) (1-p^2)^2\).

            Taking together all contributions from this subcase, we get a total of
            \begin{align*}
                 & 2 (n)_6 p^7 (1-p)
                 - 4 (n)_6 p^5 (1-p) \bigl( 1-p^2 \bigr)
                 + 2 (n)_6 p^3 (1-p) \bigl( 1-p^2 \bigr)^2
                \\
                &= 2 (n)_6 p^3 (1-p) \bigl( 1 - 2p^2 \bigr)^2 \\
                &\simeq n^6 p^3 (1-p) \bigl( 1 - \sqrt{2}p \bigr)^2.
            \end{align*}
            
        \item[\( \vert (e \cup f) \cap (g \cup h) \vert = 3 \)]\label{subcase:SEPvariance:13}
            There are two possible configurations (up to symmetry) in which the pairs of arcs share three nodes, shown in \nameCref{fig:variance}~\hyperref[fig:variance]{2c}.
            
            In the first configuration, $e,f,g,h$ form a path of length four with alternating arcs between pairs. There are \((n)_5/2\) ways to choose such a path and two ways to pick which pair of arcs is \(\{e,f\}\), yielding \((n)_5\) cases in total. The covariance depends on the orientation of each pair:
            \begin{itemize}
                \item There are four orientations where each 4-cycle includes one arc of the other pair. This results in a covariance of
                \( p^4 \bigl( (1-p)^2 - ( 1-p^2 )^2 \bigr)
                = -p^5 (1-p)^2 (2+p)\).
                \item There are eight orientations where only one 4-cycle contains an arc of the other pair. In these cases the other 4-cycle does not overlap with anything, yielding a covariance of
                \( p^4 \bigl( (1-p) ( 1-p^2 ) - ( 1-p^2 )^2 \bigr)
                = - p^5 (1-p) (1-p^2)\).
                \item Finally, there are four orientations where no cycle contains an arc of the other pair. In this case their 4-cycles share one arc, which yields a covariance of
                \( p^4 \bigl( p (1-p)^2 + (1-p) - ( 1-p^2 )^2 \bigr)
                = p^7 (1-p)\).
            \end{itemize}
            Summing these contributions gives in total
            \[
                -4 (n)_5 p^5 (1-p) \Bigl( 4 (1 - p^2) - p \Bigr).
            \]
            
            In the second configuration, the two pairs share one arc (including its two adjacent nodes) and additionally one more node.
            This configuration can be obtained by choosing five nodes, picking the shared arc, choosing the arc that does not appear among the remaining three nodes, and then choosing which arc of the 2-path belongs to which pair.
            This yields
            \[
                \binom{n}{5} \cdot \binom{5}{2} \cdot 3 \cdot 2
                = \frac{1}{2} (n)_5
            \]
            cases. Depending on the orientation there are two possible results for the covariance. Without loss of generality, we assume that $f$ and $h$ are the arcs forming the 2-path. In every possible orientation, both 4-cycles must contain one arc connecting the node given by \(f \cap h\) to the arc \(e=g\).
            \begin{itemize}
                \item If this arc is not the same for both 4-cycles, then the covariance only depends on the shared arc \(e=g\), which yields \( (p^3-p^4) ( 1-p^2 )^2 = p^3 (1-p) (1-p^2)^2\).
                \item If both 4-cycles overlap in this arc, then the cycles share two arcs in total, which yields a covariance of \( p^3 ( p(1-p)^2 + (1-p) ) - p^4 ( 1-p^2 )^2 = p^3 (1-p) (p^3 + (1-p^2)^2)\).
            \end{itemize}
            Note that each covariance arises in eight orientations. Putting both terms from this configuration together, we arrive at
            \[
                4 (n)_5 p^3 (1-p) \Bigl( p^3 + 2\bigl( 1-p^2 \bigr)^2 \Bigr).
            \]
            Collecting all contributions for this subcase yields
            \begin{align*}
                & -4 (n)_5 p^5 (1-p) \Bigl( 4 (1 - p^2)- p \Bigr)
                + 4 (n)_5 p^3 (1-p) \Bigl( p^3 + 2\bigl( 1-p^2 \bigr)^2 \Bigr)
                \\
                &= 8 (n)_5 p^3 (1-p) \Bigl( p^3 (1-p) + \bigl( 1-2p^2 \bigr)^2 \Bigr)
                \\
                &\simeq n^5 p^3 (1-p).
            \end{align*}
            
        \item[\( \vert (e \cup f) \cap (g \cup h) \vert = 4 \)]\label{subcase:SEPvariance:14}
            If \(e,f\) and \(g,h\) share four nodes, the pairs of arcs either form a 4-cycle or they are identical, i.e., \(\{e,f\}=\{g,h\}\).
            
            First, consider the case where the arcs form a 4-cycle. We can construct such a case by choosing four nodes, choosing two arcs between them to be \(e,f\), and finally choosing the other pair. This leads to
            \[
                \binom{n}{4} \cdot \frac{\binom{4}{2}}{2} \cdot 2
                = \frac{1}{4} (n)_4
            \]
            terms. The covariance depends on the orientation of the arcs:
            \begin{itemize}
                \item There are four ways to orient the arcs such that both induced 4-cycles match the one consisting of \(e,f,g,h\). In this case, the respective edges in the polytope \(\SEP_{n,p}\) can never be present simultaneously, resulting in a covariance of \(-\bigl( p^2 (1-p^2) \bigr)^2\).
                \item Eight different orientations lead to a configuration where only one of the induced 4-cycles matches the one formed by \(e,f,g,h\). Again, the respective edges can never be present simultaneously, yielding the same covariance of \(-\bigl( p^2 (1-p^2) \bigr)^2\) as in the previous case.
                \item Finally, there are four orientations such that none of the induced 4-cycles includes the other pair of arcs. In this case both 4-cycles are completed using the same arcs, yielding a covariance of \( p^4 \bigl( ( 1-p^2 ) - ( 1-p^2 )^2 \bigr) = p^6(1-p^2) \).
            \end{itemize}
            The total contribution from this configuration is
            \[
                - (n)_4 p^4 \bigl( 1-p^2 \bigr) \bigl( 3 - 4p^2 \bigr).
            \]
            
            Next, consider the case where the pairs of arcs are identical. This configuration yields
            \[
                \binom{n}{4} \cdot \frac{\binom{4}{2}}{2}
                = \frac{1}{8} (n)_4
            \]
            terms. We evaluate the different orientations:
            \begin{itemize}
                \item There are eight ways to orient the arcs such that \(\{\arr{e},\arr{f}\}=\{\arr{g},\arr{h}\}\) (or with flipped signs), so that their respective 4-cycles match. This yields a covariance of \(p^2(1-p^2) - p^4(1-p^2)^2\).
                \item There are eight ways to orient the arcs such that the pairs differ in sign or assignment so that their respective 4-cycles do not match. This yields a covariance of
                \(
                    p^2(1-p^2)^3
                \).
            \end{itemize}
            This leads to a total contribution of
            \[
                (n)_4 p^2 \bigl( 1-p^2 \bigr) \Bigl( 1 - p^2 (1-p^2) + (1-p^2)^2 \Bigr).
            \]
            Combining the cycle and identity cases yields terms of order \(n^4 p^2 (1-p)\).
    
        \end{subcases}
    
    \item[\(\vert e \cap f \vert = 1\) or \(\vert g \cap h \vert = 1\)]\label{case:SEPvariance:2}
        At least one of the pairs of arcs \(\{e,f\}\) or \(\{g,h\}\) shares a node and thus forms a 2-path. By renaming if necessary, we assume that \(\vert e \cap f \vert = 1\).
        We claim that all contributions from this case are of smaller order than those from \Cref{case:SEPvariance:1} and thus negligible.
        To prove this, we bound the absolute value of every individual covariance.
        Let \(k \in \{ 3,4,5,6,7 \} \) be the number of nodes and \(\ell \in \{ 2,3,4 \} \) be the number of arcs involved in the configuration.
        For \(k=7\) the covariance is zero.
        Let $k \leq 6$.
        For an edge indicator to be 1, both respective arcs must be present. Further, for \(\{e,f\}\), they must not form a 4-cycle with any of the \(n-k\) other nodes.
        Thus,
        \begin{align*}
            \big\vert \mathrm{Cov}\bigl(\indicator_{\{\arr{e},\arr{f}\}}, \indicator_{\{\arr{g},\arr{h}\}}\bigr) \big\vert
            &\leq \max \Bigl\{
                    \EE\bigl[ \indicator_{\{\arr{e},\arr{f}\}} \indicator_{\{\arr{g},\arr{h}\}} \bigr]
                    \, , \,
                    \EE\bigl[ \indicator_{\{\arr{e},\arr{f}\}} \bigr] \EE\bigl[ \indicator_{\{\arr{g},\arr{h}\}} \bigr]
                \Bigr\}
            \\
            &\leq p^\ell \bigl( 1-p^2 \bigr)^{n-k}
            \\
            &\leq p^\ell e^{-(n-k)p^2}.
        \end{align*}
        For every configuration, the number of individual covariances of this type is of order at most \(n^k\), resulting in a total contribution of order at most \(n^k p^\ell e^{-np^2}\).
        For $p \gg n^{-\frac12}$ the term \(e^{-(n-k)p^2}\) ensures that these contributions are negligible compared to the polynomial terms derived in \Cref{case:SEPvariance:1}.
        If $p \lesssim n^{-\frac12}$, there is no exponential decay and the contributions are at most of order $n^6p^4$, $n^5p^3$, $n^4p^3$, or $n^3p^2$.
        In comparison, for $p \to 0$ the results from \Cref{case:SEPvariance:1} are of order $n^6p^3$ or $n^4p^2$ and therefore dominant.
    
    \standarditem[Conclusion.]
        Collecting all dominant terms, we obtain contributions of order \(n^6 p^3 (1-p) (1 - \sqrt{2}p)^2\), \(n^5 p^3 (1-p)\), and \(n^4 p^2 (1-p)\). All contributions from \Cref{case:SEPvariance:2} are of smaller order. This completes the proof.
        \qedhere
    
    \end{maincases}

\end{proof}

\begin{rem}
    Under the assumptions that $\limsup_{n\to \infty} p<1$ and \(p \gg n^{-2}\), \Cref{variance_sep} implies
    \[
        \var(K_{n,p})
        \;\simeq\; n^6 p^3 \bigl( 1 - \sqrt{2}p \bigr)^2 + n^5 p^3
        \;\to\; \infty.
    \]
    If, additionally, $\frac{1}{\sqrt{2}}$ is not an accumulation point of $p$, the leading term dominates, and we recover the generic growth rate
    \[
        \var(K_{n,p})
        \;\simeq\; n^6 p^3.
    \]
    However, if $p$ approaches the critical value $\frac{1}{\sqrt{2}}$, the leading coefficient vanishes. In this regime, the variance grows more slowly at a rate strictly less than $n^6$, but at least of order $n^5$.
    While the algebraic cancellation leading to this suppression is explicit in our covariance analysis (specifically between Subcases 1.2 and 1.3), a heuristic explanation for why the geometric fluctuations are minimized at this specific density remains an open question.
\end{rem}

\begin{rem}\label{rem:SEPvarSubgraphs}
    In \Cref{rem:SEPexpSubgraphs}, we highlighted parallels between edge counting in $\SEP_{n,p}$ and subgraph counting in $G_{n,p}$.
    This parallel breaks down at the level of the variance.
    Let \(G_0\) be the graph consisting of two disjoint arcs, and let \(S_{n,p}\) be the number of copies of $G_0$ in \(G_{n,p}\).
    Standard results on subgraph counts (see, e.g., \cite[Lemma~3.5]{JLR00}) imply that for \( p \gg n^{-2} \) and $\limsup_{n\to\infty} p < 1$, the variance satisfies
    \[
        \var(S_{n,p}) \;\simeq\; \frac{(n^4p^2)^2}{ n^2p } \;=\; n^6p^3.
    \]
    This matches the generic order of $\var(K_{n,p})$ derived in \Cref{variance_sep}.
    However, the variance of the subgraph count \(S_{n,p}\) does not exhibit any degeneracy at $p = \frac{1}{\sqrt{2}}$.
    The suppression of the leading variance term observed for the symmetric edge polytope appears to be a phenomenon unique to the geometric construction, arising from the specific interplay of forbidden $3$- and $4$-cycles, which has no direct analogue in classical subgraph counting.
\end{rem}

\subsection{Discrete gradients}

The second step in establishing the central limit theorem for the number of edges \( K_{n,p} \) of the random symmetric edge polytope \( \SEP_{n,p} \) involves controlling the discrete gradients of \( K_{n,p} \) and estimating their moments. To this end, we introduce the following notation. Let \( G = (V, E) \) be a graph with set of nodes  \(V=[n]\). For notational convenience, we write \( st \in \binom{V}{2} \) instead of \( \{s,t\} \in \binom{V}{2} \). We define an \emph{\( s \)--\( t \) path of length \( i \geq 1 \)} in \( G \) as a sequence of distinct nodes
\[
s = v_0, v_1, v_2, \dots, v_i = t
\]
such that \( \{v_\ell, v_{\ell+1}\} \in E \) for all \( \ell = 0, 1, \dots, i-1 \). The length \( i \) refers to the number of arcs traversed.
When the endpoints \(s\) and \( t \) are not specified, we simply speak of \(i\)-paths.
We write
\(G + e\) for the graph obtained from \(G\)
by adding \(e=st \in \binom{V}{2}\), if not already present,
that is, \(G + e \coloneqq (V,E \cup \{e\}) \),
and \(G - e\) for the graph obtained
by removing \(e\), if present,
that is, \(G - e \coloneqq (V,E \setminus \{e\}) \).
We denote \( q \coloneqq 1 - p \).
\begin{lemma}\label{Dfbound_SEP}
    Let \( G_{n,p} \) be an Erd\H{o}s--R\'enyi random graph and let \( E_{n,p} \) denote the number of its arcs. Fix \( e = st \in \binom{[n]}{2} \), and let \( W_{n,p}(i) \) denote the number of \( s \)--\( t \) paths of length \( i \geq 1 \) in \( G_{n,p} \). Then the following bound holds almost surely:
    \[
    \left| \D_{e} K_{n,p} \right| \leq \sqrt{pq} \bigl( 4E_{n,p} + 2W_{n,p}(2) + 6W_{n,p}(3) \bigr).
    \]
\end{lemma}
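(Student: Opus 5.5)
Since $\D_e K_{n,p} = \sqrt{pq}\,\bigl(K(G_{n,p}+e) - K(G_{n,p}-e)\bigr)$, it suffices to show deterministically, for every graph $G$ on $[n]$ and $e=st$, that
\[
\bigl|K(G+e)-K(G-e)\bigr| \le 4|E(G)| + 2W(2) + 6W(3),
\]
where $W(i)$ counts $s$--$t$ paths of length $i$ in $G$. We should be careful with $G$ versus $G\pm e$. Paths of length $\geq 2$ do not use $e$, so they are the same in $G$, $G+e$ and $G-e$. Also $|E(G-e)|\le|E(G)|$. We split the change in the edge set of $\SEP$ into two parts: edges of $\SEP_{G+e}$ involving one of the two new vertices $\ee_{s,t},\ee_{t,s}$, and edges among the old vertices (pairs of directed arcs of $G-e$) whose status changes.

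\textbf{New vertices.} By \Cref{prop:EdgeCharSEP}, any edge of $\SEP_{G+e}$ through $\ee_{s,t}$ or $\ee_{t,s}$ pairs one of these two vertices with a vertex coming from some directed arc of $G-e$. (The pair $\{\ee_{s,t},\ee_{t,s}\}$ is antipodal and therefore excluded.) There are at most $2|E(G-e)|$ directed arcs in $G-e$ and two new vertices, which gives at most $4|E(G)|$ such edges. This term is one-sided, since these edges are only present in $\SEP_{G+e}$.

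\textbf{Old pairs.} Adding $e$ can only destroy edges between old vertices, and never create them. Indeed, the characterization in \Cref{prop:EdgeCharSEP} is monotone: adding an arc can only create new directed $3$- and $4$-cycles. An old pair $\{\arr{a},\arr{b}\}$ that is an edge in $\SEP_{G-e}$ but not in $\SEP_{G+e}$ must lie on a directed $3$- or $4$-cycle of $G+e$ that uses $e$ (in one of its orientations). Such a cycle is $e$ together with an $s$--$t$ path of length $2$ or $3$ in $G$.

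\begin{itemize}
\item A $3$-cycle through $e$ comes from a $2$-path. It has only one pair of arcs other than $e$, and the orientation is forced once $e$'s direction is chosen. That gives at most $2$ destroyed pairs per $2$-path (one per orientation of the cycle).
\item A $4$-cycle through $e$ comes from a $3$-path with three arcs besides $e$. This gives $\binom{3}{2}=3$ pairs, times $2$ orientations, so at most $6$ per $3$-path.
\end{itemize}

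Each destroyed old edge is charged to at least one such cycle, so the old-pair change is at most $2W(2)+6W(3)$.

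By the triangle inequality the two contributions add up to the stated bound. The only delicate point is the monotonicity argument together with the correct accounting of orientations, so that each path is charged a bounded number of times. The constants $2$ and $6$ come out exactly from the count above, so I expect no real obstacle, only bookkeeping.
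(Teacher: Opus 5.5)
Your proof is correct and follows essentially the same route as the paper: you bound the created edges by the at most $4|E(G-e)|\le 4E_{n,p}$ pairs through the two new vertices, and you charge each destroyed old edge to a directed $3$- or $4$-cycle formed by $e$ (in either orientation) and an $s$--$t$ path of length $2$ or $3$. This gives the bound $2W_{n,p}(2)+6W_{n,p}(3)$; the only difference is that you make explicit the monotonicity (old pairs can only lose edges when $e$ is added), which the paper uses implicitly.
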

\begin{proof}
Recall from \Cref{sec:backgroundprob}
that \((K_{n,p})^+_e\) and \((K_{n,p})^-_e\)
denote the numbers of edges of the symmetric edge polytope
corresponding to \(G_{n,p}+e\) and \(G_{n,p}+e\), respectively.
Then
\[
\D_e K_{n,p}=\sqrt{pq}\,\bigl( (K_{n,p})^+_e - (K_{n,p})^-_e \bigr).
\]
We bound \( \bigl| (K_{n,p})^+_e - (K_{n,p})^-_e \bigr| \) by counting edges of the corresponding polytope that are created and destroyed when passing from \(G_{n,p}-e\) to \(G_{n,p}+e\).

\smallskip\noindent
\emph{Created edges.}
Adding \(e\) introduces exactly two new vertices of the polytope, namely \(\ee_{s,t}\) and \(\ee_{t,s}\), corresponding to the directed arcs \(s\to t\) and \(t\to s\). Any newly created polytope edge must be incident to one of these two vertices.
The polytope associated with \(G_{n,p}-e\) has \( 2 (E_{n,p})^-_e\) vertices, where \((E_{n,p})^-_e\) denotes the number of arcs of \(G_{n,p}-e\).
Hence each of the two new vertices can form an edge with at most \((E_{n,p})^-_e\) existing vertices, so the number of created edges is at most
\[
2 (E_{n,p})^-_e + 2 (E_{n,p})^-_e
= 4 (E_{n,p})^-_e
\le 4E_{n,p}.
\]

\smallskip\noindent
\emph{Destroyed edges.}
By \Cref{prop:EdgeCharSEP}, an edge between two (non-antipodal) vertices of \( \SEP_{n,p} \) is present if and only if there is no directed \(3\)- or \(4\)-cycle in \(G_{n,p}\) containing both corresponding directed arcs. Therefore, any polytope edge that is present for \(G_{n,p}-e\) but absent for \(G_{n,p}+e\) must be destroyed by a directed \(3\)- or \(4\)-cycle that appears only after adding \(e\). Such a new directed cycle must use one of the new directed arcs \(s\to t\) or \(t\to s\). Consequently, destroyed edges can be charged to directed cycles obtained by concatenating \(s\to t\) or \(t\to s\) with an \(s\)--\(t\) path already present in \(G_{n,p}-e\).

If there is an \(s\)--\(t\) path of length \(2\), then together with \(s\to t\) (resp.\ \(t\to s\)) it forms a directed \(3\)-cycle. Each such directed \(3\)-cycle contains exactly two arcs that are already present in \(G_{n,p}-e\), and hence can destroy at most one pre-existing polytope edge (namely the edge between the two vertices corresponding to these two old arcs). Since there are two choices of orientation (using \(s\to t\) or \(t\to s\)), the total number of destroyed edges arising from length-\(2\) paths is at most \(2W_{n,p}(2)\).

Similarly, each \(s\)--\(t\) path of length \(3\) yields, together with \(s\to t\) (resp.\ \(t\to s\)), a directed \(4\)-cycle. Such a directed \(4\)-cycle contains three arcs that are already present in \(G_{n,p}-e\), and hence can destroy at most \(\binom{3}{2}=3\) pre-existing polytope edges among the corresponding three vertices. Again accounting for both orientations, the total number of destroyed edges arising from length-\(3\) paths is at most \(6W_{n,p}(3)\).

\smallskip
Combining the two parts and using that\(\bigl| (K_{n,p})^+_e - (K_{n,p})^-_e \bigr|\) is bounded by the sum of the number of created and destroyed edges, we obtain
\[
\bigl| (K_{n,p})^+_e - (K_{n,p})^-_e \bigr|
\le 4E_{n,p} + 2W_{n,p}(2) + 6W_{n,p}(3).
\]
Multiplying by \(\sqrt{pq}\) yields the asserted bound.
\end{proof}

In view of a future application of \Cref{prop:2ndOrderPoincare}, we need to bound the fourth moments of the random variables $E_{n,p}$, $W_{n,p}(2)$ and $W_{n,p}(3)$.

\begin{lemma}\label{l:W123_bound_sep}
    Let \( E_{n,p} \) be the number of arcs in \( G_{n,p} \), and let \( W_{n,p}(2) \) and \( W_{n,p}(3) \) denote the number of \( s \)--\( t \) paths of length 2 and 3, respectively, for a fixed pair \( e=st \in \binom{[n]}{2} \).
    Then, as \(n \to \infty\), the following bounds hold:
    \begin{align*}
        \EE[E_{n,p}^4] &\simeq
        \begin{cases}
            n^8 p^4, & \text{if } p \gg n^{-2}, \\
            n^2 p,   & \text{if } p \lesssim n^{-2},
        \end{cases} \\
        \EE[W_{n,p}(2)^4] &\simeq
        \begin{cases}
            n^4 p^8, & \text{if } p \gg n^{-1/2}, \\
            n p^2,   & \text{if } p \lesssim n^{-1/2},
        \end{cases} \\
        \EE[W_{n,p}(3)^4] &\simeq
        \begin{cases}
            n^8 p^{12}, & \text{if } p \gg n^{-2/3}, \\
            n^2 p^3,   & \text{if } p \lesssim n^{-2/3}.
        \end{cases}
    \end{align*}
    In particular, using  \Cref{Dfbound_SEP}, we obtain
    \[
        \EE[\left(\D_{e} K_{n,p}\right)^4]
        \lesssim
        p^2q^2 \cdot
        \begin{cases}
            n^8 p^4, & \text{if } p \gg n^{-2}, \\
            n^2 p,   & \text{if } p \lesssim n^{-2}.
        \end{cases}
    \]
\end{lemma}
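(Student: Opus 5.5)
The approach is to recognise each of the three quantities as a binomial random variable and read off its fourth moment from the standard binomial moment asymptotics. Concretely, \(E_{n,p}\sim\mathrm{Bin}(N,p)\) with \(N=\binom{n}{2}\simeq n^2\). Each potential \(s\)--\(t\) path of length \(2\) is determined by its middle node \(v\notin\{s,t\}\) and needs the two arcs \(sv,vt\). These arc sets are disjoint for different \(v\), so \(W_{n,p}(2)\sim\mathrm{Bin}(n-2,p^2)\). For length \(3\) the paths \(s,u,v,t\) (ordered pairs of distinct \(u,v\)) are no longer independent, since they may share the arcs \(su\) or \(vt\). There I would not use a binomial identity and would bound the moment directly.

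For \(Y\sim\mathrm{Bin}(m,r)\) I would use the formula
\[
\EE[Y^4]=\sum_{j=1}^{4} S(4,j)\,(m)_j r^j,
\]
where \(S(4,j)\) are Stirling numbers of the second kind and the terms \((m)_j r^j\) are the factorial moments. Hence \(\EE[Y^4]\simeq mr+(mr)^4\), i.e.\ \(\EE[Y^4]\simeq (mr)^4\) if \(mr\gg1\) and \(\simeq mr\) if \(mr\lesssim 1\); the middle terms \((mr)^2,(mr)^3\) are dominated by one of the two extremes. Plugging in \(mr=n^2p\) for \(E_{n,p}\) gives the threshold \(p\asymp n^{-2}\) and the values \(n^8p^4\) or \(n^2p\). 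Plugging in \(mr=np^2\) for \(W_{n,p}(2)\) gives the threshold \(n^{-1/2}\) and the values \(n^4p^8\) or \(np^2\).

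For \(W=W_{n,p}(3)=\sum_{\gamma}\indicator_\gamma\) over the \((n-2)_2\simeq n^2\) potential paths, I would expand
\[
\EE[W^4]=\sum_{\gamma_1,\dots,\gamma_4}\PP(\text{all }\gamma_i\text{ present}).
\]
I would group the terms by the union graph \(H\) of the four paths, which contributes \(\simeq n^{v(H)}p^{e(H)}\), where \(v(H)\) counts the internal nodes used and \(e(H)\) the arcs. Every such \(H\) is a union of \(k\in\{1,\dots,4\}\) distinct \(s\)--\(t\) \(3\)-paths. Two extremes give the claimed orders: \(k=1\) gives \(n^2p^3\), and four arc-disjoint paths give \(n^8p^{12}\). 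The intermediate configurations, for example two paths sharing \(su\), contribute \(n^3p^5\), which satisfies \(n^3p^5\le n^2p^3+n^8p^{12}\)-type bounds via weighted AM--GM. The general step is to show that for every union \(H\) one has \(n^{v}p^{e}\lesssim n^2p^3+n^8p^{12}\). Writing \(x=n^{2/3}p\), I would check that each such term equals \((n^2p^3)^{\alpha}(n^8p^{12})^{\beta}\) times a factor \(n^{-\delta}\) with \(\delta\ge0\), where \(\alpha+\beta\le1\). A term of that form is bounded by the maximum of the two extremes.

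Lower bounds follow since the two extreme terms always appear in the sum with nonnegative contributions. This yields the threshold \(p\asymp n^{-2/3}\), which is exactly where \(n^2p^3\asymp1\).

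Finally, for the gradient I would apply \Cref{Dfbound_SEP} together with \((a+b+c)^4\le 27(a^4+b^4+c^4)\), which gives
\[
\EE[(\D_eK_{n,p})^4]\lesssim p^2q^2\bigl(\EE[E_{n,p}^4]+\EE[W_{n,p}(2)^4]+\EE[W_{n,p}(3)^4]\bigr).
\]
It then remains to verify that the \(E_{n,p}\) term dominates in every regime: \(n^4p^8\le n^8p^4\), \(np^2\le n^2p\), \(n^8p^{12}\le n^8p^4\), and \(n^2p^3\le n^2p\), with the relevant regimes compared piecewise. The main obstacle is the case analysis of overlap patterns for \(W_{n,p}(3)\). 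The key fact needed there is that every partially overlapping union is sandwiched between the two extremes, so the bound \(n^vp^e\lesssim n^2p^3+n^8p^{12}\) must be checked across all union shapes.
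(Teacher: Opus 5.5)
Your proposal follows the paper's proof: binomial fourth moments for \(E_{n,p}\) and \(W_{n,p}(2)\), then an overlap expansion for \(W_{n,p}(3)\) in which the identical and the fully arc-disjoint \(4\)-tuples dominate, then \((a+b+c)^4\le 27(a^4+b^4+c^4)\) with the \(E_{n,p}\) term dominating, and your handling of the intermediate overlaps is more explicit than the paper's, which only asserts they are of intermediate order. The check you leave open closes quickly, with one correction: each internal node of the union \(H\) is joined to \(s\) or \(t\) by its own arc and also lies on at least one middle arc, so \(e(H)\ge \tfrac32 v(H)\), hence \(n^{v}p^{e}\le (n^2p^3)^{v/2}\le n^2p^3+n^8p^{12}\) for \(2\le v\le 8\), and in your \(\alpha,\beta\) formulation the condition that makes the bound work is \(\alpha+\beta=1\) (equivalently \(3\le e(H)\le 12\)), not \(\alpha+\beta\le 1\), since for \(n^{2/3}p<1\) a sub-convex combination would not be bounded by \(n^2p^3\).
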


\begin{proof}
    We begin with the fourth moment of \( E_{n,p} \), the number of arcs in \( G_{n,p} \). Since \( E_{n,p} \) follows a binomial distribution \( \mathrm{Bin} \left( \binom{n}{2}, p \right) \), we use the standard formula for the fourth moment of a binomial variable \( X \sim \mathrm{Bin}(m, p) \),
    \[
        \EE[X^4] = m^4 p^4 + 6 m^3 p^3 + 7 m^2 p^2 + m p.
    \]
    This immediately yields
    \[
        \EE[E_{n,p}^4] \simeq n^8 p^4 + n^2 p,
    \]
    which implies the stated cases.
    
    Next, consider \( W_{n,p}(2) \), the number of \( s \)--\( t \) paths of length two for a fixed pair \( e = st \). Each such path corresponds to a node \( r \in [n] \setminus \{s,t\} \) such that both arcs \( \{s,r\} \) and \( \{r,t\} \) are present in the graph. Since these paths are edge-disjoint, the corresponding indicator variables are independent. Therefore, \( W_{n,p}(2) \sim \mathrm{Bin}(n - 2, p^2) \).
    Applying the fourth moment formula again yields
    \[
        \EE[W_{n,p}(2)^4] \simeq (np^2)^4 + np^2,
    \]
    recovering the claimed bounds.
    
    We now turn to \( W_{n,p}(3) \). Each 3-path corresponds to a sequence \( s, v_1, v_2, t \) where the arcs \( \{s,v_1\}, \{v_1,v_2\}, \{v_2,t\} \) are present. The expected number of such paths is of order \( n^2 p^3 \).
    To estimate the fourth moment, we consider 4-tuples of such paths. The dominant contribution arises either from 4-tuples of disjoint paths or from a single path taken to the fourth power.
    \begin{itemize}
        \item If the four paths are edge-disjoint (sharing no nodes other than \(s,t\)), there are of order \( n^8 \) choices, and the probability is \( (p^3)^4 = p^{12} \). This yields a term of order \( n^8 p^{12} \).
        \item If the paths are identical (maximal overlap), the contribution is of order \( n^2 p^3 \).
    \end{itemize}
    Other overlap types (sharing some nodes or edges) yield terms of intermediate order.
    Comparing \( n^2 p^3 \) with \( n^8p^{12}=(n^2 p^3)^4 \),
    we see that if \( n^2 p^3 \gg 1 \) (equivalently \( p \gg n^{-2/3} \)), the disjoint term dominates. Otherwise, the single path term dominates. Thus,
    \[
        \EE[W_{n,p}(3)^4] \simeq n^8 p^{12} + n^2 p^3.
    \]
    
    Finally, we use the bound from \Cref{Dfbound_SEP},
    \[
        \left| \D_e K_{n,p} \right| \leq \sqrt{pq} \bigl( 4E_{n,p} + 2W_{n,p}(2) + 6W_{n,p}(3) \bigr)
    \]
    together with the elementary inequality \( (a+b+c)^4 \leq 3^3 (a^4 + b^4 + c^4) \) for \( a,b,c \geq 0 \).
    Since \( E_{n,p} \) dominates both \( W_{n,p}(2) \) and \( W_{n,p}(3) \) in the relevant regimes (or is of the same order when \(p\) is large), the total fourth moment is bounded by the contribution of \( E_{n,p} \).
    Specifically, we have \( \EE[E_{n,p}^4] \simeq n^8 p^4 + n^2 p \). Multiplying by the prefactor \( (\sqrt{pq})^4 = p^2 q^2 \), we obtain
    \[
        \EE[\left( \D_e K_{n,p} \right)^4] \lesssim p^2q^2 (n^8 p^4 + n^2 p),
    \]
    which simplifies to the target rate. This completes the proof.
\end{proof}

Now that since we have bounded the fourth moment of the first-order discrete gradients, the only thing left to apply 
\Cref{prop:2ndOrderPoincare} are the second-order discrete gradients:

\begin{lemma}\label{DDbounds_SEP}
    Let $K_{n,p}$ be as before. Let $e,f\in \binom{[n]}{2}$ with $e\neq f$. Then,
    for $e\cap f = \emptyset$ it holds
    \begin{align*}
        \E\left[(\D_e\D_fK_{n,p})^4\right]
        &\lesssim p^4q^4,
    \intertext{and for $\vert e\cap f\vert= 1$ we have the bounds}
        \E\left[(\D_e\D_fK_{n,p})^4\right]
        &\lesssim
        p^4q^4 + n^4p^{12}q^4
        \simeq
        \begin{cases}
            n^4p^{12}q^4, &\text{if } p \gg n^{-\frac12},\\
            p^4q^4, &\text{if } p \lesssim n^{-\frac12}.
        \end{cases}
    \end{align*}
\end{lemma}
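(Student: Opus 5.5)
We write \(\D_e\D_f K_{n,p} = pq\,\Delta_{e,f}\), where
\[
\Delta_{e,f}\coloneqq K^{++}-K^{+-}-K^{-+}+K^{--}
\]
and the superscripts indicate whether \(e\) and \(f\) are added to or removed from \(G_{n,p}\). It therefore suffices to show that \(\E[\Delta_{e,f}^4]\lesssim 1\) when \(e\cap f=\emptyset\), and that \(\E[\Delta_{e,f}^4]\lesssim 1+n^4p^8\) when \(|e\cap f|=1\). The plan is to expand \(K\) as the sum of indicators \(\indicator_{\{\arr{a},\arr{b}\}}\) from the proof of \Cref{variance_sep}, and to note that the mixed difference of an indicator vanishes unless that indicator genuinely depends on both \(e\) and \(f\). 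By \Cref{prop:EdgeCharSEP}, the indicator for a pair of directed arcs \(\{\arr a,\arr b\}\) depends on an arc \(x\) only if \(x\in\{a,b\}\) or \(x\) lies on some directed \(3\)- or \(4\)-cycle through \(\arr a,\arr b\). So only pairs \(\{a,b\}\) for which both \(e\) and \(f\) lie in \(\{a,b\}\cup C\) contribute, where \(C\) ranges over the cycles completing them. Each surviving term contributes at most \(4\) in absolute value.

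The first step is to classify the surviving pairs \(\{a,b\}\). There are three cases.
\begin{itemize}
\item \(\{a,b\}=\{e,f\}\). There are at most \(O(1)\) orientations, giving an \(O(1)\) contribution.
\item Exactly one of \(a,b\) equals \(e\) (or \(f\)), and the other arc must lie on a \(3\)- or \(4\)-cycle together with \(f\) (resp.\ \(e\)).
\item Neither \(a\) nor \(b\) is in \(\{e,f\}\), and both \(e\) and \(f\) lie on cycles through \(a,b\).
\end{itemize}
Since a cycle has length at most \(4\) and must contain \(a\), \(b\) and the relevant arcs among \(e,f\), the node set of the relevant configuration is confined to \(e\cup f\) plus a bounded number of free nodes.

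\textbf{Case \(e\cap f=\emptyset\).} The nodes of \(e\cup f\) are four distinct nodes. For \(a\) adjacent to \(e\) and \(b\), the arcs \(e,f\) must all fit on cycles of length at most \(4\), and I expect every arc of such a cycle to have both endpoints in \(e\cup f\). Hence only \(O(1)\) choices of \(\{a,b\}\) survive, \(|\Delta_{e,f}|\le C\) deterministically, and the bound \(p^4q^4\) follows.

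\textbf{Case \(e=st\), \(f=tu\).} Here a pair \(\{a,b\}\) can survive through a free node \(w\). For example, the directed \(4\)-cycle \(s\to t\to u\to w\to s\) contains \(e\) and \(f\), and the pair \(\{uw,ws\}\) depends on both \(e\) and \(f\). Such a contribution is nonzero only if both arcs \(uw\) and \(ws\) are present, i.e.\ only if \(w\) is the middle node of a \(u\)--\(s\) path of length \(2\). Pairs with one arc in \(\{e,f\}\) and the other among \(su\), \(uw\), \(ws\) are handled similarly: they require a \(2\)-path or a single additional arc. Collecting these cases gives the almost sure bound
\[
|\Delta_{e,f}|\le C\bigl(1+W'(2)+\text{(terms like }\textstyle\sum_w \indicator_{uw}\indicator_{ws}\text{)}\bigr),
\]
where \(W'(2)\sim\mathrm{Bin}(n-3,p^2)\) counts \(u\)--\(s\) paths of length \(2\). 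By \Cref{l:W123_bound_sep}, \(\E[W'(2)^4]\simeq n^4p^8+np^2\). Since \(np^2\lesssim 1+n^4p^8\), multiplying by \(p^4q^4\) yields \(p^4q^4+n^4p^{12}q^4\). The case split at \(p\simeq n^{-1/2}\) is then just a comparison of the two terms.

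The main obstacle is the combinatorial bookkeeping: we must verify that no surviving pair requires two free nodes or a single uncontrolled free arc. Any of these would produce terms like \(n^2p^3\) or \(E_{n,p}\) and spoil the bound. I would check this by enumerating, for each cycle length, where \(e\) and \(f\) can sit relative to \(a\) and \(b\). The key observation is that a \(4\)-cycle containing the \(2\)-path \(e\cup f\) has exactly one free node, and a cycle containing both \(a,b\) and \(e,f\) has at most \(4\) arcs.
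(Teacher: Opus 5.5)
Your reduction to counting pairs $\{\arr a,\arr b\}$ whose indicator has a nonzero mixed difference is sound. It is essentially the paper's decomposition into $S_{e,f}$, $R_{e,f}$, $T_e^{(f)}$, $T_f^{(e)}$, $T_{e,f}$. The gap is in the bookkeeping step you flagged, and your predictions about it are false.

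The problematic pairs have $a,b\notin\{e,f\}$, with one cycle through $\arr a,\arr b$ completed using $e$ and a \emph{different} cycle completed using $f$ (the paper's $R_{e,f}$).

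\textbf{Disjoint case.} Take $e=\{1,2\}$, $f=\{3,4\}$ and any $w\notin\{1,2,3,4\}$. The directed pair $3\to w$, $w\to 1$ lies on two $4$-cycles:
\begin{itemize}
\item $3\to w\to 1\to 2\to 3$, completed by $e$ and $23$;
\item $3\to w\to 1\to 4\to 3$, completed by $14$ and $f$.
\end{itemize}
Suppose $23,14,3w,w1$ are present, and neither $13$ nor any further $1$--$3$ path of length $2$ is present. Then this pair's mixed difference equals $1$.

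The arcs $3w,w1$ do not have both endpoints in $e\cup f$, so your expectation that they do is wrong. There are $n-4$ candidates for $w$. Counting them naively gives $[23][14]\cdot\mathrm{Bin}(n-4,p^2)$, whose fourth moment $\simeq n^4p^{10}+np^4$ is unbounded. So $|\Delta_{e,f}|\le C$ does not follow from what you wrote.

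\textbf{Intersecting case.} Let $e=st$, $f=tu$ and $w,x\notin\{s,t,u\}$. The pair $t\to w$, $w\to x$ survives when $tw,wx,xs,xu$ are present, $xt$ is absent, and there is no further $t$--$x$ path of length $2$. Its two cycles are completed by $\{xs,e\}$ and by $\{xu,f\}$. So a configuration with two free nodes does occur, contrary to your prediction. Naive counting over $(w,x)$ gives a fourth moment of order $n^8p^{16}$, which exceeds $n^4p^8$ for $p\gg n^{-1/2}$.

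\textbf{The missing idea} is the exclusion argument the paper uses for $R_{e,f}$ and $T_{e,f}$. Suppose two distinct $2$-paths $x_1 - w - x_2$ and $x_1 - w' - x_2$ with the same endpoints lie in $\bar G=G-e-f$. They form a $4$-cycle in $\bar G$, which destroys the polytope edge of each pair regardless of $e$ and $f$. Hence, for fixed endpoints, at most one middle node $w$ can contribute.
\begin{itemize}
\item In the disjoint case the endpoints must be one node of $e$ and one node of $f$, with $w\notin e\cup f$. This gives an $O(1)$ bound.
\item In the intersecting case one endpoint is the common node $t$, and the other is the midpoint $x$ of an $s$--$u$ path of length $2$. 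This gives $O(1+W'(2))$.
\end{itemize}
With this argument inserted, your approach goes through and yields the stated bounds; without it, the counting step does not close.
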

\begin{proof}\label{p:DDbounds_SEP}

    To ease notation and avoid double indices write $K\coloneqq K_{n,p}$ and $G \coloneqq G_{n,p}$.
    Let $\Bar{G}= G-e-f$ be the graph $G$ without the arcs $e,f$.
    Note that $\Bar{G}$ corresponds to the random variable $K_{e,f}^{-,-} \coloneqq ( K_e^- )_f^-$.
    
    Our goal is to express $K_{e,f}^{+,+}, K_{e,f}^{+,-}$ and $ K_{e,f}^{-,+} $ in terms of $K_{e,f}^{-,-}$.
    Our strategy is to analyze how the addition of $e$ and $f$ to $\Bar{G}$ can influence the number of edges of $\SEP_G$ and to then bound each of the occurring cases separately.
    To precisely name those cases, we define a random variable for each case.
    The relations among all random variables that will be defined in the following
    are visualized in \Cref{fig:DDvarNames}.
    \begin{figure}[htbp]
            \tikzset{
        SEPbubble/.style={circle,fill=RUBgray!70!white,minimum width=2cm},
        typeR/.style={Stealth-,draw,line width=1.5pt,color=OSNAred},
        typeS/.style={-Stealth,draw,line width=1.5pt,color=RUBgreen},
        typeT/.style={Stealth-,draw,line width=1.5pt,color=RUBblue},
        arrowlabel/.style={inner sep=0.2pt,auto=right},
    	every picture/.style={xscale=4,yscale=2.2},
    }%
    \centering
    \begin{tikzpicture}
        \path ( 0,0) node[name=G  ,SEPbubble] {$\SEP_{\Bar{G}}$};
        \path (-1,1) node[name=Ge ,SEPbubble] {$\SEP_{\Bar{G}+e}$};
        \path ( 1,1) node[name=Gf ,SEPbubble] {$\SEP_{\Bar{G}+f}$};
        \path ( 0,2) node[name=Gef,SEPbubble] {$\SEP_{\Bar{G}+e+f}$};
        \path[typeS,bend right= 25] (G)   edge node[arrowlabel]    {$S_e$}       (Ge);
        \path[typeR,bend right= 25] (Ge)  edge node[arrowlabel,']  {$R_{e,f}$} (G) ;
        \path[typeR,bend right= 40] (Ge)  edge node[arrowlabel]    {$R_e^{(f)}$}   (G) ;
        \path[typeS, bend left= 25] (G)   edge node[arrowlabel,']  {$S_f$}       (Gf);
        \path[typeR, bend left= 25] (Gf)  edge node[arrowlabel]    {$R_{e,f}$} (G) ;
        \path[typeR, bend left= 40] (Gf)  edge node[arrowlabel,']  {$R_f^{(e)}$}   (G) ;
        \path[typeT,bend right= 25] (Gef) edge node[arrowlabel,']  {$T_{e,f}$} (Ge) ;
        \path[typeT,bend right= 40] (Gef) edge node[arrowlabel]    {$T_e^{(f)}$}   (Ge) ;
        \path[typeT, bend left= 25] (Gef) edge node[arrowlabel]    {$T_{e,f}$} (Gf) ;
        \path[typeT, bend left= 40] (Gef) edge node[arrowlabel,']  {$T_f^{(e)}$}   (Gf) ;
        \path[typeS, bend left=  0] (G)   edge node[arrowlabel]    {$S_{e,f}$}   (Gef);
    \end{tikzpicture}
        \caption{Relevant variables in the proof of \Cref{DDbounds_SEP}}
        \label{fig:DDvarNames}
    \end{figure}
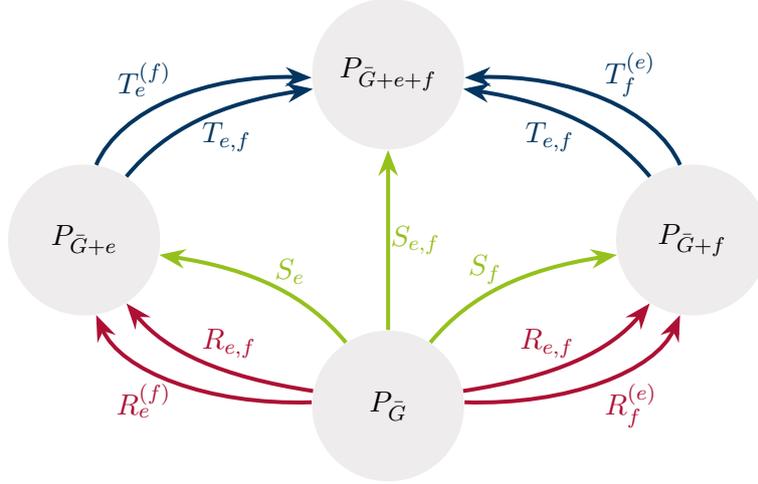
    
    Let $\{i,j\} = \{e,f\}$.
    By adding the arc $i$ to $\Bar{G}$,
    some new edges in $\SEP_{\Bar{G}}$ can appear and some previously present edges can disappear.
    We denote the number of new edges by $S_i$, i.e.,
    the number of edges of $\SEP_{\Bar{G}+i}$ that are not edges of $\SEP_{\Bar{G}}$.
    The number of disappearing edges is further split:
    $R_i^{(j)}$ denotes the number of edges that disappear
    if $i$ is added to $\Bar{G}$ but not if $j$ is added instead.
    $R_{e,f}$ denotes the number of edges that disappear
    if one of the arcs $e$ or $f$ is added to $\Bar{G}$, no matter which. 
    
    Some edges may only appear or disappear if both arcs, $e$ and $f$, are added to $\Bar{G}$,
    but not if only one of the arcs is added.
    Let $S_{e,f}$ denote the number of edges that appear in this case, i.e.,
    the number of edges of $\SEP_{\Bar{G}+e+f}$ that are neither in $\SEP_{\Bar{G}+e}$, $\SEP_{\Bar{G}+f}$, nor $\SEP_{\Bar{G}}$.
    Conversely,
    let $T_{e,f}$ be the number of edges that are in $\SEP_{\Bar{G}+e}$ and $\SEP_{\Bar{G}+f}$ (and thus also in $\SEP_{\Bar{G}}$), but not in $\SEP_{\Bar{G}+e+f}$.
    
    Finally, if $\Bar{G}+i$ is completed to $\Bar{G}+i+j = \Bar{G}+e+f$ by adding arc $j$,
    again some edges in the corresponding symmetric edge polytope can appear or disappear.
    Those disappearing are mostly already covered by $R_j^{(i)}$.
    We only have to take into account that those edges that previously appeared by adding $i$ (counted in $S_i$) can now disappear again by adding $j$.
    The number of these edges is denoted by $T_i^{(j)}$,
    i.e., the number of edges of $\SEP_{\Bar{G}+i}$ that are not edges of $\SEP_{\Bar{G}+i+j} = \SEP_{\Bar{G}+e+f}$ nor $\SEP_{\Bar{G}}$.
    On the other hand, by adding $j$ to $\Bar{G}+i$ some new edges may appear.
    The number of these edges is already covered by $S_j - T_j^{(i)}$.
    
    Bringing all these changes together,
    an application of the inclusion--exclusion principle leaves us with
    \begin{align*}
        K_{e,f}^{+,+} &= K_{e,f}^{-,-}
                            + S_e + S_f + S_{e,f}
                            - \bigl( R_e^{(f)} + R_f^{(e)} + R_{e,f} \bigr)
                            - \bigl( T_e^{(f)} + T_f^{(e)} + T_{e,f} \bigr),\\
        K_{e,f}^{+,-} &= K_{e,f}^{-,-} + S_e - \bigl( R_e^{(f)} + R_{e,f} \bigr),\\
        K_{e,f}^{-,+} &= K_{e,f}^{-,-} + S_f - \bigl( R_f^{(e)} + R_{e,f} \bigr).
    \end{align*}
    Thus,
    \begin{align}
        \D_e\D_f K
        &= pq \bigl( K_{e,f}^{+,+} - K_{e,f}^{+,-} - K_{e,f}^{-,+} + K_{e,f}^{-,-} \bigr) \nonumber \\
        &= pq \bigl( S_{e,f} - R_{e,f} - T_e^{(f)} - T_f^{(e)} - T_{e,f} \bigr). \label{proof_DDSEP_Konstanten}
    \end{align}
    We now need to bound each of these random variables.
    If necessary, we will distinguish cases based on whether $e$ and $f$ intersect.
    
    \begin{boundingcases}
    
    \item[$S_{e,f}$]\label{bound:Sef}
        This random variable denotes the number of those edges that only appear if $e$ and $f$ are both added to $\Bar{G}$.
        In particular, the relevant edges of $\SEP_{\Bar{G}+e+f}$ are those between vertices of the antipodal pairs corresponding to $e$ and $f$.
        Since $e$ and $f$ each give rise to exactly two vertices of the polytope, the almost sure bound $S_{e,f}\leq 4$ follows immediately.
        
    \item[$R_{e,f}$]\label{bound:Ref}
        $R_{e,f}$ is the number of edges of $\SEP_{\Bar{G}}$ that are neither in $\SEP_{\Bar{G}+e}$ nor $\SEP_{\Bar{G}+f}$.
        These correspond to pairs of directed arcs $\arr{m_1}, \arr{m_2}$ that do not lie on a $3$- or $4$-cycle in $\Bar{G}$ but do so in $\Bar{G}+e$ and $\Bar{G}+f$.
        
        In particular, for any such pair $\arr{m_1}, \arr{m_2}$ and $i \in \{e,f\}$ there has to exist a unique $3$- or $4$-cycle $C_i$ in $\bar{G}+i$ containing $m_1$, $m_2$, and $i$,
        and these two cycles $C_e$ and $C_f$ are different. 
        Note that this is not possible if $m_1 \cap m_2=\emptyset$ since in this case $\arr{m_1}$ and $\arr{m_2}$ induce a unique directed $4$-cycle (and no $3$-cycle), so that $C_e$ and $C_f$ cannot be different.
        So, we can assume $\vert m_1\cap m_2\vert =1$.
        Let $w\in m_1\cap m_2$ be this unique node,
        and let the other nodes in $m_1$ and $m_2$ be labeled $x_1$ and $x_2$
        such that $m_1=x_1w$, $m_2=x_2w$.
        
        If there was another pair of arcs $m'_1, m'_2$ forming a $2$-path with the same endpoints as $m_1,m_2$,
        i.e., $m_1'=x_1w', m_2'=x_2w'$ with $w' \neq w$,
        then $m_1,m_2,m'_2,m'_1$ would form a $4$-cycle on the nodes $x_1,w,x_2,w'$
        and would not have given rise to any edges of $\SEP_{\bar{G}}$
        (see \Cref{fig:D_eD_f1} for a picture of the configuration).
        Hence, for any given pair of endpoints $x_1,x_2$ there is at most one pair of arcs $m_1,m_2$
        contributing to $R_{e,f}$ by $2$ (one per orientation).
        
        \begin{boundingsubcases}
        
        \item[$e\cap f = \emptyset$]
            Since $e$ and $f$ both have to share at least one node with $m_1$ or $m_2$ and since $e\cap f=\emptyset$ by assumption, $C_e$ and $C_f$ have to be $4$-cycles.
            Moreover, as each node in a cycle has to have degree $2$ and $w\in m_1\cap m_2$, it follows that $w \notin e\cup f$ (see \Cref{fig:D_eD_f2}).
            Therefore, one of the nodes $x_1,x_2$ (which are the endpoints of the $2$-path consisting of $m_1,m_2$) must be in $e$, whereas the other node has to be in $f$.
            For any given pair of endpoints there can only be at most one pair of arcs relevant for $R_{e,f}$ as two pairs of arcs with the same endpoints would form a $4$-cycle.
            Since there are $2^2$ combinations of nodes of $e$ and $f$,
            further taking into account that there are $2$ different orientations of the $2$-path connecting these nodes,
            this yields $R_{e,f} \leq 8$ almost surely.
            
            
        \item[$e\cap f=\{v\}$ for some $v\in V$]
            Let the other nodes in $e$ and $f$ be labeled $u_e$ and $u_f$
            such that $e=u_e v$ and $f=u_f v$.
            
            First, assume that $C_i$ is a $3$-cycle for an $i \in \{e,f\}$.
            This implies that $m_1,m_2$ form a $2$-path with endpoints $u_i$ and $v$.
            There can be at most one $2$-path per $i \in \{e,f\}$ with these properties, as two such paths would form a $4$-cycle.
            Taking into account two possible orientations per $2$-path, there are at most four edges of $\SEP_{\bar{G}}$ that are destroyed by adding $e$ or $f$ due to their corresponding arcs lying on a $3$-cycle with $e$ or $f$.
            
            Now, assume that $C_e$ and $C_f$ are $4$-cycles.
            Lying on a $4$-cycle with $e$ and $f$ implies that the $2$-path consisting of $m_1=x_1w$ and $m_2=x_2w$ has to share exactly one of its endpoints $x_1,x_2$ with each of $e=u_ev$ and with $f=u_fv$.
            Sharing $u_e$ and $u_f$, i.e., $\{ x_1,x_2 \}=\{ u_e,u_f \}$ would correspond to the cycles $C_e$ and $C_f$ being identical.
            Such pairs are counted for $T_{e,f}$ but not for $R_{e,f}$. 
            
            So, we can assume that $v$ is among the shared nodes, $v \in \{ x_1,x_2 \}$.
            Without loss of generality, assume that $v = x_1$,
            such that
            \(m_1=vw,m_2=x_2w\).
            By the same argument as above, there cannot exist another pair of arcs $m'_1$, $m'_2$ with \(m_1'=vw', m_2'=x_2w'\) and $w' \neq w$,
            otherwise the arcs $m_1,m_2,m'_2,m'_1$ would form a $4$-cycle (see \Cref{fig:D_eD_f3})
            which contradicts the fact that $\arr{m_1}$ and $\arr{m_2}$ (with the appropriate orientation) form an edge of $\SEP_{\Bar{G}}$.
            This means we can count at most two removed edges -- considering directions -- for each $x_2 \in [n]$. 
            In order for $x_2$ to correspond to such a pair, the arcs $u_e x_2$ and $u_f x_2$ have to be contained in $\bar{G}$,
            as shown in \Cref{fig:D_eD_f4}.
            These correspond to $u_e$-$u_f$ paths of length $2$.
            
            So, we can see that 
            \[
                R_{e,f} \leq 2 W_{n,p}(2) + 4
            \]
            holds almost surely, where \( W_{n,p}(2) \) is the number of \( u_e \)--\( u_f \) paths of length \( 2 \) in \( G \),
            as known from \Crefrange{Dfbound_SEP}{l:W123_bound_sep}.
            
            \begin{figure}[tbp]%
                \begin{subfigure}{0.45\textwidth}%
    \centering%
    \begin{tikzpicture}[line width=1.5pt, line cap=round, line join=round]
        \draw[OSNAred] (-1,1) -- (0,2) node[midway, anchor=-30, inner sep=1pt] {$m_2$};
        \draw[OSNAred] (-1,1) -- (0,0) node[midway, anchor=30, inner sep=1pt] {$m_1$};
        \node[OSNAred,left] at (-1,1) {$w$};
        \draw[RUBgreen] (1,1) -- (0,2) node[midway, anchor=210, inner sep=1pt] {$m^\prime_2$};
        \draw[RUBgreen] (1,1) -- (0,0) node[midway, anchor=150, inner sep=1pt] {$m^\prime_1$};
        \node[RUBgreen,right] at (1,1) {$w^\prime$};
        \node[OSNAred,below] at (0,0) {$x_1$};
        \node[OSNAred,above] at (0,2) {$x_2$};
        \draw[RUBblue] (0,2) -- (2,2) node[midway, above] {$e$};
        \draw[RUBblue] (0,0) -- (2,0) node[midway, below] {$f$};
    \end{tikzpicture}%
    \caption{%
        A configuration where
        {\color{RUBblue}$e$} and {\color{RUBblue}$f$}
        lie in two different $4$-cycles each with edges
        {\color{OSNAred}$m_1$} and {\color{OSNAred}$m_2$},
        respectively
        {\color{RUBgreen}$m^\prime_1$} and {\color{RUBgreen}$m^\prime_2$}.%
        \\%
    }%
    \label{fig:D_eD_f1}%
\end{subfigure}%
\hfill%
\begin{subfigure}{0.45\textwidth}%
    \centering%
    \begin{tikzpicture}[line width=1.5pt, line cap=round, line join=round]
        \draw[OSNAred, -Stealth] (-1,1) -- (0,2) node[midway, anchor=-30, inner sep=1pt] {$\arr{m_2}$};
        \draw[OSNAred, Stealth-] (-1,1) -- (0,0) node[midway, anchor=30, inner sep=1pt] {$\arr{m_1}$};
        \node[OSNAred,left] at (-1,1) {$w$};
        \node[OSNAred,below] at (0,0) {$x_1$};
        \node[OSNAred,above] at (0,2) {$x_2$};
        \draw[OSNAgold,dotted] (0,0)--(2,2);
        \draw[OSNAgold,dotted] (2,0)--(0,2);
        \draw[RUBblue] (0,2) -- (2,2) node[midway, above] {$e$};
        \draw[RUBblue] (0,0) -- (2,0) node[midway, below] {$f$};
    \end{tikzpicture}%
    \caption{%
        The configuration in Case 1 
        for bounding $R_{e,f}$.%
        \\%
        \mbox{}\\%
    }%
    \label{fig:D_eD_f2}%
\end{subfigure}%
\linebreak%
\begin{subfigure}{0.45\textwidth}%
    \centering%
    \begin{tikzpicture}[line width=1.5pt, line cap=round, line join=round]
        \draw[OSNAred] (0,0) -- (1.5,1)  node[midway, anchor=150, inner sep=1pt] {$m_1$};
        \draw[OSNAred] (2,2) -- (1.5,1) node[midway, anchor=150, inner sep=1pt] {$m_2$};
        \node[OSNAred,anchor=150] at (1.5,1) {$w$};
        \draw[RUBgreen] (0,0) -- (1,1.5) node[midway, anchor=-70] {$m^\prime_1$};
        \draw[RUBgreen] (2,2) -- (1,1.5) node[midway, anchor=-60, inner sep=1pt] {$m^\prime_2$};
        \node[RUBgreen,anchor=-45] at (1,1.5) {$w^\prime$};
        \node[RUBblue,anchor=45] at (0,0) {$v = x_1$};
        \node[OSNAgold,right] at (2,2) {$x_2$};
        \node[RUBblue,left] at (0,2) {$u_e$};
        \node[RUBblue,below] at (2,0) {$u_f$};
        \draw[RUBblue] (0,0) -- (0,2) node[midway, left] {$e$};
        \draw[RUBblue] (0,0) -- (2,0) node[midway, below] {$f$};
    \end{tikzpicture}%
    \caption{%
        A configuration showing that in Case 2,
        the edges
        {\color{RUBblue}$e$} and {\color{RUBblue}$f$}
        cannot lie in  $4$-cycles
        with both
        ${\color{OSNAred}m_1},{\color{OSNAred}m_2}$
        and
        ${\color{RUBgreen}m^\prime_1},{\color{RUBgreen}m^\prime_2}$.%
    }%
    \label{fig:D_eD_f3}%
\end{subfigure}%
\hfill%
\begin{subfigure}{0.45\textwidth}%
    \centering%
    \begin{tikzpicture}[line width=1.5pt, line cap=round, line join=round]
        \draw[OSNAred] (0,0) -- (1.5,1)  node[midway,anchor=-60] {$m_1$};
        \draw[OSNAred] (2,2) -- (1.5,1) node[midway, left] {$m_2$};
        \draw[OSNAgold,dotted] (2,0) -- (2,2) ;
        \draw[OSNAgold,dotted] (0,2) -- (2,2) ;
        \node[RUBblue,anchor=45] at (0,0) {$v = x_1$};
        \node[OSNAgold,right] at (2,2) {$x_2$};
        \node[OSNAgold,left] at (0,2) {$u_e$};
        \node[OSNAgold,below] at (2,0) {$u_f$};
        \draw[RUBblue] (0,0) -- (0,2) node[midway, left] {$e$};
        \draw[RUBblue] (0,0) -- (2,0) node[midway, below] {$f$};
    \end{tikzpicture}%
    \caption{%
        The important configuration from Case 2
        showing that $4$-cycles containing
        {\color{RUBblue}$e$} (respectively {\color{RUBblue}$f$})
        correspond to
        ${\color{OSNAgold}u_e}-{\color{OSNAgold}u_f}$ paths
        of length $2$.%
    }%
    \label{fig:D_eD_f4} %
\end{subfigure}%
%
                \caption{Several configurations that are used in the proof of \Cref{DDbounds_SEP} to bound  $ R_{e,f}$.}%
                \label{fig:D_eD_f}%
            \end{figure}
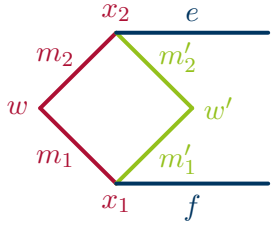
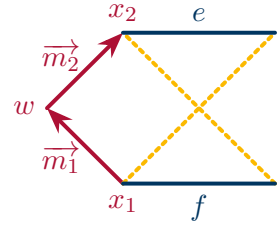
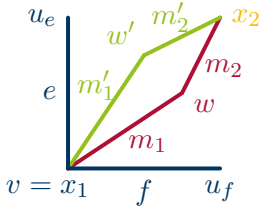
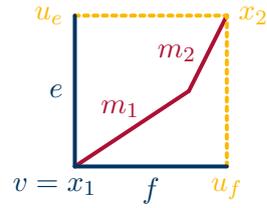%
        
        \end{boundingsubcases}
        
    \item[$T_e^{(f)},T_f^{(e)}$]\label{bound:T-f-e}
     $T_i^{(j)}$ denotes the number of edges of $\SEP_{\Bar{G}+i}$
        that are not edges of $\SEP_{\Bar{G}+i+j}=\SEP_{\Bar{G}+e+f}$ and
        that contain one vertex of the antipodal pair corresponding to $i$. 
        These edges correspond to directed arcs that have exactly one $3$- or $4$-cycle with $i$ in $\Bar{G}+i+j$
        where this unique cycle also contains $j$.
        
        \begin{boundingsubcases}
        
        \item[$e\cap f = \emptyset$]
            As $e$ and $f$ do not share a node by assumption,
            there cannot exist a $3$-cycle containing both.
            Further, there are only $4$ arcs that can lie on a $4$-cycle with both $e$ and $f$,
            with $2$ possible directions each. 
            This almost surely leaves us with $T_e^{(f)},T_f^{(e)} \leq 8$.
            
        \item[$e\cap f=\{v\}$ for some $v\in V$]
            Let $e=u_e v$ and $f=u_f v$.
            As in the previous case, we need to count the directed arcs whose only $3$- or $4$-cycles with $i$ also contain $j$. 
            There can be only one $3$-cycle containing $i$ and $j$, which contributes at most $2$ edges to $T_i^{(j)}$.
            Another arc $m$ can only contribute to $T_i^{(j)}$ if it is part of a $4$-cycle containing both $i$ and $j$ in $\Bar{G}+i+j$.
            This means $m$ has to be part of a $2$-path from $u_e$ to $u_f$.
            As there are two arcs in such a path and two possible orientations we can almost surely bound $T_i^{(j)} \leq 4 W_{n,p}(2)+2$.
        
        \end{boundingsubcases}
        
    \item[$T_{e,f}$]\label{bound:Tef}
        $T_{e,f}$ is the number of edges that are in both, $\SEP_{\Bar{G}+e}$ and $\SEP_{\Bar{G}+f}$, but not in $\SEP_{\Bar{G}+e+f}$.
        These correspond to pairs of directed arcs (not containing $e$ or $f$) that lie on a unique $3$-or $4$-cycle in $\Bar{G}+e+f$ and this one contains both $e$ and $f$.
        Note that, in fact, this unique cycle must be a $4$-cycle, since there are $4$ different arcs involved.
        
        \begin{boundingsubcases}
        
        \item[$e\cap f = \emptyset$]
            Here, a similar argument as for $T_i^{(j)}$ applies.
            There are only two possible $4$-cycles containing $e$ and $f$, with $2$ directions each.
            This gives $T_{e,f}\leq 4$ almost surely. 
            
        \item[$e\cap f=\{v\}$ for some $v\in V$]
            Similar to the discussion for $R_{e,f}$,
            if two or more different pairs of arcs would form a $4$-cycle with $e$ and $f$ in $\Bar{G}+e+f$,
            then these two pairs of arcs would also form a $4$-cycle with each other in $\Bar{G}+e$ and $\Bar{G}+f$.
            The corresponding vertices would then not form an edge in $\SEP_{\Bar{G}+e}$ nor $\SEP_{\Bar{G}+f}$.
            Therefore, there can only be at most one relevant pair of arcs in $\Bar{G}$,
            leading to the almost sure bound $T_{e,f}\leq 2$,
            taking into account the two orientations.
            
        \end{boundingsubcases}
        
    \standarditem[Conclusion.]
        Putting these bounds together yields that almost surely
        \begin{align*}
            \vert \D_e \D_f K \vert
            &\leq  pq (4+8+8+8+4)
            = 32pq
            \simeq pq
        \end{align*}
        for $e\cap f = \emptyset$,
        and almost surely
        \begin{align*}
            \vert \D_e \D_f K \vert
            &\leq  pq \Bigl(
                4
                + \bigl( 2 W_{n,p}(2) + 4 \bigr)
                + \bigl( 4 W_{n,p}(2) + 2 \bigr)
                + \bigl( 4 W_{n,p}(2) + 2 \bigr)
                + 2
                \Bigr)
            \\
            &= pq \bigl( 10 W_{n,p}(2) + 8 \bigr)
        \end{align*}
        for $\vert e\cap f\vert= 1$.
        Thus, in the latter case, by \Cref{l:W123_bound_sep} we obtain
        $$
            \E\left[(\D_e\D_f K)^4\right]
            \lesssim 
            p^4q^4
            \begin{cases}
                n^4 p^8, & \text{if } p \gg n^{-1/2}, \\
                1,       & \text{if } p \lesssim n^{-1/2}.
            \end{cases}
        $$
        This completes the proof.
        \qedhere
    
    \end{boundingcases}
    
\end{proof}

\subsection{Central limit theorem}

Bringing the results developed so far in this section together yields the following central limit theorem for the number of edges in the random symmetric edge polytope.

\begin{thm}\label{CLT_SymmetricEdge}
    Let \( K_{n,p} \) be the number of edges of \( \SEP_{n,p} \), and \( N \) be a standard Gaussian random variable.
    If \( p \gg n^{-2} \), then
    \[
        d_K\left(\frac{K_{n,p}-\EE[K_{n,p}]}{\sqrt{\var(K_{n,p})}}, N\right)
        \lesssim
        \min \Bigl\{
            \frac{1}{n\sqrt{pq}(1-\sqrt{2}p)^2}
            \;,\;
            \frac{1}{\sqrt{pq}}
        \Bigr\}.
    \]
    In particular, if additionally \( q \gg n^{-2} \) and \( |p-\frac{1}{\sqrt{2}}| \gg n^{-1/2} \), then \( K_{n,p} \) satisfies a central limit theorem as \( n \to \infty \).
\end{thm}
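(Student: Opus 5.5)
We apply \Cref{prop:2ndOrderPoincare} to the normalized variable \(F \coloneqq (K_{n,p}-\EE[K_{n,p}])/\sqrt{\var(K_{n,p})}\). It is a function of the \(m=\binom{n}{2}\) Rademacher variables \(X_e\), one per potential arc, each with \(p_e=p\) and \(q_e=q\). The discrete gradients scale as \(\D_e F = \D_e K_{n,p}/\sqrt{\var(K_{n,p})}\) and \(\D_\ell\D_e F = \D_\ell\D_e K_{n,p}/\sqrt{\var(K_{n,p})}\). Write \(\sigma^2\coloneqq\var(K_{n,p})\). The plan is to bound each \(B_i\) by \(\sigma^{-4}\) times a polynomial in \(n,p,q\). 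We use the fourth moments from \Cref{l:W123_bound_sep} and \Cref{DDbounds_SEP}, together with Cauchy--Schwarz or Hölder to split mixed moments, e.g.\ \(\EE[(\D_jF)^2(\D_kF)^2]\le \sqrt{\EE[(\D_jF)^4]\EE[(\D_kF)^4]}\). For the variance we use the lower bound \(\sigma^2\gtrsim n^6p^3q(1-\sqrt2 p)^2 + n^5p^3q\) from \Cref{variance_sep}, valid for \(p\gg n^{-2}\).

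\textbf{Counting and bounding the terms.} The sums in \(B_1,B_2\) run over triples \((j,k,\ell)\) of arcs. \Cref{DDbounds_SEP} gives \(\D_\ell\D_j K\) only for \(\ell\neq j\); we also use that \(\D_\ell\D_jK\) vanishes unless \(\ell\) and \(j\) are close. Strictly, the lemma bounds the moment for all pairs, so we split pairs into disjoint and adjacent ones. Disjoint pairs number \(\simeq n^4\) and give a fourth moment of order \((pq)^4\). Adjacent pairs number \(\simeq n^3\) and give a fourth moment of order \((pq)^4(1+n^4p^8)\). First-order gradients satisfy \(\EE[(\D_eK)^4]\lesssim p^2q^2 n^8p^4\) for \(p\gg n^{-2}\).

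Inserting these gives the following bounds.
\[
B_3\lesssim n^2 (pq)^{-1}p^2q^2n^8p^4/\sigma^4 = n^{10}p^5q/\sigma^4.
\]
\(B_4\) has \(n^4\) terms of size \((pq)^{-1}\cdot pq\,n^4p^2\cdot(pq)^2\), up to the adjacent correction. \(B_5\) has about \(n^4\) terms of size \((pq)^{-2}(pq)^4\). For \(B_1,B_2\), use the crude bound \(\sqrt{\EE[(\D_\ell\D_jF)^4]\EE[(\D_\ell\D_kF)^4]}\). In \(B_1\) we may also restrict to triples in which \(\ell\) interacts with both \(j\) and \(k\); without this restriction the count of \(n^6\) triples is too large. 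Under it, \(B_1\) is of order \(n^6\cdot pq\,n^4p^2\cdot (pq)^2/\sigma^4 = n^{10}p^5q^3/\sigma^4\).

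\textbf{Comparison with the variance.} With \(\sigma^4 \simeq n^{12}p^6q^2(1-\sqrt2p)^4\), the square roots of the dominant terms \(B_1,B_3\) give
\[
\sqrt{n^{10}p^5q/\sigma^4}\simeq \frac{1}{n\sqrt{pq}(1-\sqrt2p)^2}.
\]
This is the first expression in the minimum. Using instead \(\sigma^2\gtrsim n^5p^3q\) gives \(\sqrt{n^{10}p^5q/(n^{10}p^6q^2)}=1/\sqrt{pq}\), the second expression. It remains to check that the adjacent-pair corrections \(n^4p^8\) and the regime \(p\lesssim n^{-1/2}\) produce no worse terms. For instance, \(B_5\) gives \(n^4(pq)^2/\sigma^4\) plus \(n^3(pq)^2n^4p^8/\sigma^4\), and both are smaller. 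The final claim follows because \(|p-1/\sqrt2|\gg n^{-1/2}\) makes \(n(1-\sqrt2p)^2\to\infty\). Together with \(pq\) bounded away from zero (or \(n\sqrt{pq}\to\infty\)), this drives the bound to zero; the condition \(q\gg n^{-2}\) handles \(p\to1\).

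\textbf{Main obstacle.} The delicate step is \(B_1\) and \(B_2\). They require the locality structure of \(\D_\ell\D_jK\), namely which pairs \((\ell,j)\) have nonzero second gradient, to reduce the triple sum from \(n^6\) to the right count. They also require tracking the adjacent-pair contributions \(n^4p^{12}q^4\) across regimes. I would first verify that \(\D_\ell\D_jK=0\) whenever \(\ell\) and \(j\) are far apart and no short-cycle interaction is possible. If that fails, I would instead exploit the fact that the random bound \(10W_{n,p}(2)+8\) depends only on local paths, and count configurations directly.
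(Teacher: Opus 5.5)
Your proposal follows the paper's proof. You apply \Cref{prop:2ndOrderPoincare} with the fourth-moment bounds of \Cref{l:W123_bound_sep} and \Cref{DDbounds_SEP}, split the sums into disjoint and adjacent arc pairs, note that $B_3\lesssim n^{10}p^5q/\sigma^4$ dominates (with $B_1\lesssim n^{10}p^5q^3/\sigma^4$ over all $n^6$ triples), and insert the two-term variance lower bound of \Cref{variance_sep} to obtain the minimum.

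Drop your ``main obstacle'', because the locality you want to verify does not hold. $\D_\ell\D_jK$ does not vanish for disjoint arcs; generically it is of order $pq$, since the vertices of $\ell$ and $j$ span a polytope edge unless a $4$-cycle through both is present (the $S_{e,f}$ term). The check is also unnecessary: the unrestricted $n^6$ count already gives the bound for $B_1$ you wrote down, and $n^6p^3q^3/\sigma^4$ for $B_2$, both dominated by $B_3$ when $p\gg n^{-2}$.
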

\begin{proof}[Proof of \Cref{CLT_SymmetricEdge}]
    We apply \Cref{prop:2ndOrderPoincare} and bound the terms \( B_1, \dots, B_5 \) using the moment estimates for discrete gradients derived in \Cref{l:W123_bound_sep} and \Cref{DDbounds_SEP}.
    Recall that for \( p \gg n^{-2} \), we have \( \EE[(\D_e K_{n,p})^4] \lesssim n^8 p^6 q^2 \).
    By the Cauchy--Schwarz inequality, for any \( e, f \in \binom{[n]}{2} \),
    \begin{align*}
        \EE\bigl[ (\D_e K_{n,p})^2 (\D_f K_{n,p})^2 \bigr]
        &\leq \sqrt{ \EE[(\D_e K_{n,p})^4] \cdot \EE[(\D_f K_{n,p})^4] }
        = \EE[(\D_e K_{n,p})^4]
        \lesssim n^8 p^6 q^2.
    \end{align*}
    Similarly, for second-order gradients, \Cref{DDbounds_SEP} implies for any \(e,g\in \binom{[n]}{2}\)
    \[
        \EE\bigl[ (\D_g \D_e K_{n,p})^4 \bigr] \lesssim
        \begin{cases}
            p^4 q^4, & \text{if } e \cap g = \emptyset, \\
            n^4 p^{12} q^4 + p^4 q^4, & \text{if } |e \cap g| = 1.
        \end{cases}
    \]
    Applying Cauchy--Schwarz to the product of second-order gradients yields
    \begin{align*}
        \EE\bigl[ (\D_g \D_e K_{n,p})^2 (\D_g \D_f K_{n,p})^2 \bigr]
        \lesssim
        \begin{cases}
            p^4 q^4, & \text{if } |g \cap e| = |g \cap f| = 0, \\
            n^2 p^8 q^4 + p^4 q^4, & \text{if } |g \cap e| + |g \cap f| = 1, \\
            n^4 p^{12} q^4 + p^4 q^4, & \text{if } |g \cap e| = |g \cap f| = 1.
        \end{cases}
    \end{align*}
    We now estimate the terms \( B_1, \dots, B_5 \) appearing in \Cref{prop:2ndOrderPoincare}. To determine the order of magnitude of these sums, we distinguish cases based on the intersection of the arc indices. We focus on the terms with the highest powers of \( n \), as these dominate the sums.

    \begin{boundingcases}
    
    \item[\( B_1 \)]
        The term \( B_1 \) captures the interaction between first-order gradients mediated by second-order gradients:
        \[
            B_1 = \sum_{e,f,g \in \binom{[n]}{2}} \sqrt{ \EE[ (\D_e K)^2 (\D_f K)^2 ] } \sqrt{ \EE[ (\D_g \D_e K)^2 (\D_g \D_f K)^2 ] }.
        \]
        Using the Cauchy--Schwarz inequality, we bound the expectations by the square root of the fourth moments. We split the sum based on the overlap of \( g \) with \( e \) and \( f \).
    
        \begin{boundingsubcases}
        
        \standarditem[Disjoint case (\( |g \cap e| = 0 \) and \( |g \cap f| = 0 \)).]
            There are of order \( n^6 \) such triplets. The fourth moment of the second-order gradient is bounded by \( \lesssim p^4 q^4 \). Thus, the summand is bounded by
            \[
                \sqrt{n^8 p^6 q^2 \cdot n^8 p^6 q^2} \cdot \sqrt{p^4 q^4 \cdot p^4 q^4} = n^8 p^6 q^2 \cdot p^4 q^4 = n^8 p^{10} q^6.
            \]
            The total contribution is \( n^6 \cdot n^8 p^{10} q^6 = n^{14} p^{10} q^6 \).
            
        \standarditem[Overlap case (\( |g \cap e| = 1 \) or \( |g \cap f| = 1 \)).]
            The dominant contribution comes from the case where \( g \) shares nodes with both \( e \) and \( f \) (e.g., \( g \) connects \( e \) and \( f \)). There are of order \( n^4 \) such terms. Using the bound \( \EE[(\D_g \D_e K)^4] \lesssim n^4 p^{12} q^4 \), the summand is bounded by
            \[
                \sqrt{n^8 p^6 q^2 \cdot n^8 p^6 q^2} \cdot \sqrt{n^4 p^{12} q^4 \cdot n^4 p^{12} q^4} = n^8 p^6 q^2 \cdot n^4 p^{12} q^4 = n^{12} p^{18} q^6.
            \]
            The total contribution is \( n^4 \cdot n^{12} p^{18} q^6 = n^{16} p^{18} q^6 \).
            
        \end{boundingsubcases}
    
        Thus, \( B_1 \lesssim n^{16} p^{18} q^6 + n^{14} p^{10} q^6 \).
    
    \item[\( B_2 \)]
        Recall that
        \[
            B_2 = \sum_{e,f,g\in\binom{[n]}{2}} \frac{1}{pq} \EE[ (\D_g \D_e K)^2 (\D_g \D_f K)^2 ].
        \]
        
        \begin{boundingsubcases}
        
        \standarditem[Disjoint case (\( |g \cap e| = 0, |g \cap f| = 0 \)).]
            There are of order \( n^6 \) such triplets. The discrete gradients are bounded by \( p^4 q^4 \). So, the contributions of these terms is
            \[
                \lesssim \frac{n^6 \cdot \sqrt{p^4 q^4 \cdot p^4 q^4}}{pq} = n^6 p^3 q^3.
            \]
            
        \standarditem[Full Overlap (\( |g \cap e| = 1, |g \cap f| = 1 \)).]
            There are of order \( n^4 \) such triplets. The discrete gradients are bounded by \( n^4 p^{12} q^4 \). It follows that the contributions of these terms satisfies
            \[
                 \lesssim \frac{n^4 \cdot \sqrt{n^4 p^{12} q^4 \cdot n^4 p^{12} q^4}}{pq} = \frac{n^4 \cdot n^4 p^{12} q^4}{pq} = n^8 p^{11} q^3.
            \]
            
        \end{boundingsubcases}
        
        Thus, \( B_2 \lesssim n^8 p^{11} q^3 + n^6 p^3 q^3 \).
    
    \item[\( B_3 \)]
        The term \( B_3 \) involves only the fourth moments of the first-order discrete gradients:
        \[
            B_3 = \sum_{e \in \binom{[n]}{2}} \frac{1}{pq} \EE[(\D_e K)^4].
        \]
        There are \(\binom{n}{2} \simeq n^2\) terms in this sum. Using the bound \( \EE[(\D_e K)^4] \lesssim n^8 p^6 q^2 \) derived in \Cref{l:W123_bound_sep}, we calculate the total contribution as
        \[
            B_3 \lesssim \frac{n^2 \cdot n^8 p^6 q^2}{pq} = n^{10} p^5 q.
        \]
    
    \item[\( B_4 \)]
        This term couples first-order and second-order gradients:
        \[
            B_4 = \sum_{e,g\in\binom{[n]}{2}} \frac{1}{pq} \sqrt{\EE[(\D_e K)^4]} \sqrt{\EE[(\D_g \D_e K)^4]}.
        \]
        We distinguish between the cases where the arcs \( e \) and \( g \) are disjoint or share a node.
        
        \begin{boundingsubcases}
        
        \standarditem[Disjoint case (\( |e \cap g| =0 \)).]
            There are of order \( n^4 \) such pairs. The first-order gradient contributes \( \sqrt{n^8 p^6 q^2} = n^4 p^3 q \), while the second-order gradient is bounded by \( \sqrt{p^4 q^4} = p^2 q^2 \). Thus, these terms contribute
            \[
                \lesssim \frac{n^4 \cdot n^4 p^3 q \cdot p^2 q^2}{pq} = n^8 p^4 q^2.
            \]
            
        \standarditem[Overlap case (\( |e \cap g| = 1 \)).]
            There are of order \( n^3 \) such pairs. The first-order gradient remains \( n^4 p^3 q \), but the second-order gradient now contributes \( \sqrt{n^4 p^{12} q^4} = n^2 p^6 q^2 \). The contribution is
            \[
                \lesssim \frac{n^3 \cdot n^4 p^3 q \cdot n^2 p^6 q^2}{pq} = n^9 p^8 q^2.
            \]
            
        \end{boundingsubcases}
        
        Thus, \( B_4 \lesssim n^9 p^8 q^2 + n^8 p^4 q^2 \).
    
    \item[\( B_5 \)]
        Finally, \( B_5 \) involves only the fourth moments of the second-order gradients:
        \[
            B_5 = \sum_{e,g\in\binom{[n]}{2}} \frac{1}{(pq)^2} \EE[(\D_g \D_e K)^4].
        \]
        Again, we split the sum  based on the intersection of \( e \) and \( g \).
    
        \begin{boundingsubcases}
        
        \standarditem[Disjoint case (\( |e \cap g| = 0 \)).]
            There are of order \( n^4 \) terms. Using the bound \( \EE[(\D_g \D_e K)^4] \lesssim p^4 q^4 \), we obtain a contribution of
            \[
                \lesssim \frac{n^4 \cdot p^4 q^4}{p^2 q^2} = n^4 p^2 q^2.
            \]
            
        \standarditem[Overlap case (\( |e \cap g| = 1 \)).]
            There are of order \( n^3 \) terms. Using the larger bound \( \EE[(\D_g \D_e K)^4] \lesssim n^4 p^{12} q^4 \), the contribution is
            \[
                \lesssim \frac{n^3 \cdot n^4 p^{12} q^4}{p^2 q^2} = n^7 p^{10} q^2.
            \]
            
        \end{boundingsubcases}
        
        Thus, \( B_5 \lesssim n^7 p^{10} q^2 + n^4 p^2 q^2 \).
    
    \standarditem[Conclusion.]
        We denote the Kolmogorov distance between the standardized random variable and the standard normal distribution by
        \[
            d_K \coloneqq d_K\left(\frac{K_{n,p}-\EE[K_{n,p}]}{\sqrt{\var(K_{n,p})}}, N\right).
        \]
        According to \Cref{prop:2ndOrderPoincare}, this distance satisfies \( d_K \lesssim \sum_{i=1}^5 \sqrt{B_i} \).
        Substituting the bounds derived above into this inequality, we obtain
        \begin{align*}
            d_K
            &\lesssim \sqrt{B_1} + \sqrt{B_2} + \sqrt{B_3} + \sqrt{B_4} + \sqrt{B_5}
            \\
            &\lesssim \frac{\sqrt{
                n^{10} p^5 q^3 +
                n^6 p^3 q^3 +
                n^8 p^{11} q^3 +
                n^{10} p^5 q +
                n^8 p^4 q^2 +
                n^9 p^8 q^2 +
                n^4 p^2 q^2 +
                n^7 p^{10} q^2
            }}{\var(K_{n,p})}
            \\
            &\simeq \frac{ n^{5} p^{\frac52} q^{\frac12} }{ \var(K_{n,p}) }.
        \end{align*}
        Here, we utilized that the term \( n^{10} p^5 q \) originating from \( B_3 \) dominates all other terms in the numerator under the assumption \( p \gg n^{-2} \).
        Inserting the lower bound for the variance from \Cref{variance_sep}, given by \( \var(K_{n,p}) \simeq n^6 p^3 q (1-\sqrt{2}p)^2 + n^5 p^3 q \), yields
        \begin{align*}
            d_K
            &\lesssim \frac{ n^{5} p^{\frac52} q^{\frac12} }{ n^6 p^3 q \bigl( 1 - \sqrt{2}p \bigr)^2 + n^5 p^3 q}
            \simeq \frac{1}{ n \sqrt{pq} (1-\sqrt{2}p)^2 + \sqrt{pq} }.
        \end{align*}
        Finally, observing that the reciprocal of a sum is comparable to the minimum of the reciprocals, we arrive at
        \[
            d_K \lesssim \min \Bigl\{ \frac{1}{ n \sqrt{pq} (1-\sqrt{2}p)^2 } , \frac{1}{\sqrt{pq}} \Bigr\}.
        \]
        This bound implies that \( d_K \to 0 \) as \( n \to \infty \) provided that \( p,q \gg n^{-2} \) and \( |p-\frac{1}{\sqrt{2}}| \gg n^{-1/2} \), which establishes the central limit theorem.
        \qedhere
    
    \end{boundingcases}

\end{proof}

\begin{rem}
    \Cref{CLT_SymmetricEdge} establishes asymptotic normality for the standardized edge count of $\SEP_{n,p}$ provided we are away from the critical probability \( p = 1/\sqrt{2} \).
    From \Cref{variance_sep}, we know that if \( p \lesssim n^{-2} \) or \( q \lesssim n^{-6} \), the variance does not diverge, precluding a central limit theorem. In this regime, the leading \(n^6\)-term in the variance is suppressed, and \(\var(K_{n,p})\) is only of order \(n^5\). The difficulty is that our present Malliavin--Stein estimates rely on coarse bounds for the first-order discrete gradients, which do not capture the cancellation mechanism responsible for this variance reduction. As a consequence, the resulting error terms are too large to conclude asymptotic normality in the critical window. We nevertheless expect that the central limit theorem remains valid also in this regime. Establishing this would require a more refined analysis of the difference operators, reflecting the additional cancellation at \( p=\frac{1}{\sqrt{2}} \), and would presumably lead to a slower rate of convergence.
\end{rem}

\section[Proof of Theorem 1.1 for triangulations of symmetric edge polytopes]{Proof of \Cref{thm:Main} for Triangulations of Symmetric Edge Polytopes}\label{sec:TriangulationsSEP}

The goal of this section is to provide the proof of \Cref{thm:Main} for the number of edges of a unimodular triangulation $\Delta_{n,p}$ of the random symmetric edge polytope \(\SEP_{n,p}\). We note that this number does not depend on the particular choice of the triangulation, and hence is a well-defined quantity. Accordingly, in the following we use \(K_{n,p}\) to denote this common number of edges of any unimodular triangulation of \(\SEP_{n,p}\).

The proof will closely follow the proof of \Cref{thm:Main} for the number of edges of $\SEP_{n,p}$ and we will be able to use most of our estimates from \Cref{sec:SEP-proof} with small adjustments to account for the additional edges added by the triangulation.

\subsection{Expectation}
We start by analyzing the expectation $\E(K_{n,p})$. 

\begin{thm}\label{lem:ExpectationTriangulation}
    Let \(K_{n,p}\) denote the number of edges in any unimodular triangulation $\Delta_{n,p}$ of \(\SEP_{n,p}\).
    Then,
    \begin{align*}
        \E(K_{n,p})
        &= 6 \binom{n}{4} p^2 (2-p^2)
            + 6 \binom{n}{3} p^2
            + 2 \binom{n}{2} (1-p) \left(1-(1-p^2)^{n-2}\right)
            + 2 \binom{n}{2} p^2
        .
    \end{align*}
    As \(n \to \infty\), this behaves like $\EE[K_{n,p}]\;\simeq\;n^4 p^2$.
\end{thm}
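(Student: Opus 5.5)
We would follow the proof of \Cref{expectation_sep}. The one new ingredient is a choice of total order that makes the ``minimal arc'' condition in \Cref{edges_SEP_triag} easy to handle probabilistically. All unimodular triangulations of $\SEP_{n,p}$ have the same number of edges. So we may take the triangulation from \Cref{edges_SEP_triag} for an arc order $\prec$ that is itself random: assign to each of the $\binom{n}{2}$ potential arcs an independent uniform label $U_a\in[0,1]$, independent of $G_{n,p}$, and order arcs by these labels. Then $K_{n,p}$ is unchanged almost surely, and we can compute $\EE[K_{n,p}]$ by averaging over both the graph and the labels. We write $K_{n,p}$ as the number of edges to the origin plus the number of edges between pairs of non-antipodal directed arcs that satisfy condition (b). Antipodal pairs never form edges, since their segment passes through the origin.

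\emph{Edges to the origin.} There are $2E_{n,p}$ of these, contributing $2\binom{n}{2}p$.

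\emph{Disjoint arcs $e,f$.} There are $3\binom{n}{4}$ choices of $\{e,f\}$ and $4$ orientations, and no $3$-cycle is possible. The unique completing $4$-cycle is either absent (probability $1-p^2$), or present with minimal arc in $\{e,f\}$, which has probability $p^2\cdot\tfrac12$ by symmetry of the labels. This gives $12\binom{n}{4}p^2(1-\tfrac{p^2}{2})=6\binom{n}{4}p^2(2-p^2)$, matching the first term.

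\emph{Arcs sharing a node.} If both arcs point into, or both out of, the common node, no directed cycle contains them, which gives $6\binom{n}{3}p^2$ exactly as before. If the orientations are opposite, we need the closing arc to be absent (probability $1-p$). We also need that each of the $n-3$ potential completing $2$-paths through some $w$ is either absent or has both of its arcs labelled above $m\coloneqq\min(U_e,U_f)$. Conditionally on $m$ these events are independent, so the contribution is
\[
6\binom{n}{3}p^2(1-p)\int_0^1 2(1-m)\bigl(1-p^2+p^2(1-m)^2\bigr)^{n-3}\,dm,
\]
where $2(1-m)$ is the density of $m$.

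\emph{Simplification.} With the substitution $x=(1-m)^2$ the integral becomes elementary:
\[
\int_0^1\bigl(1-p^2+p^2x\bigr)^{n-3}\,dx=\frac{1-(1-p^2)^{n-2}}{(n-2)p^2}.
\]
Using $6\binom{n}{3}/(n-2)=2\binom{n}{2}$, the opposite-orientation term equals $2\binom{n}{2}(1-p)\bigl(1-(1-p^2)^{n-2}\bigr)$. Combining this with the origin term, and rewriting $2\binom{n}{2}p$ together with the other pieces into the displayed form, should reproduce the stated formula. I expect this bookkeeping to be the main obstacle: matching the exact constant terms (notably $2\binom{n}{2}p$ against $2\binom{n}{2}p^2$) requires checking carefully which vertex pairs count as edges, in particular the treatment of antipodal pairs and of the origin.

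\emph{Asymptotics.} The first term is $\simeq n^4p^2$ for all $p\in(0,1)$, since $2-p^2\ge1$. The remaining terms are $O(n^3p^2+n^2)$. One has to compare these with $n^4p^2$ in the very sparse regime $p\lesssim n^{-1}$; note that $(1-p)\bigl(1-(1-p^2)^{n-2}\bigr)\lesssim np^2$ there. From this comparison we conclude $\EE[K_{n,p}]\simeq n^4p^2$, up to that bookkeeping of the constant term.
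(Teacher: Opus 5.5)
Your case analysis is correct and matches the paper's proof term by term:
\begin{itemize}
\item disjoint arcs give $6\binom{n}{4}p^2(2-p^2)$;
\item equally oriented arcs at a shared node give $6\binom{n}{3}p^2$;
\item oppositely oriented arcs at a shared node give $2\binom{n}{2}(1-p)\bigl(1-(1-p^2)^{n-2}\bigr)$.
\end{itemize}
Your remark that antipodal pairs are excluded, because their segment contains the origin, supplies a detail that the stated characterization leaves implicit.

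The only methodological difference is how you handle the minimal-arc condition. You randomize the order with i.i.d.\ uniform labels and evaluate an integral. The paper keeps a fixed order and counts directly. In each directed $4$-cycle, exactly one of its two pairs of disjoint arcs contains the minimal arc. For a fixed closing arc $g$ and orientation with $g$ absent, exactly one of the present completing $2$-paths yields a triangulation edge, namely the one containing the smallest of their arcs. This gives the third term without integration and for every fixed order. Your substitution $x=(1-m)^2$ computes the same quantity, and both routes are valid.

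The genuine problem is your last step: no bookkeeping will turn your result into the displayed formula. Your other three pieces already equal the other three displayed terms. Your edges to the origin contribute $2\binom{n}{2}p$, which differs from the displayed $2\binom{n}{2}p^2$ by $2\binom{n}{2}p(1-p)\neq 0$. Already for $n=2$ one has $\EE[K_{n,p}]=2p$: the single potential arc is present with probability $p$ and gives two edges to the origin. The display instead gives $2p^2$. The paper's own proof also derives $2\binom{n}{2}p$ in the origin case, so the $p^2$ in the statement is a typo. You should assert and prove the corrected identity rather than look for a reconciliation.

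This correction changes the asymptotics. Using $1-(1-p^2)^{n-2}\le (n-2)p^2$ for the third term, one gets $\EE[K_{n,p}]\simeq n^4p^2+n^2p$ for all $p$. This is $\simeq n^4p^2$ precisely when $p\gtrsim n^{-2}$, which is the regime of \Cref{thm:Main}. For $p\ll n^{-2}$ the origin edges dominate, and $\EE[K_{n,p}]\simeq n^2p\gg n^4p^2$. Your coarse bound $O(n^2)$ for the origin term hides exactly this. So the unqualified claim $\EE[K_{n,p}]\simeq n^4p^2$ is false as stated and needs the hypothesis $p\gtrsim n^{-2}$.
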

\begin{proof}
    The proof follows arguments similar to those used in the proof of \Cref{expectation_sep}. However, in the present setting, for a pair of arcs \(e,f\in \binom{[n]}{2}\), we do not need to consider \(4\)-cycles in which \(e\) or \(f\) is the smallest arc with respect to a fixed ordering \(w\).
Let \(\arr{e}\) and \(\arr{f}\) be two oriented arcs, and let \(e\) and \(f\) denote their unoriented counterparts. We distinguish cases according to how \(e\) and \(f\) intersect, and we conclude with the case in which one endpoint of the triangulation edge is the origin.

\begin{maincases}

\item[$e\cap f=\emptyset$]
In this case, there is a unique directed \(4\)-cycle \(C\) whose set of arcs contains \(\arr e\) and \(\arr f\).
The same cycle \(C\) is also the unique \(4\)-cycle containing the remaining two (directed) arcs of \(C\).
Consequently, among all pairs \((\arr e,\arr f)\) with \(e\cap f=\emptyset\), exactly half of them contain the smallest arc of \(C\) with respect to the fixed ordering \(w\) and the number of such pairs equals \(6\binom{n}{4}\).
Moreover, the corresponding vertices form an edge of \(\Delta_{n,p}\) if and only if \(e,f\in G_{n,p}\), which yields the contribution
\[
6\binom{n}{4}\,p^2
\]
to the expectation.

If neither \(\arr e\) nor \(\arr f\) is the smallest arc of \(C\), then the corresponding vertices form an edge of \(\Delta_{n,p}\) with probability \(p^2(1-p^2)\), as in \Cref{expectation_sep}. These terms contribute
\[
6\binom{n}{4}\,p^2(1-p^2)
\]
to $\E(K_{n,p})$. 
\item[$e\cap f=\{v\}$ for some \(v\in V\)]
As for the symmetric edge polytope, we need to analyze which directed \(3\)- or \(4\)-cycles \(\arr e\) and \(\arr f\) may lie on. We distinguish cases according to the orientations at the common node \(v\).

\begin{subcases}

\item
If both arcs point towards \(v\), or both point away from \(v\), then \(\arr e\) and \(\arr f\) cannot lie on a directed cycle. As in \Cref{expectation_sep}, this yields the contribution
\[
6\binom{n}{3}\,p^2.
\]

\item
Assume that \(\arr e\) and \(\arr f\) are oriented in opposite directions at \(v\). Then they lie on a unique directed \(3\)-cycle, which is completed by a unique additional arc \(g=u_1u_2\). We fix such an arc \(g\) and count the pairs that form a directed \(3\)-cycle together with \(g\).

There are \(n-2\) pairs of arcs \(e_i=u_1w_i\) and \(f_i=u_2w_i\), \(i\in[n-2]\), one for each node \(w_i\in V\setminus\{u_1,u_2\}\), that form a directed \(3\)-cycle with \(g\).
Moreover, for \(i\neq j\), the two pairs \((e_i,f_i)\) and \((e_j,f_j)\) together form a \(4\)-cycle. Since at most one of these pairs can contain the smallest arc of that \(4\)-cycle, it follows that \(\Delta_{n,p}\) contains at most one such edge pair per choice of \(g\), and there are two possible orientations. Such an edge pair occurs precisely if \(g\notin G_{n,p}\), which has probability \(1-p\), and at least one of the pairs \((e_i,f_i)\) is present in \(G_{n,p}\), which has probability \(1-(1-p^2)^{n-2}\).
Since there are \(\binom{n}{2}\) choices for \(g\), this case contributes
\[
2\binom{n}{2}\,(1-p)\Bigl(1-(1-p^2)^{n-2}\Bigr).
\]

\end{subcases}

\item[One endpoint corresponds to the origin]
Since \(\Delta_{n,p}\) contains an edge between the origin and every vertex of \(\SEP_{n,p}\), and since \(\SEP_{n,p}\) has two vertices per arc \(e\in G_{n,p}\), this case contributes
\[
2\binom{n}{2}\,p.
\]
\end{maincases}
Summing the contributions from all cases yields the result.    
\end{proof}

\subsection{Variance}

Turning to the variance of $K_{n,p}$, our approach closely mirrors the proof of \Cref{variance_sep}. Given the intricate nature of the triangulations, however, we establish only a lower bound for the variance. This bound is precisely what is required for the subsequent proof of the central limit theorem.

\begin{thm}\label{variance_sepTriag}
    Let $K_{n,p}$ denote the number of edges of a unimodular triangulation $\Delta_{n,p}$ of $\SEP_{n,p}$ and assume $\limsup p < 1$. Then   
    \begin{align*}
        \VV(K_{n,p}) \gtrsim \begin{cases}
            n^6p^3,  &\text{ if } p\gg \frac{1}{n^2}\\
            n^2p, &\text{ if } p\lesssim \frac{1}{n^2}.
        \end{cases}
    \end{align*}
\end{thm}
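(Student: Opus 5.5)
Since only a lower bound is required, I would avoid enumerating covariances and use a conditional variance decomposition instead. Write $K_{n,p}$ as a function of the arc indicators of $G_{n,p}$ and condition on a suitable $\sigma$-algebra $\mathcal F$. By the law of total variance, $\var(K_{n,p}) \ge \var(\EE[K_{n,p}\mid \mathcal F])$, and also $\var(K_{n,p}) \ge \EE[\var(K_{n,p}\mid\mathcal F)]$. It is enough to find one projection whose variance has the required order.

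The simplest candidate is the Hoeffding (first chaos) projection onto the individual arc indicators. For a linear statistic in independent Bernoulli variables one has
\[
\var(K_{n,p}) \ge \sum_{e} \var\bigl(\EE[K_{n,p}\mid X_e]\bigr) = \sum_e p q\,\bigl(\EE[(K_{n,p})^+_e - (K_{n,p})^-_e]\bigr)^2 ,
\]
which follows from the orthogonality of the Efron--Stein/Hoeffding decomposition. By symmetry all arcs $e$ behave alike, so the bound becomes $\binom n2\, pq\,\Delta^2$ with $\Delta \coloneqq \EE[(K_{n,p})^+_e-(K_{n,p})^-_e]$. The goal is then $\Delta \gtrsim n^2 p$ when $p\gg n^{-2}$, since this gives $n^2\cdot p\cdot n^4p^2 = n^6p^3$ (using $q\simeq 1$). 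When $p\lesssim n^{-2}$, the bound $\Delta\gtrsim 1$ suffices, giving $n^2 p$.

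To compute $\Delta$, I would use \Cref{edges_SEP_triag} and split the change into created and destroyed triangulation edges, exactly as in \Cref{Dfbound_SEP}.

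\emph{Created edges.} Adding $e=st$ creates the two origin edges. It also creates edges between $\ee_{s,t}$ or $\ee_{t,s}$ and each vertex $\arr{f}$ with $f\cap e=\emptyset$, whenever no $3$-cycle obstructs and the $4$-cycle condition holds. For disjoint $f$ this is automatic whenever the minimal arc of the induced $4$-cycle is $e$ or $f$, and otherwise it holds with probability at least $1-p^2$. Counting over the $\simeq n^2$ arcs $f$ disjoint from $e$, each present with probability $p$, gives an expected creation of order $n^2p$, uniformly in the order $\prec$.

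\emph{Destroyed edges.} A triangulation edge between two old arcs is lost only when adding $e$ creates a $3$-cycle, or a $4$-cycle whose minimal arc is not among the pair. The expected number of such pairs is at most of order $\EE[W_{n,p}(2)]+\EE[W_{n,p}(3)] \lesssim np^2+n^2p^3$. This is $o(n^2p)$ when $p\to0$. When $p$ is of constant order, this crude bound has the same order $n^2p$ as the creation term, so I need a sharper comparison.

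That comparison is the main obstacle: creations and destructions could in principle cancel for some $p$, which is exactly what produces the factor $(1-2p^2)^2$ for the polytope. I would handle it by pairing terms. A destroyed pair $\{\arr{m_1},\arr{m_2}\}$ from a $4$-cycle through $e$ requires that pair's $4$-cycle to have minimal arc outside it, and it is lost only if the cycle is completed. Its expected contribution should be computed exactly, as in \Cref{lem:exp-case:empty}, using that exactly half the pairs contain the minimal arc. The net change from disjoint pairs then has a leading coefficient of the form $c_1 n^2 p\,(1 - c_2 p^2)$. I expect this coefficient to stay bounded away from zero for $p\le 1-\varepsilon$, because the minimal-arc rule removes half of the obstructions. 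I would check this explicitly and verify that the lost pairs, which number at most $6W_{n,p}(3)$ and carry the factor $p^2$, cannot reach the creation rate.

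If some parameter value did give cancellation, the fallback would be to project instead onto the star of arcs at a node, or onto pairs of arcs. That projection would again give order $n^6p^3$, through a covariance term of the same type as the third configuration of Subcase 1.2, which is nonnegative. In the sparse regime $p\lesssim n^{-2}$, the origin edges alone contribute $2E_{n,p}$ to $K_{n,p}$ and dominate, so $\var \gtrsim \var(E_{n,p})\simeq n^2p$ follows from the same projection with $\Delta\ge 2-o(1)$.
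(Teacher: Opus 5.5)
Your route is genuinely different from the paper's and it does work, but as written the decisive step is only conjectured. The projection bound $\var(K_{n,p})\ge\sum_e\var(\EE[K_{n,p}\mid X_e])=\binom n2 pq\,\Delta^2$ holds for every square-integrable function of the arc indicators, not just linear statistics. Your crude estimate (disjoint-type creation at least $2(n-2)(n-3)p(1-p^2)$, destruction at most $2\EE[W_{n,p}(2)]+6\EE[W_{n,p}(3)]$) settles $p\lesssim n^{-2}$ and $\limsup p<\frac12$. For $p$ accumulating in $[\frac12,1)$ you only \emph{expect} the leading coefficient not to vanish, and that non-cancellation is the whole content of the theorem there; for the polytope it fails at $p=1/\sqrt2$.

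Two parts of your plan would not carry it. First, ``exactly half the pairs contain the minimal arc'' is false for a fixed $\prec$ and fixed $e$: if $e$ is $\prec$-minimal, it is the minimum of every $4$-cycle through it. Second, the fallback cannot rescue a cancellation. For the polytope at $p=1/\sqrt2$ the variance itself is of order $n^5$ by \Cref{variance_sep}, so no projection produces $n^6p^3$. The nonnegative third configuration of Subcase~1.2 is exactly the term that gets cancelled there.

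The gap closes by pairing per $4$-cycle. Write $e=st$. Each directed $4$-cycle $C$ through $e$ (there are $2(n-2)(n-3)$, over both orientations of $e$) contains one new disjoint pair $\{\arr e,\arr f\}$ and one old disjoint pair $\{\arr{m_1},\arr{m_2}\}$. These exhaust the disjoint-type changes, since a disjoint pair's status depends only on its own $4$-cycle. If $\min_\prec C\in\{e,f\}$, the new pair is an edge with probability $p$ and the old one is destroyed with probability $p^3$. If $\min_\prec C\in\{m_1,m_2\}$, the new pair is an edge with probability $p(1-p^2)$ and the old one is never destroyed. Either way the net is $p(1-p^2)$, whatever $\prec$ is. For the polytope the same count gives $p(1-p^2)-p^3=p(1-2p^2)$, which is the source of $(1-2p^2)^2$.

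Among adjacent pairs, the same-orientation ones add $4(n-2)p$. Opposite-orientation edges come two per endpoint pair $\{x_1,x_2\}$ with $x_1x_2\notin G$ and some $2$-path between them (the $\prec$-winner). So adding $e$ removes two for $\{s,t\}$ if $W_{n,p}(2)\ge1$. It adds two for $\{s,v\}$ if $tv\in G$, $sv\notin G$ and no $s$--$v$ $2$-path existed; symmetrically for $\{t,v\}$. With the two origin edges,
\[
\Delta=2(n-2)(n-3)\,p(1-p^2)+4(n-2)p+4(n-2)\,p(1-p)(1-p^2)^{n-3}+2(1-p^2)^{n-2}.
\]
This agrees with $\binom n2^{-1}\frac{d}{dp}\EE[K_{n,p}]$ obtained from \Cref{lem:ExpectationTriangulation} via Russo's formula, reading the origin term there as $2\binom n2p$ as its proof derives. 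Every term is nonnegative, so $\Delta\ge2(n-2)(n-3)p(1-p^2)$ and $\Delta\ge2(1-p^2)^{n-2}$, which give both cases of the theorem at once.

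On what each approach buys: the paper averages over a random term order and enumerates covariances configuration by configuration, disposing of the rest by order-of-magnitude counts and sign arguments. Your projection needs none of this. Moreover, $\binom n2pq\Delta^2\sim 2n^6p^3q(1-p^2)^2$ coincides with the leading term $2(n)_6p^3(1-p)(1-p^2)^2$ that the paper isolates in Subcase~1.2. It also explains the $1/\sqrt2$ phenomenon the paper leaves open: for the polytope, $\Delta\approx n^2\frac{d}{dp}[p^2(1-p^2)]=2n^2p(1-2p^2)$, which vanishes at $p=1/\sqrt2$. What the enumeration gives in return is upper bounds and the lower-order terms, as in \Cref{variance_sep}, which a projection cannot provide.
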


\begin{proof} 
    Following the strategy from the proof of \Cref{variance_sep}, let $\indicator_{\{\arr{e},\arr{f}\}}$ denote the event that the vertices corresponding to $\arr{e}$ and $\arr{f}$ form an edge. We analyze the interactions between two pairs of arcs, $e, f$ and $g, h$, using the same case distinctions as before (illustrated in \Cref{fig:variance}). Finally, we incorporate the cases where one of the vertices is the origin.

    In contrast to the previous proof, we do not need to consider $4$-cycles in which one arc of our pair is the minimal element in a fixed term order. To simplify the counting argument, we introduce a random term order. Let $w \in S_{\binom{n}{2}}$ be a uniformly sampled linear order of the potential arcs of $G$, chosen independently of $G_{n,p}$. Note that $w$ naturally induces a linear order on the arcs of $G_{n,p}$ by restriction. For any linear order $w$, let $K_{n,p}^{(w)}$ denote the number of edges of a triangulation $\Delta$ induced by $w$. Since the total value of $K_{n,p}$ does not depend on the specific term order, and since $w$ is independent of the underlying graph, the law of total variance yields
\begin{align*}
    \V(K_{n,p}) 
    &= \E\bigl[\V(K_{n,p}^{(w)} \mid w)\bigr] = \sum_{\{\arr{e},\arr{f}\}}
        \sum_{\{\arr{g},\arr{h}\}} 
        \E\bigl[
            \mathrm{Cov}\bigl(\indicator_{\{\arr{e},\arr{f}\}}, \, \indicator_{\{\arr{g},\arr{h}\}} \mid w\bigr)
        \bigr].
\end{align*}
This formulation allows us to reframe the problem: instead of explicitly counting configurations where certain arcs are minimal, we can compute the probabilities of these minimal elements occurring within the small subgraphs associated with each case. Moreover, cases yielding independent random variables in the proof of \Cref{variance_sep} remain independent here, since introducing a term order merely eliminates certain configurations. We refer to \Cref{variance_sep} for a detailed breakdown of the case distinction, general strategy, and notation.

    \begin{maincases}
    
    \item[$e\cap f=\emptyset$ and $g\cap h=\emptyset$]
        Notice that in this case there is only one $4$-cycle containing each pair. 
        If one of the arcs is the minimal arc of this $4$-cycle, they are connected by an edge regardless of whether the $4$-cycle is contained in $G_{n,p}$, so $\indicator_{\{\arr{e},\arr{f}\}}$ is just the indicator that both arcs are contained in $G$. 

        \begin{subcases}
            
        \item[\( \vert (e \cup f) \cap (g \cup h) \vert \leq 1 \)]
            In this case, as in the proof of \Cref{variance_sep},
            \(\indicator_{\{\arr{e},\arr{f}\}}\)
            and
            \(\indicator_{\{\arr{g},\arr{h}\}}\)
            are independent random variables. 
        
        \item[\( \vert (e \cup f) \cap (g \cup h) \vert = 2 \)]\label{subcase:SEPTriagvariance:12}
            The assumption that the pairs share two nodes leads to the three configurations  that are depicted in
            \nameCref{fig:variance}~\hyperref[fig:variance]{2b}. 
        
            In the first configuration there are two paths of length two, each consisting of one arc from $\{e,f\}$ and one arc from $\{g,h\}$.
            The variables \(\indicator_{\{\arr{e},\arr{f}\}}\) and \(\indicator_{\{\arr{g},\arr{h}\}}\) are dependent, with covariance $p^7(1-p)$, if and only if the $4$-cycle of both pairs uses the arc $k$ between the vertices $e\cap g$ and $f\cap h$ and neither pair contains the smallest arc of their respective $4$-cycle.
            To count in how many cases neither pair contains the smallest arc of their $4$-cycle we look at the smallest arc among the $7$ arcs contained in the $4$-cycles.
            If one of $e,f,g,h$ is the smallest arc, we know it is the smallest arc of its $4$-cycle, so \(\indicator_{\{\arr{e},\arr{f}\}}\) and \(\indicator_{\{\arr{g},\arr{h}\}}\) are independent.
            If $k$ is the smallest arc, it is in particular the smallest among both $4$-cycles, so the random variables are dependent, this happens in $\frac{1}{7}$ of the cases.
            In the case where one of the two unnamed arcs is the smallest among the $7$ arcs, we do not know yet which arc is the smallest in the other $4$-cycle.
            As there are $4$ arcs in the other one, two of which correspond to the cycle depending on $k$, this leads to additional $\frac{2}{7}\cdot \frac{1}{2}=\frac{1}{7}$ of the cases where the arcs are dependent.
            Concluding this case, we see that the contribution to the covariance is $\frac{2}{7}$ times the contribution we have seen above, yielding
            \[
                \frac{2}{7} \cdot 2 (n)_6 p^7(1-p) = \frac{4}{7}(n)_6 p^7(1-p).
            \]
        
            The second configuration consists of one path of length $3$ and an additional non-incident arc, where the extra arc is in the same pair as the middle arc of the path. 
            Using the labeling of the second configuration in  \nameCref{fig:variance}~\hyperref[fig:variance]{2b}, the dependency here exists if the middle arc $e$ is relevant for \(\indicator_{\{\arr{g},\arr{h}\}}\). 
            This happens if and only if neither $g$ nor $h$ is the smallest arc of its $4$-cycle (and $g$ and $h$ are oriented properly), which is half of the cases as in the proof of \Cref{variance_sep}. 
            This means that we have to consider $2 (n)_6$ cases total, which we will split further based on what the covariance is.
            
            If $e$ or $f$ is the smallest arcs of their cycle we obtain a covariance of
            $$
                p^4(1-p) - p^4(1-p^2) = - p^5(1-p).
            $$
            This can happen is three ways. 
            If $e$ is the smallest occurring arc the condition is met for both pairs, yielding a contribution of $\frac{1}{7}$ of the total cases.  
            If $f$ is the smallest occurring arc, we need to make sure that neither $g$ nor $h$ is the smallest arc of their cycle, yielding $\frac{1}{2}\cdot \frac{1}{7}=\frac{1}{14}$ of the total cases. 
            If the remaining arc of the $4$-cycle of $g,h$ is the smallest, we need to make sure that either $e$ or $f$ is the smallest arc of their respective $4$-cycle, which happens in $\frac{1}{7}\cdot\frac{1}{2}=\frac{1}{14}$ of the total cases.
            Altogether this yields $\frac{4}{14}$ of the total cases, which is $\frac{4}{7}$ of the cases where $g,h$ are not the smallest arcs of their respective $4$-cycles.
            
            If neither $e$ nor $f$ is the smallest arc of their cycle we obtain the covariance from \Cref{variance_sep}, which is
            $$
                -p^5(1-p)(1-p^2),
            $$
            which happens in the remaining $\frac{3}{7}$ of cases.
            
            This yields an overall contribution of 
            \begin{align*}
               &{} -2(n)_6 p^5(1-p) \left(\frac{4}{7}+\frac{3}{7}(1-p^2)\right) = -(n)_6p^5(1-p) (2-\frac{6}{7}p^2).
            \end{align*}
            
            Finally if the two pairs share one arc, as depicted in the third configuration of \Cref{fig:variance}, the pairs are always dependent. We split the $2(n)_6$ cases further based on how many pairs contain the smallest arc of their $4$-cycle. 
            Let $J$ be a set of four appearing arcs besides, $e=g,f,h$.
        
            If $J$ contains the smallest arc of both cycles we obtain a covariance of 
            $$
                p^3(1-p)(1-p^2)^2
            $$
            as in the proof of \Cref{variance_sep}.
            This happens if one arc of $J$ is the smallest among all $7$ arcs and additionally one of the two arcs in $J$ is the smallest of the other cycle. 
            This yields $\frac{4}{7}\cdot \frac{1}{2}= \frac{2}{7}$ of the cases.
        
            If one of the pairs contains the smallest arc of their $4$-cycle we need to exclude one $4$-cycle, so we obtain a covariance of 
            $$
                p^3(1-p)(1-p^2).
            $$
            One way for this to happen is the other half of the case counted above, i.e., one arc of $J$ is the smallest and one arc of $e,f,g,h$ is the smallest in the other cycle, which yields $\frac{2}{7}$ of the cases.
            Another case occurs if one arc of $f,h$ is the smallest and one arc of $J$ is the smallest among the other cycle, which happens in $\frac{2}{7}\cdot \frac{1}{2}=\frac{1}{7}$ of the cases, which leads to $\frac{3}{7}$ of the cases. 
        
            If $J$ does not contain the smallest arc of any cycle, we do not need to exclude any other arcs, which yields a covariance of 
            $$
            p^3-p^4 = p^3(1-p) .
            $$
            This case happens if $e=g$ is the smallest arc, or if the smallest arc is among $f,h$ and the smallest arc in the remaining cycle is not in $J$. Altogether this yield $\frac{1}{7}+\frac{2}{7}\cdot \frac{1}{2}=\frac{2}{7}$ of the cases.
            
            Collecting the terms from this configuration yields 
            $$
               2(n)_6 \biggl(\frac{2}{7}p^3(1-p)(1-p^2)^2+ \frac{3}{7}p^3(1-p)(1-p^2)+ \frac{2}{7}p^3(1-p)\biggr)
               = 2(n)_6(1-p)p^3\biggl(1- p^2+ \frac{2}{7}p^4\biggr),
            $$
            and combining all contribution from \Cref{subcase:SEPTriagvariance:12}, yields
            \begin{align*}
                & \frac{4}{7} (n)_6 p^7(1-p)
                    - (n)_6 p^5 (1-p) \biggl( 2-\frac{6}{7}p^2 \biggr)
                    + 2 (n)_6 p^3 (1-p) \biggl( 1-p^2 + \frac{2}{7}p^4 \biggr)
                \\
                ={}& (n)_6 p^3 \Biggl(
                        \frac{4}{7} p^4 (1-p)
                        - p^2 (1-p) \biggl( 2-\frac{6}{7} p^2 \biggr)
                        + 2 (1-p) \biggl( 1-p^2 + \frac{2}{7} p^4 \biggr)
                    \Biggr)
                \\
                ={}& 2 (n)_6 p^3 (1-p) \bigl( 1 - p^2 \bigr)^2
                ,
            \end{align*}
            which is going to be our target rate in the regime $p\gg \frac{1}{n^2}$.
            
        \item[\(\vert (e \cup f) \cap (g \cup h) \vert = 3\)]
            All configurations here use exactly $5$ nodes and therefore at least $3$ arcs. Therefore these cases can contribute at most on the order of $n^5p^3$, which is dominated by our target rate $n^6p^3$ in all regimes.
        
        \item[\(\vert (e \cup f) \cap (g \cup h) \vert = 4\)]
            All configurations in this case use four nodes and at most $2$ arcs. Therefore they can contribute at most on the order of $n^4p^2$, which is dominated by $n^6p^3$ if $p\gg \frac{1}{n^2}$ and does not exceed $n^2p$ in the regime $p\lesssim \frac{1}{n^2}$.
        
        \end{subcases}
        
    \item[$\vert e \cap f \vert = 1$ or $\vert g \cap h \vert = 1$]
    
        \begin{subcases}
    
        \item[$\vert e \cup f \cup g \cup h \vert = 7 $]
            As argued in the proof of \Cref{variance_sep}, the indicator variables in this case are independent. 
            
        \item[$\vert e \cup f \cup g \cup h \vert = 6 $]
            As the first configuration of \nameCref{fig:variance}~\hyperref[fig:variance]{2f} yields an independent case, there are two configurations we need to consider.
            
            The first consist of one path of length $3$, where two consecutive arcs are part of the same pair, and an isolated arc, as shown in \nameCref{fig:variance}~\hyperref[fig:variance]{2f}. 
            We will show that this case actually contributes only on the order of $n^5p^3$ and thus is dominated by $n^6p^3$ in all regimes.
            The indicator variables here are dependent because, using the notation from the figure, one $4$-cycle $C$ that contains $e$ and $f$ also uses $h$. 
            Note that the unique $4$-cycle containing $g$ and $h$ does not share any arcs with $e,f$ or their $3$- or $4$-cycles.
            If $e$ or $f$ is the smallest arc of $C$, their indicator does not depend on arc $h$, so we get independence. 
            On the other hand, if neither $e$ nor $f$ is the smallest arc of $C$, one of the $4$-cycles relevant for $ \indicator_{\{\arr{e},\arr{f}\}} $ is missing only one arc.
            If there are $\ell$  $4$-cycles where the smallest element is neither $e$ nor $f$ for some $\ell\in [n-4]$, as we already know $h$ is contained in a smaller pair, this induces a covariance of
            $$
                p^4\left( (1-p)^2(1-p^2)^{\ell-2} - (1-p)(1-p^2)^{\ell-1}\right) = -p^5(1-p)^2(1-p^2)^{\ell-2}
            $$
            if $g$ or $h$ is the smallest arc of their $4$-cycle and a covariance of 
            \begin{align*}
                &
                    p^4 \left( (1-p^2)(1-p) (1-p) (1-p^2)^{\ell-2}
                    - (1-p^2)(1-p)(1-p^2)^{\ell-1} \right)
                =
                    -p^5(1-p)^2(1-p^2)^{\ell-1}
            \end{align*}
            if neither $g$ or $h$ is the smallest arc of their $4$-cycle. Note that as the latter term is smaller in absolute value, we can bound the covariance in both cases from below by
            $$
                -p^5(1-p)^2(1-p^2)^{\ell-2}.
            $$
            To count the sum over these configurations, note that $e,f$ have a unique arc $k$ they form a $3$-cycle with.
            For each such arc $k\in \binom{[n]}{2}$ there exists exactly one pair $e_\ell,f_\ell$ that is the $\ell$\textsuperscript{th} smallest pair of its $4$-cycles for each $\ell\in\{1,\dots,n-2\}$.
            Fixing a pair $e,f$ as the $\ell$\textsuperscript{th} smallest pair, one has $2(\ell-1)$ ways to choose arc $h$, as it has to belong to a smaller pair and each pair consists of $2$ arcs. 
            One has $\binom{n-4}{2}$ possibilities to choose arc $g$. 
            Finally there are $2$ ways to direct $e,f$ and $4$ ways to direct $g,h$. 
            This yields a total of
            \begin{align*}
                &
                    \binom{n}{2} \binom{n-4}{2} 4
                    \cdot 2
                    \cdot (-p^5)(1-p)^2
                    \sum_{\ell=2}^{n-2}\bigl( 1-p^2 \bigr)^{\ell-2}2(\ell-1)
                .
            \end{align*}
            Note that
            \begin{align*}
                &
                    \sum_{\ell=2}^{n-2}(1-p^2)^{\ell-2}(\ell-1)
                =
                    \sum_{\ell=0}^{n-4} \sum_{k=0}^{\ell} (1-p^2)^{\ell}
                =
                    \sum_{k=0}^{n-4} \sum_{\ell=k}^{n-4} (1-p^2)^{\ell}
                \\
                ={}&
                   \sum_{k=0}^{n-4} (1-p^2)^k \frac{1-\bigl( 1-p^2 \bigr)^{n-4+1-k}}{1-\bigl( 1-p^2 \bigr)}
                \leq
                    \sum_{k=0}^{n-4} 1 \cdot \frac{1}{1-\bigl( 1-p^2 \bigr)}
                =
                    \frac{n-3}{p^2}
            \end{align*}
            so that the total contribution from this case
            is at most of order
            \begin{align*}
                    \binom{n}{2} \binom{n-4}{2} 4
                    \cdot 2
                    \cdot (-p)^5 (1-p)^2
                    \cdot \frac{n-3}{p^2}
                \simeq{}&
                    n^5 p^3 (1-p)^2
                ,
            \end{align*}
            which is smaller than $n^6p^3$ in all regimes.
        
            The other case consists of two disjoint pairs of $2$-paths, where each path corresponds to one pair. 
            Our goal here will be to show that this case has a positive contribution and as such does not cancel out our positive variance of $n^6p^3$.
            In this case both pairs have $4$-cycles that use the arcs connecting the leafs of one path with the leafs of the other one. The dependence here arises if both pairs use the same of these connecting arcs.
            This also means that if one pair is directed such that it always forms an edge or one of $e,f,g,h$ is smaller than the connecting arcs, the random variables are independent.
            So that both $e,f$ and $g,h$ do not contain the smallest arc of their $4$-cycles and they are directed such they can form a $4$-cycle.
            In this case both pairs form an arc if of the $4$ connecting arcs, either none or one are part of $G_{n,p}$ or if the two connecting arcs part of $G_{n,p}$ are not incident.
            This yields a covariance of
            \begin{align*}
                &
                    p^4(1-p)^2
                    \Bigl(
                        (1-p^2)^{\ell_1-3}(1-p^2)^{\ell_2-3}
                        \bigl(
                            (1-p)^4
                            + 4p(1-p)^3
                            + 2p^2(1-p)^2
                        \bigr)
                        - (1-p^2)^{\ell_1+\ell_2-2}
                    \Bigr)
                \\
                ={}& p^4 (1-p^2)^{l_1-l_2-6} (1-p)^4
                    \bigl(
                        8 + 3p(1-p) + p^3 + p^4
                    \bigr) >0.
            \end{align*} 
            
        \item[$\vert e \cup f \cup g \cup h \vert = 5 $]
            In this case each configuration uses $5$ nodes and therefore at least $3$ arcs, which leads to a maximal contribution of $n^5p^3$, which is dominated by our target rate $n^6p^3$ in all regimes.
            
        \item[$\vert e \cup f \cup g \cup h \vert = 4 $]
            Each configuration here uses $4$ nodes and at least $3$ arcs, as shown \nameCref{fig:variance}~\hyperref[fig:variance]{2h} and can therefore contribute at most on the order of $n^4p^3$, which is dominated by $n^6p^3$ in all regimes.
            
        \item[$\vert e \cup f \cup g \cup h \vert = 3 $]
            Each configuration here uses $3$ nodes and at least $2$ arcs. Therefore the contribution is at most on the order of $n^3p^2$, which is dominated by $n^6p^3$ if $n\gg \frac{1}{n^2}$ and by $n^2p$ if $n\lesssim \frac{1}{n^2}$.
    
        \end{subcases}
        
    \item[One pair corresponds to the origin]
        If one vertex corresponds to the origin the indicator of its pair becomes the indicator of whether the other arc is there. To keep the above notation we abuse notation and treat the origin as an empty arc, i.e., $i=\emptyset$ if vertex $i$ corresponds to the origin.
    
        \begin{subcases}
    
        \item[$|e\cup f \cup g\cup h |= 6 $]
            This case consist of three non-overlapping arcs, one of which, say $e$, is contained in a pair with the origin. As the unique $4$-cycle of the other arcs, does not contain $e$, the indicator variables in this case are independent.
            
        \item[$|e\cup f \cup g\cup h| = 5 $]
            In this case at least $3$ arcs need to be present, which leads to a contribution on the order of $n^5p^3$, which is dominated by our target rate $n^6p^3$ in all regimes.
        
        \item[$|e\cup f \cup g\cup h| \in {3,4} $]
            In this case at least $2$ arcs need to be present, which leads to an overall contribution of at most $n^4p^2$, is dominated by $n^6p^3$ if $p\gg \frac{1}{n^2}$ and does not exceed $n^2p$ if $p\lesssim \frac{1}{n^2}$.
            
        \item[$|e\cup f\cup  g\cup h| = 2 $]
            In this case each pair contains the same arc $e$, possibly with different orientations, and the origin.
            Each pair corresponds to an edge if and only if the other one does.
            So in this case we obtain a contribution of 
            $$
                4\binom{n}{2} (p-p^2) = 4\binom{n}{2}p(1-p),
            $$
            which is dominant in the case $p\lesssim \frac{1}{n^2}$.
        
        \end{subcases}
    \end{maincases}     
    \textbf{Conclusion.}
        As argued in each case separately, the term $n^6p^3$ from \Cref{subcase:SEPTriagvariance:12} is dominant in the regime $p\gg \frac{1}{n^2}$, while in the case $p\lesssim \frac{1}{n^2}$ no term exceeds $n^2p$.
        \qedhere

\end{proof}

\begin{rem}
\begin{itemize}
    \item[(a)] Comparing the variance of the number of edges of the triangulations to that of the symmetric edge polytope derived in \Cref{variance_sep}, we observe that for $p \geq \frac{1}{n^2}$, the asymptotic rate remains unchanged, except that the factor $(1-\sqrt{2}p)^2$ disappeared. For $p \lesssim \frac{1}{n^2}$, we obtain a new lower bound. In this regime of small $p$, the guaranteed edges connected to the origin dominate the interactions between distinct arcs.

    \item[(b)] \Cref{variance_sepTriag} serves as a weaker analogue to \Cref{variance_sep} because it is restricted to the regime $\limsup p < 1$. Consequently, this bound cannot be used to prove a central limit theorem when $p$ converges to $1$. However, we conjecture that the result holds with asymptotic equality and that bounds on $(1-p)$ similar to those for the symmetric edge polytope could be established.
\end{itemize}
\end{rem}

\subsection{Discrete gradients}

Having established a variance lower bound, we now turn to the discrete gradients. As with the variance, the proof strategy for $\SEP_{n,p}$ carries over directly.

\begin{lemma}
    Let $e \in \binom{[n]}{2}$. Then
    \begin{align*}
        \EE\bigl[(\D_{e} K_{n,p})^4\bigr]
        &\lesssim  \begin{cases}
            n^8 p^6q^2, & \text{if } p \gg n^{-2}, \\
            n^2 p^3q^2, & \text{if } p \lesssim n^{-2}.
        \end{cases}
    \end{align*}
\end{lemma}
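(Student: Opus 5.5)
The plan is to imitate \Cref{Dfbound_SEP} and \Cref{l:W123_bound_sep}. First I derive an almost sure bound on $\bigl|(K_{n,p})^+_e-(K_{n,p})^-_e\bigr|$ in terms of $E_{n,p}$, $W_{n,p}(2)$ and $W_{n,p}(3)$. Then I take fourth moments using the moment estimates already proved there. Fix a total order $\prec$ on the arcs and use the triangulation of \Cref{edges_SEP_triag}. Since $K_{n,p}$ does not depend on the choice of unimodular triangulation, I may compare $\Delta$ for $G_{n,p}-e$ and $G_{n,p}+e$ using the same order restricted to the arcs present.

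\emph{Created edges.} Passing from $G_{n,p}-e$ to $G_{n,p}+e$ adds two new vertices $\ee_{s,t},\ee_{t,s}$. Exactly two new edges join them to the origin. Every other new edge is incident to one of these vertices and to one of the $2(E_{n,p})^-_e$ old vertices. Hence at most $2+4E_{n,p}$ edges are created.

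\emph{Destroyed edges.} An old edge $\{\mathbf x,\mathbf y\}$ not involving the origin can only disappear in two ways. Either a new directed $3$-cycle through $s\to t$ or $t\to s$ contains both arcs. Or a new directed $4$-cycle contains both arcs, and its $\prec$-minimal arc is not one of them; this minimal arc may well be $e$ itself. Adding $e$ does not change which old $4$-cycles exist, nor the minimal arc of any old $4$-cycle. So every destroyed edge is charged to a new cycle, formed by $e$ together with an $s$--$t$ path of length $2$ or $3$. Exactly as in \Cref{Dfbound_SEP}, this gives at most $2W_{n,p}(2)+6W_{n,p}(3)$ destroyed edges. Altogether, almost surely,
\[
  |\D_e K_{n,p}|\le \sqrt{pq}\,\bigl(2+4E_{n,p}+2W_{n,p}(2)+6W_{n,p}(3)\bigr).
\]

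\emph{Fourth moments.} By $(a_1+\dots+a_4)^4\le 4^3\sum a_i^4$ and \Cref{l:W123_bound_sep}, the terms coming from $E_{n,p}$, $W_{n,p}(2)$ and $W_{n,p}(3)$ contribute at most $p^2q^2\bigl(n^8p^4+n^2p\bigr)$. For $p\gg n^{-2}$ this is of order $n^8p^6q^2$. For $p\lesssim n^{-2}$ it is of order $n^2p^3q^2$. The deterministic origin term contributes $16\,p^2q^2$. For $p\gg n^{-2}$ this is clearly absorbed by $n^8p^6q^2$.

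\emph{Main obstacle.} The step I expect to be hardest is the regime $p\lesssim n^{-2}$, because of the origin term. The two origin edges are created whenever $e$ is added, so $16p^2q^2$ is an honest contribution. It satisfies $p^2q^2\lesssim n^2p^3q^2$ exactly when $p\gtrsim n^{-2}$, that is, at the boundary $p\simeq n^{-2}$ of this regime. There I would absorb it by writing $1\lesssim n^2p$. For smaller $p$ I would try to refine the counting first. One idea is to pair the two origin edges with the two vertices they attach, and bound them by $2\,\indicator\{(E_{n,p})^-_e\ge 1\}$ plus an exactly computable term. However, the case where $G_{n,p}-e$ is empty still leaves $K^+-K^-=2$. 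So I expect the origin contribution to be the sharp obstruction. Any extension of the bound $n^2p^3q^2$ below $p\simeq n^{-2}$ must be checked against it before the lemma is fed into \Cref{prop:2ndOrderPoincare}.
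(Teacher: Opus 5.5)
Your argument is the same as the paper's. The paper also bounds $\lvert (K_{n,p})^+_e-(K_{n,p})^-_e\rvert$ almost surely by $4E_{n,p}+2+2W_{n,p}(2)+6W_{n,p}(3)$, where the extra $2$ counts the two origin edges, and then invokes \Cref{l:W123_bound_sep}. For $p\gg n^{-2}$, and at $p\simeq n^{-2}$, your computation is complete and correct. The regime $p\gg n^{-2}$ is the only one used later, in \Cref{thm:CLTtriangulations}.

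The obstruction you flag for $p\ll n^{-2}$ is real, and no refined counting removes it. The second case of the statement is false in that range; the paper's one-line proof drops the constant when it applies the moment lemma. You can make your suspicion rigorous as follows. On the event that $G_{n,p}-e$ has no arcs, $(K_{n,p})^-_e=0$ and $(K_{n,p})^+_e=2$, the latter counting only the two origin edges. Hence
\[
\EE\bigl[(\D_e K_{n,p})^4\bigr]\;\ge\;16\,p^2q^2\,(1-p)^{\binom n2-1}\;\gtrsim\;p^2q^2
\qquad\text{whenever } p\lesssim n^{-2},
\]
since $\binom n2 p$ stays bounded. On the other hand, $n^2p^3q^2/(p^2q^2)=n^2p\to0$ when $p\ll n^{-2}$.

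Your own upper bound gives $\lesssim p^2q^2\bigl(1+n^2p+n^8p^4\bigr)$. Together these show the correct sparse-regime statement is $\EE[(\D_eK_{n,p})^4]\simeq p^2q^2$ for $p\lesssim n^{-2}$. This agrees with the claimed $n^2p^3q^2$ only when $p\simeq n^{-2}$. So do not try to push the stated bound below $p\simeq n^{-2}$; replace it by $p^2q^2$ there. The error is harmless for the paper's results, because the CLT assumes $p\gg n^{-2}$.
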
 
\begin{proof}  
    The proof of \Cref{Dfbound_SEP} goes through almost verbatim. For every arc in $G$, at most $4$ edges are added; here, there are an additional two edges connecting to the origin. We remove at most $2W_2$ edges due to $3$-cycles, and for every $4$-cycle, we might remove up to $6W_3$ edges. This yields the intermediate bound
    $
        \D_e \le 4 E_{n,p} + 2 + 2 W_2 + 6 W_3.    
    $
    The result then follows by applying \Cref{l:W123_bound_sep}. 
\end{proof}

For the second-order discrete gradient, we again rely on the previous proof but slightly refine the reasoning.

\begin{lemma}
     Let $K_{n,p}$ be as before. Let $e,f\in \binom{[n]}{2}$ with $e\neq f$. Then,
    for $e\cap f = \emptyset$ it holds
    \begin{align*}
        \E\left[(\D_e\D_fK_{n,p})^4\right]
        &\lesssim p^4q^4,
    \intertext{and for $\vert e\cap f\vert= 1$ we have the bounds}
        \E\left[(\D_e\D_fK_{n,p})^4\right]
        &\lesssim
        p^4q^4 + n^4p^{12}q^4
        \simeq
        \begin{cases}
            n^4p^{12}q^4, &\text{if } p \gg n^{-\frac12},\\
            p^4q^4, &\text{if } p \lesssim n^{-\frac12}.
        \end{cases}
    \end{align*}
\end{lemma}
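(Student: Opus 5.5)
We follow the proof of \Cref{DDbounds_SEP} almost verbatim. Fix $\Bar{G}=G_{n,p}-e-f$ and use the same random variables $S_i$, $S_{e,f}$, $R_i^{(j)}$, $R_{e,f}$, $T_i^{(j)}$, $T_{e,f}$, now counting edges of the triangulation $\Delta$ induced by a fixed total order $\prec$ on the arcs, as described in \Cref{edges_SEP_triag}. The inclusion--exclusion identity is purely set-theoretic, so it holds unchanged and gives
\[
\D_e\D_f K = pq\bigl(S_{e,f}-R_{e,f}-T_e^{(f)}-T_f^{(e)}-T_{e,f}\bigr).
\]
The edges to the origin cancel in the second difference. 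The origin edges at $\ee_{s,t},\ee_{t,s}$ appear exactly when the arc $st$ is present, so they enter $K^{+,+}-K^{+,-}-K^{-,+}+K^{-,-}$ with total coefficient zero. We may therefore restrict attention to edges between two non-origin vertices.

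Next we bound each term, splitting into the cases $e\cap f=\emptyset$ and $|e\cap f|=1$ as before. The key observation is that the triangulation condition is weaker than the polytope condition. A pair of directed arcs is an edge of $\Delta$ if and only if it lies on no directed $3$-cycle and every directed $4$-cycle through it has its $\prec$-minimal arc in the pair. So a pair can only change status, on adding $e$ or $f$, when a $3$- or $4$-cycle through $e$ or $f$ appears whose minimal arc is not in the pair.

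\textbf{The trivially bounded terms.} $S_{e,f}\le 4$ holds as before. In the disjoint case, the bounds $T_i^{(j)}\le 8$ and $T_{e,f}\le 4$ also carry over, since the candidate arcs are again confined to the at most two $4$-cycles through both $e$ and $f$.

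\textbf{The uniqueness argument.} This is what yields $R_{e,f}\le 8$ in the disjoint case and $T_{e,f}\le 2$ in the intersecting case. In the paper it used that two pairs forming a $2$-path with the same endpoints span a $4$-cycle and hence are not edges. In the triangulation setting such a $4$-cycle does not automatically kill both pairs: exactly one of the two complementary pairs containing its minimal arc survives. I would first show that the relevant surviving pairs for fixed endpoints $x_1,x_2$ are totally ordered by the $\prec$-minimum of the arcs they contain. From this, at most one of them can lie on a new cycle through $e$ (resp.\ $f$) whose minimum is outside the pair. That would restore a constant bound per endpoint pair, possibly with a larger absolute constant. I expect this to be the main obstacle.

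\textbf{The fallback.} If uniqueness fails, I would bound $R_{e,f}$ by the total number of arc pairs that close a $4$-cycle with both $e$ and $f$.
\begin{itemize}
\item In the disjoint case there are only $O(1)$ such positions, but the $2$-path through an arbitrary middle node $w$ gives a count of order $W_{n,p}(2)$-type variables, namely paths between an endpoint of $e$ and an endpoint of $f$.
\item In the intersecting case the count is again a linear combination of $u_e$--$u_f$ paths of length $2$, as in \Cref{fig:D_eD_f4}, plus $O(1)$.
\end{itemize}
This gives $|\D_e\D_fK|\le pq\,(c_1W + c_2)$, where $W$ is a sum of boundedly many path counts, each distributed as $\mathrm{Bin}(n-2,p^2)$.

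In the intersecting case this is exactly the form obtained before, and \Cref{l:W123_bound_sep} yields $\EE[(\D_e\D_fK)^4]\lesssim p^4q^4(1+n^4p^8)$. In the disjoint case the fallback would only give the same weaker bound. So to obtain the stated $p^4q^4$ I need the constant bound. I would get it by observing that for $e\cap f=\emptyset$ every cycle through both $e$ and $f$ is one of two specific $4$-cycles. Any destroyed pair must therefore contain one of the four connecting arcs together with a fixed arc of the $2$-path structure, which leaves $O(1)$ candidates regardless of $\prec$. Combining the cases with $(a+b)^4\lesssim a^4+b^4$ completes the proof.
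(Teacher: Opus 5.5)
\textbf{Gap: the uniqueness step is left unproved, and your fallback does not replace it.} Your setup is fine: the same decomposition of $\D_e\D_fK$, the cancellation of the origin edges in the second difference, and the $O(1)$ bounds for $S_{e,f}$ and, in the disjoint case, for $T_i^{(j)}$ and $T_{e,f}$. But you flag the uniqueness step as the main obstacle and never carry it out.

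The missing idea is a one-liner that you nearly state yourself. Take two pairs that are the complementary halves of a common $4$-cycle, such as $x_1\to w\to x_2$ and $x_1\to w'\to x_2$. Their arc sets are disjoint, so the $\prec$-minimal arc of that cycle lies in exactly one of them. By \Cref{edges_SEP_triag}, the other pair is then not an edge of $\Delta$. Hence in any family of pairs that pairwise span $4$-cycles, at most one (for a fixed orientation) is an edge, namely the one containing the overall minimal arc. There is no ``totally ordered family of surviving pairs'' to analyse: there is at most one survivor. This is exactly how the paper carries over, unchanged from \Cref{DDbounds_SEP}, the bound $R_{e,f}\le 8$ in the disjoint case and the bounds $R_{e,f}\le 2W_{n,p}(2)+4$ and $T_{e,f}\le 2$ in the intersecting case.

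\textbf{Why the fallback fails in both cases.} For $e\cap f=\emptyset$, the pairs counted by $R_{e,f}$ are destroyed by one cycle through $e$ alone and a different cycle through $f$ alone. They are $2$-paths $x_1 - w - x_2$ with $x_1\in e$, $x_2\in f$ and an arbitrary middle node $w$. They contain none of the four connecting arcs and lie on no cycle through both $e$ and $f$. Your ``two specific $4$-cycles'' argument is the one for $T_{e,f}$, not for $R_{e,f}$. Without uniqueness there can be order $np^2$ such candidates, as you observed a few lines earlier, so ``$O(1)$ regardless of $\prec$'' is false.

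For $|e\cap f|=1$, reducing $R_{e,f}$ to $u_e$--$u_f$ $2$-path counts also presupposes uniqueness, since it needs at most one $w$ per $x_2$. Without it, for each $x_2$ adjacent to both $u_e$ and $u_f$ you must count all $v$--$x_2$ $2$-paths. That gives the variable $\sum_{x_2}\indicator\{u_ex_2,u_fx_2\in\Bar{G}\}\cdot\#\{w : vw,\,wx_2\in\Bar{G}\}$, whose mean is of order $n^2p^4$. Its fourth moment is at least of order $n^8p^{16}$, which exceeds $n^4p^8$ whenever $p\gg n^{-1/2}$. So without the uniqueness observation you obtain neither of the two stated bounds.
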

\begin{proof}
    This proof utilizes the same arguments as \Cref{DDbounds_SEP} and adopts the variable names depicted in \Cref{fig:DDvarNames}.
    
    We consider the same cases, i.e., arc configurations within $G_{n,p}$, and apply analogous arguments to achieve the same result. We assume $e$ and $f$ are the smallest arcs, meaning they eliminate all edges for pairs with which they form a $4$-cycle, just as with the polytope.  
    
    In the proof of \Cref{DDbounds_SEP}, we used two primary arguments to bound each case. The first approach trivially bounded the case almost surely by counting the possible configurations of the arcs. This logic, which was used to bound \hyperref[bound:Sef]{$S_{e,f}$}, \hyperref[bound:T-f-e]{$T_{e}^{(f)}, T_{f}^{(e)}$}, and the first case of \hyperref[bound:Tef]{$T_{e,f}$}, carries over verbatim.
    
    The second approach demonstrated that any two pairs possessing a certain property formed a $4$-cycle, implying that only one such pair could form an edge. This argument was used to bound \hyperref[bound:Ref]{$R_{e,f}$} and the second case of \hyperref[bound:Tef]{$T_{e,f}$}. In the polytope proof, the existence of such a $4$-cycle immediately contradicted either pair forming an edge. For triangulations, while this immediate contradiction no longer holds, the ultimate conclusion remains valid. 
    
    Specifically, if every two pairs in a set $S$ of arc pairs form a $4$-cycle with each other, only one pair in $S$, say $e_1,e_2$, can be the smallest. 
    Since every other pair in $S$ forms a $4$-cycle with $e_1,e_2$, only one pair in $S$ can actually form an edge. Applying this reasoning to our cases yields the same bounds as those for the symmetric edge polytope.

    The only remaining case involves an edge between the origin and a vertex corresponding to an arc $g$. 
    Because all edges connected to the origin are independent of all other arcs except $g$ itself, this contributes $0$ to $\D_e \D_f K_{n,p}$.

    Thus, the arguments above yield the same overall bounds as in \Cref{DDbounds_SEP}.    
\end{proof}

\subsection{Central limit theorem}

Having computed the same bounds for $K_{n,p}$ as for the symmetric edge polytope in the regime $\limsup p < 1$, we can now state a similar central limit theorem.

\begin{thm}\label{thm:CLTtriangulations}
    Let $G \sim G_{n,p}$, let $K_{n,p}$ be the number of edges of any unimodular triangulation of $\SEP_G$, and let $N \sim N(0,1)$. If $p \gg \frac{1}{n^2}$ and $\limsup_{n \to \infty} p < 1$, then we have
    $$
        d_K\left(\frac{K_{n,p}-\E[K_{n,p}]}{\sqrt{\V(K_{n,p})}}, N\right)
        \lesssim 
        \frac{1}{n\sqrt{p}}.
    $$
    In particular, $K_{n,p}$ satisfies a central limit theorem as $n \to \infty$.
\end{thm}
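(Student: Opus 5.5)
The plan is to repeat the proof of \Cref{CLT_SymmetricEdge} almost verbatim. We apply \Cref{prop:2ndOrderPoincare} to the standardized variable $F=(K_{n,p}-\E[K_{n,p}])/\sqrt{\V(K_{n,p})}$. The discrete gradients scale as $\D_e F=\D_e K_{n,p}/\sqrt{\V(K_{n,p})}$ and $\D_e\D_f F=\D_e\D_f K_{n,p}/\sqrt{\V(K_{n,p})}$. Hence every $B_i$ equals the corresponding unnormalized quantity divided by $\V(K_{n,p})^2$, and therefore $d_K\lesssim \V(K_{n,p})^{-1}\sum_i \sqrt{B_i(K_{n,p})}$, exactly as in the conclusion of that proof.

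For the moment inputs we use the two lemmas just proved for triangulations. The first is $\E[(\D_eK_{n,p})^4]\lesssim n^8p^6q^2$ for $p\gg n^{-2}$, which is the same bound as in \Cref{l:W123_bound_sep}; the extra additive constant $2$ coming from the origin is absorbed because $E_{n,p}\gtrsim 1$ in fourth moment. The second is the bound on $\E[(\D_e\D_fK_{n,p})^4]$, which is identical to \Cref{DDbounds_SEP}. Since these inputs coincide with those for $\SEP_{n,p}$, the case-by-case estimates of $B_1,\dots,B_5$ carry over unchanged: we split according to whether the arcs $e,f,g$ are disjoint or share nodes, and apply Cauchy--Schwarz to the mixed moments. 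This yields the same eight-term numerator, which is dominated by the $B_3$ term $n^{10}p^5q$. One has to check that each of the remaining terms is $\lesssim n^{10}p^5q$ when $p\gg n^{-2}$ and $q\simeq 1$. For example, $n^{16}p^{18}q^6\le n^{10}p^5$ amounts to $n^6p^{13}\lesssim 1$. This fails for large $p$, but then one must take the square root of $B_1$ against the variance. So I would instead compare $\sqrt{B_i}$ with $\V$ directly, term by term, just as the paper's conclusion implicitly does.

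Next we insert the variance lower bound of \Cref{variance_sepTriag}, namely $\V(K_{n,p})\gtrsim n^6p^3$ for $p\gg n^{-2}$ under $\limsup p<1$, so that $q\simeq1$. This gives
\[
d_K\lesssim \frac{n^5p^{5/2}}{n^6p^3}=\frac{1}{n\sqrt p}.
\]
Here the lack of the factor $(1-\sqrt2 p)^2$ in the triangulation variance is exactly what removes the critical window. Because $p\gg n^{-2}$ gives $n\sqrt p\to\infty$, the bound tends to zero, which proves the central limit theorem.

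The main obstacle is the bookkeeping that every $\sqrt{B_i}$ is indeed $\lesssim n^5p^{5/2}$ across the whole range $n^{-2}\ll p<1$, in particular the large-$p$ terms from $B_1$ and $B_2$, such as $\sqrt{n^{16}p^{18}}=n^8p^9$. For fixed $p$ this term exceeds $n^5$, so the naive comparison would fail. To handle this, I would first re-examine the overlap-case counting in $B_1$. The configurations with $g$ meeting both $e$ and $f$ number of order $n^4$, and with $\E[(\D_eK)^4]$ of order $n^8p^6$ they give $\sqrt{B_1}\lesssim n^8p^9$. Dividing this by $\V\simeq n^6p^3$ leaves $n^2p^6$, which does not vanish. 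I would therefore try to sharpen the second-order bound by replacing $W_{n,p}(2)$ by its mean $np^2$ only where it enters linearly, or else accept the paper's stated dominance claim as in \Cref{CLT_SymmetricEdge}. I expect the intended argument is simply to invoke that same computation, and to flag this step as the one needing care.
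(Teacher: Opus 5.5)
Your route is the paper's: normalize, feed the triangulation analogues of \Cref{l:W123_bound_sep} and \Cref{DDbounds_SEP} into \Cref{prop:2ndOrderPoincare}, let $B_3\lesssim n^{10}p^5q$ dominate, and divide by $\V(K_{n,p})\gtrsim n^6p^3$ from \Cref{variance_sepTriag}. As written, however, the proposal does not close. You assert that the part of $B_1$ with $g$ meeting both $e$ and $f$ gives $\sqrt{B_1}$ of order $n^8p^9$, hence $d_K\lesssim n^2p^6\not\to0$ for fixed $p$, and you leave this open.

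That obstacle is an arithmetic slip, not a real difficulty. The factors in $B_1$ are the \emph{square roots} $\sqrt{\E[(\D_eK)^2(\D_fK)^2]}$ and $\sqrt{\E[(\D_g\D_eK)^2(\D_g\D_fK)^2]}$. You inserted the Cauchy--Schwarz bounds $n^8p^6q^2$ and $n^4p^{12}q^4$ for the expectations themselves, which squares every summand. The intermediate display for $B_1$ in the proof of \Cref{CLT_SymmetricEdge} has the same slip, but the numerator in its conclusion already uses the correct value $n^{10}p^5q^3$. Set $a_e:=\E[(\D_eK)^4]^{1/4}\lesssim n^2p^{3/2}q^{1/2}$ and $b_{ge}:=\E[(\D_g\D_eK)^4]^{1/4}$. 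Then $b_{ge}\lesssim pq$ if $g\cap e=\emptyset$ and $b_{ge}\lesssim pq(1+np^2)$ if $|g\cap e|=1$. Each summand of $B_1$ is at most $a_ea_f\,b_{ge}b_{gf}$. Counting $n^6$, $n^5$, $n^4$ triples according to how many of $e,f$ meet $g$ gives
\[
B_1\lesssim n^{10}p^5q^3+n^{9}p^5q^3(1+np^2)+n^{8}p^5q^3(1+np^2)^2\lesssim n^{10}p^5q^3,
\]
using only $p\le 1$. In particular, the configurations you worried about contribute $\sqrt{\,\cdot\,}\lesssim n^5p^{9/2}$, and $n^5p^{9/2}/(n^6p^3)=p^{3/2}/n\to0$.

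The same bookkeeping handles the other terms:
\begin{align*}
B_2&\lesssim n^6p^3q^3+n^5p^3q^3(1+np^2)^2+n^4p^3q^3(1+np^2)^4,\\
B_4&\lesssim n^8p^4q^2+n^7p^4q^2(1+np^2)^2,\\
B_5&\lesssim n^4p^2q^2+n^3p^2q^2(1+np^2)^4.
\end{align*}
Every term is $\lesssim n^{10}p^5q$. The large-$p$ pieces are harmless because $p\le1$. The small-$p$ pieces are controlled because $p\gtrsim n^{-2}$; the tightest are $n^6p^3$, $n^8p^4$ and $n^4p^2$. Hence $\sum_iB_i\lesssim n^{10}p^5q$ and
\[
d_K\lesssim \frac{n^5p^{5/2}q^{1/2}}{n^6p^3}\lesssim \frac{1}{n\sqrt p},
\]
with no refinement of the second-order gradient bounds needed. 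Your proposed fix of replacing $W_{n,p}(2)$ by its mean would not be admissible anyway, since the $B_i$ require fourth moments rather than first moments.
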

\begin{proof}
    Since we have established the same bounds for both $\D_e K_{n,p}$ and $\D_e \D_f K_{n,p}$ as in the symmetric edge polytope case, we obtain identical bounds for the terms $B_1,\ldots,B_5$. This leads to
    $$
    \begin{aligned}
        d_K\left(\frac{K_{n,p}-\E[K_{n,p}]}{\sqrt{\V(K_{n,p})}}, N\right) 
        &\simeq \frac{n^{5}p^{5/2}\sqrt{q}}{\V(K_{n,p})}.
    \end{aligned}
    $$
    Applying the lower bound $\V(K_{n,p}) \gtrsim n^6p^3$ for $\limsup_{n \to \infty} p < 1$ yields
    $$
    \begin{aligned}
        d_K\left(\frac{K_{n,p}-\E[K_{n,p}]}{\sqrt{\V(K_{n,p})}}, N\right) 
        &\lesssim \frac{n^{5}p^{5/2}}{n^6p^3} \simeq \frac{1}{n\sqrt{p}},
    \end{aligned}
    $$
    which completes the proof.
\end{proof}

\subsubsection*{Acknowledgement}
This work has been supported by the German Research Foundation (DFG) via SPP 2458 \textit{Combinatorial Synergies} and via project number 547295909.

\bibliographystyle{plain}  
\bibliography{quellen}   

\end{document}